\newtheorem{theorem}{Theorem}[section]
\newtheorem{proposition}[theorem]{Proposition}
\newtheorem{corollary}[theorem]{Corollary}
\newtheorem{lemma}[theorem]{Lemma}
\newtheorem*{conjecture*}{Conjecture}
\theoremstyle{definition}
\theoremstyle{remark}
\newtheorem{remark}[theorem]{Remark}
\newcommand{\al}{\alpha}
\newcommand{\be}{\beta}
\newcommand{\de}{\delta}
\newcommand{\ep}{\varepsilon}
\newcommand{\ga}{\gamma}
\newcommand{\la}{\lambda}
\newcommand{\si}{\sigma}
\newcommand{\te}{\theta}
\newcommand{\vp}{\varphi}
\newcommand{\De}{\Delta}
\newcommand{\Ga}{\Gamma}
\newcommand{\La}{\Lambda}
\newcommand{\Om}{\Omega}
\def\CC{\mathbb{C}}
\def\NN{\mathbb{N}}
\def\RR{\mathbb{R}}
\def\ZZ{\mathbb{Z}}
\def\TT{\mathbb{T}}
\renewcommand\SS{\mathbb{S}}
\newcommand{\dds}{\frac{{\rm d}}{{\rm d}s}}
\newcommand{\cB}{{\mathcal B}}
\newcommand{\cC}{{\mathcal C}}
\newcommand{\cH}{{\mathcal H}}
\newcommand{\cI}{{\mathcal I}}
\newcommand{\cJ}{{\mathcal J}}
\newcommand{\cR}{{\mathcal R}}
\newcommand{\cX}{{\mathcal X}}
\newcommand{\cY}{{\mathcal Y}}
\newcommand\nuT{\widetilde{\nu}}
\newcommand\psiT{\widetilde{\Psi}}
\newcommand\LaT{\widetilde{\Lambda}}
\newcommand\phiT{\widetilde{\Phi}}
\newcommand{\pd}{\partial}
\newcommand\minus\backslash
\newcommand\lan\langle
\newcommand\ran\rangle
\newcommand{\sign}{\operatorname{sign}}
\newcommand{\supp}{\operatorname{supp}}
\newcommand{\Ker}{\operatorname{Ker}}
\DeclareMathOperator\Div{div}
\DeclareMathOperator\dist{dist}
\newcommand\MU{\mu_{0,2}}
\newcommand\LA{\la_{l,0}}
\newcommand{\Xl}{\mathcal{X}}
\newcommand{\Yl}{\mathcal{Y}}
\renewcommand\leq\leqslant
\renewcommand\geq\geqslant
\newcommand\ev{_{l}}
\newcommand\D{_{\mathrm{D}}}
\newcommand\DN{_{\mathrm{DN}}}
\newcommand\BOm{\overline\Om}
\numberwithin{equation}{section}
\newcommand\hrho{\widehat\rho}
\newcommand\hsi{\widehat\si}
\newcommand{\txi}{\widetilde{\xi}}
\newcommand{\teta}{\widetilde{\eta}}
\newcommand{\tk}{\tilde{k}}
\newcommand{\tde}{\tilde{\delta}}
\newcommand{\tsi}{\widetilde{\si}}
\newcommand{\tcJ}{\widetilde{\cJ}}
\begin{document}

\title[A Schiffer-type problem for annuli]{A Schiffer-type problem for annuli \\ with applications to stationary planar Euler flows}

\author[A. Enciso]{Alberto Enciso}
\address{  
\newline
\textbf{{\small Alberto Enciso}} 
\vspace{0.15cm}
\newline \indent Instituto de Ciencias Matem\'aticas, Consejo Superior de Investigaciones \newline \indent Cient\'\i ficas, 28049 Madrid, Spain}
\email{aenciso@icmat.es}

 \author[A. J. Fern\'andez]{Antonio J.\ Fern\'andez}
 \address{ \vspace{-0.4cm}
\newline 
\textbf{{\small Antonio J. Fern\'andez}} 
\vspace{0.15cm}
\newline \indent Departamento de Matem\'aticas, Universidad Aut\'onoma de Madrid, 28049 \newline \indent Madrid, Spain}
 \email{antonioj.fernandez@uam.es}

 \author[D. Ruiz]{David Ruiz}
 \address{ \vspace{-0.4cm}
\newline 
\textbf{{\small David Ruiz}} 
\vspace{0.15cm}
\newline \indent  
 IMAG, Departamento de An\'alisis Matem\'atico, Universidad de Granada, \newline \indent 18071 Granada, Spain}
 \email{daruiz@ugr.es}

  %    Information for second author
 \author[P. Sicbaldi]{Pieralberto Sicbaldi}
 \address{ \vspace{-0.4cm}
 \newline 
\textbf{{\small Pieralberto Sicbaldi}} 
\vspace{0.15cm}
\newline \indent IMAG, Departamento de An\'alisis Matem\'atico, Universidad de Granada, \newline \indent 18071 Granada, Spain 
\newline \indent Aix-Marseille Universit\'e - CNRS, Centrale Marseille - I2M, Marseille, \newline \indent France}
 \email{pieralberto@ugr.es}

\keywords{Schiffer conjecture, overdetermined problems, 2D Euler, stationary flows, bifurcation theory, eigenvalue problems.}

\subjclass[2020]{35N25, 35Q31, 35B32.}

%%    General info
%\subjclass[2010]{35B38, 58J05, 58K45}
%\date{\today}
%
%\keywords{ }
%
\begin{abstract}
\emph{If on a smooth bounded domain~$\Om\subset\RR^2$ there is a (nonconstant) Neumann Laplace eigenfunction $u$ that is locally constant on the boundary, must $\Om$ be a disk or an annulus?}\/ This question can be understood as a weaker analog of the well known Schiffer conjecture, in that the eigenfunction $u$ is allowed to take a different constant value on each connected component of $\pd \Om$ yet many of the known rigidity properties of the original problem are essentially preserved. Our main result provides a negative answer by constructing a family of nontrivial doubly connected domains~$\Om$ with the above property. Furthermore, our construction implies the existence of continuous, compactly supported stationary weak solutions to the 2D incompressible Euler equations which are not locally radial.  
\end{abstract}

\maketitle

\vspace{-0.5cm}

\section{Introduction}

One of the most intriguing problems in spectral geometry is the
so-called Schiffer conjecture, that dates back to the 1950s. In his 1982
list of open problems, S.T. Yau stated it (in the case $n = 2$) as
follows~\hspace{-0.01cm}\cite[Problem 80]{Yau}:

\begin{conjecture*}
If a nonconstant Neumann eigenfunction~$u$ of the Laplacian on a smooth bounded domain~$\Om\subset\RR^n$ is constant on the boundary~$\pd\Om$, then $u$ is radially symmetric and~$\Om$ is a ball.
\end{conjecture*}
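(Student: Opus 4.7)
The plan is to exploit the overdetermined Cauchy data that the hypothesis imposes on the eigenfunction: beyond $\Delta u + \la u = 0$ in $\Om$ and the Neumann condition $\pd_\nu u = 0$ on $\pd\Om$, the function $u$ takes a single constant value $c$ on $\pd\Om$. First I would observe that if $c=0$ then $u$ has zero Cauchy data on $\pd\Om$, so Holmgren's uniqueness theorem (applicable because the Helmholtz operator has analytic coefficients and $\pd\Om$ is noncharacteristic) would force $u\equiv 0$, contradicting nonconstancy. Hence $c\neq 0$, and extending $u-c$ by zero outside $\Om$ produces a compactly supported function $v$ on $\RR^n$ that satisfies
\[
(\Delta + \la)\, v = -\la c\,\mathbf{1}_\Om \quad \text{in } \cD'(\RR^n),
\]
with no boundary distributions appearing precisely because both $u-c$ and its normal derivative vanish on $\pd\Om$.

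Next I would take Fourier transforms to obtain the identity $(\la-|\xi|^2)\widehat v(\xi) = -\la c\,\widehat{\mathbf{1}}_\Om(\xi)$. Since $\widehat v$ is entire of exponential type by Paley--Wiener and $c\neq 0$, this forces $\widehat{\mathbf{1}}_\Om$ to vanish on the entire sphere $\{|\xi|=\sqrt\la\}$. This is the classical equivalence between Schiffer's problem and the Pompeiu problem, and rigidity of $\Om$ now reduces to showing that the vanishing locus of $\widehat{\mathbf{1}}_\Om$ contains a sphere only when $\Om$ is a ball. To attack this, I would use (a) the real-analyticity of $\pd\Om$, forced by the homogeneous Cauchy data for the analytic operator $\Delta+\la$ together with elliptic regularity, and (b) the stationary-phase asymptotics of $\widehat{\mathbf{1}}_\Om(\xi)$ along rays $\xi/|\xi|=\om$, which for large $|\xi|$ encode the principal curvatures of $\pd\Om$ at the two points whose outward normal is $\pm\om$.

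The strategy would then be to combine the exact vanishing on a single sphere with the curvature-type asymptotics at infinity. For the disk $B_R\subset\RR^2$ one has the explicit identity $\widehat{\mathbf{1}}_{B_R}(\xi)=(2\pi R/|\xi|)\,J_1(R|\xi|)$, whose zero set consists of concentric spheres determined by the zeros of the Bessel function $J_1$. One would then try to prove by a Paley--Wiener--Schwartz uniqueness argument, matched against the curvature asymptotics, that no other compactly supported indicator function admits a single spherical vanishing locus.

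The hard part, and the reason the conjecture has remained open since the 1950s, is precisely this last rigidity step. Classical overdetermined-problem techniques such as Serrin's moving plane method do not apply here: the extra datum is of Dirichlet type on an eigenvalue problem, where the maximum principle is unavailable and reflection arguments collapse. Moreover, the very existence result announced in the present paper suggests that any successful proof must use the connectedness of $\pd\Om$ in an essential way, since the analogous problem allowing a different constant on each component of $\pd\Om$ admits nontrivial solutions. I would therefore expect the decisive ingredient to be a topological or global-geometric input distinguishing simply connected domains from annular ones at the level of the Fourier-analytic rigidity above.
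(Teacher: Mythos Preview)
The statement you were asked to prove is the Schiffer \emph{conjecture}, not a theorem: the paper states it as an open problem and does not supply a proof. Indeed, the paper explicitly remarks that ``the Schiffer conjecture is famously open'' and only surveys partial rigidity results (Berenstein--Yang, Aviles, Deng, etc.) before moving on to its actual subject, the locally-constant variant on doubly connected domains.

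Your write-up is therefore not a proof at all, and you acknowledge as much in the final paragraphs. What you have produced is a correct and informed sketch of the classical reduction to the Pompeiu problem: the extension $v=(u-c)\mathbf{1}_\Om$ satisfies $(\Delta+\la)v=-\la c\,\mathbf{1}_\Om$ in $\cD'(\RR^n)$, whence $\widehat{\mathbf{1}}_\Om$ vanishes on the sphere $\{|\xi|^2=\la\}$. This is precisely Williams' equivalence, cited in the paper as~\cite{Williams76}. But the step you label ``the hard part''---deducing from a single spherical zero of $\widehat{\mathbf{1}}_\Om$ that $\Om$ is a ball---is the entire content of the conjecture, and no known combination of Paley--Wiener, stationary phase, or boundary analyticity closes it. Your observation that moving-plane methods fail here is correct, as is your remark that the paper's own flexibility result for annuli shows connectedness of $\pd\Om$ must enter essentially; but neither observation constitutes progress toward a proof.

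In short: there is nothing to compare against, because the paper contains no proof of this statement. Your proposal is an honest outline of why the problem is hard, not a proof.
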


This overdetermined problem is closely related to the {\it Pompeiu problem}~\cite{Pompeiu}, an open question in integral geometry with many applications in remote sensing, image recovery and tomography~\cite{Berenstein, WillmsGladwell, SSW}. The Pompeiu problem can be stated as the following inverse problem: Given a bounded domain $\Om\subset\RR^n$, is it possible to recover any continuous function~$f$ on~$\RR^n$ from knowledge of its integral over all the domains that are the image of~$\Om$ under a rigid motion? If this is the case, so that the only $f\in C(\RR^n)$ satisfying
\begin{equation}\label{E.Pompeiu}
\int_{\cR(\Om)}f(x)\, dx=0\,,
\end{equation}
for any rigid motion~$\cR$ is $f\equiv 0$,
 the domain~$\Om$ is said to have the {\em Pompeiu property}\/. Squares, polygons, convex domains with a corner, and ellipses have the Pompeiu property, and Chakalov was apparently the first to point out that balls fail to have the Pompeiu property~\cite{chakalov, L.BrownSchreiberTaylor, Zalcman}. In 1976, Williams proved \cite{Williams76} that a smooth bounded domain with boundary homeomorphic to a sphere fails to have the Pompeiu property if and only if it supports a nontrivial Neumann eigenfunction which is constant on $\partial \Omega$. Therefore, the Schiffer conjecture and the Pompeiu problem are equivalent for this class of domains\footnote{For domains with different topology the connection between the Schiffer and Pompeiu problems is less direct, and in fact one can construct many domains without the Pompeiu property using balls of different centers and radii.}.

Although the Schiffer conjecture is famously open, some partial results are available. It is known that $\Om$~must indeed be a ball under one of the following additional hypotheses:
\begin{enumerate}
\item There exists an infinite sequence of orthogonal Neumann eigenfunctions that are constant on $ \partial \Om$, which is connected \cite{Berenstein, BerensteinYang}.
\item The third order interior normal derivative of $u$ is constant on $\pd\Om$, which is connected \cite{Liu}.
\end{enumerate}
In dimension 2, some further results are available, and the conjecture has been shown to be true in some other special cases:
\begin{enumerate}
\setcounter{enumi}{2}
\item When $\Omega$ is simply connected and $u$ has no saddle points in the interior of $\Om$ \cite{WillmsGladwell}.
\item If $\Om$ is simply connected and the eigenvalue~$\mu$ is among the seven lowest Neumann eigenvalues of the domain \cite{aviles, Deng}.
\item If the fourth or fifth order interior normal derivative of $u$ is constant on $\pd\Om$ \cite{KawohlLucia}.
\end{enumerate}
It is also known that the boundary of any reasonably smooth domain $\Om\subset\RR^n$ with the property stated in the Schiffer conjecture must be analytic as a consequence of a result of Kinderlehrer and Nirenberg~\cite{KinderlehrerNirenberg} on the regularity of free boundaries. 

%Moreover, in the statement of the conjecture it is enough to suppose that $\pd\Om$ is Lipschitz continuous, because in this case the existence of the desired Neumann eigenfunction implies that $\pd\Om$ is smooth and then real-analytic, see \cite{Caffarelli, Williams81}.

If one considers domains with disconnected boundary, it is natural to wonder what happens if one relaxes the hypotheses in the Schiffer conjecture by allowing the eigenfunction to be {\em locally constant}\/ on the boundary, that is, constant on each connected component of~$\pd\Om$. Of course, the radial Neumann eigenfunctions of a ball or an annulus are locally constant on the boundary, so the natural question in this case is whether $\Om$ must be necessarily a ball or an annulus.

We emphasize that this question shares many features with the Schiffer conjecture. First, essentially the same arguments show that $\pd\Om$ must be analytic. Moreover, in the spirit of~\cite{Berenstein, BerensteinYang}, we show that if there exists an infinite sequence of orthogonal eigenfunctions that are locally constant on the boundary of $\Om \subset \RR^2$, then $\Omega$ must be a disk or an annulus. Furthermore, we prove that if the Neumann eigenvalue is sufficiently low, $\Omega$~must be a disk or an annulus, which is a result along the lines of~\cite{aviles, Deng}. These results are interesting in their own right, as they are nontrivial adaptations of the arguments used for the Schiffer conjecture. Precise statements and proofs of these rigidity results can be found in Section~\ref{S.Rigidity}.

Our main objective in this paper is to give a negative answer to this question. Indeed, we can prove the following: 
\begin{theorem}\label{T.main}
There exist parametric families of doubly connected bounded domains $\Om\subset\RR^2$ such that the Neumann eigenvalue problem
$$
\Delta u + \mu u = 0\quad in\ \Omega\,, \qquad \pd_\nu u = 0 \quad on \ \partial\Omega\,,
$$
%$$ \left \{ \begin{array}{ll} \Delta u + \mu u =0, & x \in \Omega, \\ \nabla u=0, & x \in \partial \Om,\end{array} \right.$$
admits, for some $\mu \in \RR$, a non-radial eigenfunction that is locally constant on the boundary (i.e., $\nabla u \equiv 0$ on~$\pd\Om$).
More precisely,  for any large enough integer $l $ and for all $s$ in a small neighborhood of $0$, the family of domains $\Omega\equiv \Om_{l,s}$ is given in polar coordinates by 
	\begin{equation}\label{E.Omthm}
	\Om:=\big\{(r,\te)\in\RR^+\times \TT: a_l+s\, b_{l,s}(\te)<r<1+s\, B_{l,s}(\te)\big\}\,,
	\end{equation}
where $b_{l,s},B_{l,s}$ are analytic functions on the circle $\TT:=\RR/2\pi\ZZ$ of the form
\[
b_{l,s}(\te)= \alpha_l\cos l\te + o(1)\,,\qquad B_{l,s}(\te)= \be_l \cos l\te + o(1)\,,
\]
where $a_l\in(0,1)$, $\al_l$ and $\beta_l$ are nonzero constants, and where the $o(1)$ terms tend to~$0$ as $s\to0$.
\end{theorem}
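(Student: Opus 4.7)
The plan is a local bifurcation analysis starting from the one-parameter family of round annuli $A_a := \{x\in\RR^2 : a<|x|<1\}$, $a\in(0,1)$. Each $A_a$ admits a discrete sequence of radial Neumann eigenfunctions $U_a(r)$ with eigenvalues $\mu(a)$, all of which are trivially locally constant on $\pd A_a$; the goal is to show that as $a$ crosses certain special values $a_l$, a nontrivial nonradial branch of solutions to the overdetermined problem emerges. Nearby domains are parametrized as $\Omega(b,B) := \{a+b(\theta)<r<1+B(\theta)\}$, with $b,B$ in the Banach subspace $X\subset C^{2+\alpha}(\TT)$ of even, $2\pi/l$-periodic functions; fixing this dihedral symmetry kills the rotational degeneracy and matches the dimensions on the two sides of the bifurcation equation.

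I would then reformulate the overdetermined problem as a nonlinear operator equation. Given $(b,B,\mu)$ and a scalar $c_1$, let $u = u(b,B,\mu,c_1)$ be the (generically unique) solution of the Dirichlet problem $\Delta u+\mu u=0$ in $\Omega(b,B)$ with $u\equiv c_1$ on the inner boundary and $u\equiv 1$ on the outer. The overdetermined condition becomes
\[
\Phi(b,B,\mu,c_1) := \bigl(\pd_\nu u|_{r=a+b(\theta)},\; \pd_\nu u|_{r=1+B(\theta)}\bigr) = 0,
\]
viewed as an analytic map $X\times X\times\RR^2\to Y\times Y$, where $Y\subset C^{1+\alpha}(\TT)$ is the analogous symmetric subspace. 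At $(b,B)=(0,0)$, the $\theta$-averaged part of $\Phi=0$ is exactly the radial Neumann problem on $A_a$ and is solved by $\mu=\mu(a)$ and $c_1=U_a(a)/U_a(1)$, so it is the orthogonal complement (the genuinely oscillatory Fourier modes) that drives the bifurcation.

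The linearization $D\Phi$ at this trivial solution is then computed by separation of variables. A test perturbation $(\dot b,\dot B) = (\alpha,\beta)\cos(kl\theta)$ induces a first-order correction $\dot u = v(r)\cos(kl\theta)$, where $v$ solves the Bessel equation $v''+r^{-1}v'-(kl)^2 r^{-2}v+\mu(a)v=0$; expanding the Dirichlet and Neumann boundary conditions to first order, and using $U_a'(a)=U_a'(1)=0$, reduces them to the four conditions $v(a)=v(1)=0$ together with $v'(a)=-\alpha U_a''(a)$, $v'(1)=-\beta U_a''(1)$. Hence the $k$-th Fourier block of $D\Phi$ has nontrivial kernel precisely when $\mu(a)$ is a Dirichlet eigenvalue of the order-$kl$ Bessel operator on $(a,1)$; if this occurs only for $k=1$, the kernel is one-dimensional and the coefficients $\al_l,\be_l$ in the theorem are read off from $v'(a)/U_a''(a)$ and $v'(1)/U_a''(1)$.

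The bulk of the work, and the main obstacle, is to prove that for every sufficiently large $l$ there exists $a_l\in(0,1)$ at which this $k=1$ determinant vanishes while remaining nonzero for all $k\geq 2$, and that the crossing is transverse in $a$. This requires sharp asymptotics for Bessel functions of large order together with a monotonicity argument comparing the Neumann curve $a\mapsto\mu(a)$ with the Bessel Dirichlet eigenvalue curves of orders $l,2l,3l,\dots$ on $(a,1)$. Once the $a_l$ are located and the Crandall–Rabinowitz transversality condition is verified, the bifurcation theorem (with $a$ as the bifurcation parameter and $(\mu,c_1)$ adjusted implicitly via the invertible zero-mode block) produces, for each large $l$ and each $s$ in a neighborhood of $0$, an analytic family of nontrivial solutions whose inner and outer boundary profiles $b_{l,s},B_{l,s}$ admit the expansions stated in the theorem.
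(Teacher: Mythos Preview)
Your overall strategy---bifurcation from round annuli with $a$ as parameter, dihedral symmetry to force a simple kernel, and the identification of the key spectral crossing between the radial Neumann eigenvalue and the first $l$-mode Dirichlet eigenvalue, together with non-resonance for $k\ge2$ and transversality in $a$---matches the paper's exactly, and your reading of the kernel direction (and hence of $\alpha_l,\beta_l$) from $v'(a)/U_a''(a)$ and $v'(1)/U_a''(1)$ is correct. The gap is in the functional-analytic formulation, and it is not merely cosmetic.

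Your own linearization computation shows that on every nonzero Fourier mode the Dirichlet correction $\dot u$ vanishes (because $U_a'(a)=U_a'(1)=0$ kills the boundary data), so $D\Phi$ reduces to the multiplication $(\dot b,\dot B)\mapsto(-U_a''(a)\dot b,\,U_a''(1)\dot B)$. This sends $C^{2+\alpha}$ into $C^{2+\alpha}$; viewed as a map into your target $Y=C^{1+\alpha}$ it is compact with non-closed range, hence \emph{not Fredholm}. You cannot simply take $Y=C^{2+\alpha}$ instead, because the full nonlinear $\Phi$ only lands in $C^{1+\alpha}$: with a $C^{2+\alpha}$ boundary Schauder gives $u\in C^{2+\alpha}$ and no better, so $\partial_\nu u\in C^{1+\alpha}$. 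This loss-of-derivatives mismatch between the nonlinear map and its linearization is precisely why the paper, following Fall--Minlend--Weth, abandons the boundary-map formulation: it pulls the entire PDE back to a fixed annulus, encodes domain and eigenfunction in a single unknown $v$ lying in the anisotropic space $\mathcal X=\{v\in C^{2,\alpha}:\partial_R v\in C^{2,\alpha},\ v|_{\partial\Omega_{1/2}}=0\}$, and reads $(b_v,B_v)$ off from the trace of $\partial_R v$. The linearized operator is then just the pulled-back $\Delta+\mu_{0,2}(a)$, which is shown to be Fredholm of index zero between $\mathcal X$ and a matching target $\mathcal Y$. A second, related difficulty with your setup is that at $a=a_l$ the number $\mu(a_l)$ is by construction a Dirichlet eigenvalue of $A_{a_l}$ in the symmetric space, so the Dirichlet problem you use to \emph{define} $\Phi$ is not uniquely solvable at the bifurcation point itself.

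A smaller point: you must commit to the \emph{second} nontrivial radial Neumann eigenvalue $\mu_{0,2}(a)$. The paper uses the identity $\mu_{0,n}=\lambda_{1,n-1}$ together with comparison to Bessel zeros of the disk to show that the crossing $\mu_{0,2}(a)=\lambda_{l,0}(a)$ occurs for $l\ge4$, whereas with $\mu_{0,1}=\lambda_{1,0}$ no such crossing is possible; this choice is forced by the rigidity results that rule out bifurcation at low eigenvalues.
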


 We refer to Theorem~\ref{T.mainDetails} in the main text for a more precise statement.

%\subsection*{Connection with a Pompeiu-type problem} As mentioned before, the Schiffer conjecture and the Pompeiu problem are closely related. Specifically, Williams proved~\cite{Williams76} that a smooth simply-connected domain~$\Om$ fails to have the Pompeiu property if and only if there is a Neumann eigenfunction of~$\Om$ that is constant on~$\pd\Om$. When the domain is not simply connected, the connection is less direct (see e.g.~\cite{Williams76} and references therein). For instance, if $R_1>R_2$ are zeros of the Bessel function $J_{n/2}$, it is easy to show that a family of examples of non-spherically symmetric domains~$\Om$ without the Pompeiu property is the disk $B(0,R_1)$ centered at the origin and of radius~$R_1$ with (one or several) disjoint closed balls of radius $R_2$ deleted, provided that the latter are contained in~$B(0,R_1)$. A more complicated example was provided by Federbush. However, we are not aware of any domains without the Pompeiu property that are not constructed using balls of radii given by zeros of~$J_{n/2}$. 

 In view of the connection between the Schiffer conjecture and the Pompeiu problem, it stands to reason that the domains of the main theorem should be connected with an integral identity somewhat reminiscent of the Pompeiu property. Specifically, one can prove the following corollary of our main result, which is an analog of the identity~\eqref{E.Pompeiu} in which one replaces the indicator function~$\mathbbm 1_\Om$ implicit in the integral by a linear combination of indicator functions:

\begin{corollary}\label{C.Pompeiu}
Let ~$\Om$ be a domain as in Theorem~\ref{T.main} and let $\Om':=\{r<a_l+ s\, b_{l,s}(\te)\}$ denote the bounded component of $\RR^2\backslash\BOm$. Then there exists a positive constant $c$ and a nonzero function $f \in C^{\infty}(\RR^2)$ such that
\[
\int_{\cR(\Om)}f\, dx - c\int_{\cR(\Om')}f\, dx=0\,,
\]
for any rigid motion~$\cR$.	
\end{corollary}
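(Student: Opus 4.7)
The plan is to adapt the classical Williams--Schiffer--Pompeiu strategy to our doubly connected setting, where the eigenfunction is only locally constant on the boundary rather than globally constant.

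Let $u$ be the Neumann eigenfunction on $\Om$ provided by Theorem~\ref{T.main}, with eigenvalue $\mu$, satisfying $\pd_\nu u\equiv 0$ and taking constant values $c_1$ on the inner component $\pd\Om'$ and $c_2$ on the outer component of $\pd\Om$. For any $C^2$ function $\phi$ on $\RR^2$ satisfying the Helmholtz equation $\Delta\phi+\mu\phi=0$ globally, Green's second identity on $\Om$ gives
\[
0=\int_\Om(\phi\,\Delta u - u\,\Delta\phi)\,dx = \int_{\pd\Om} u\,\pd_\nu\phi\,dS,
\]
since $\pd_\nu u\equiv 0$. Splitting the boundary integral into its two components, using the constancy of $u$ on each, and invoking the divergence theorem separately on $\Om'$ and on the simply connected region $\Om\cup\Om'$ (converting the boundary flux of $\phi$ to $-\mu$ times the interior integral via $\Delta\phi=-\mu\phi$), one obtains
\[
c_2\int_{\Om}\phi\,dx \;=\; (c_1-c_2)\int_{\Om'}\phi\,dx,
\]
valid for every global Helmholtz solution~$\phi$.

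Next, I would choose $f(x):=J_0(\sqrt{\mu}\,|x|)$, which is smooth on $\RR^2$, not identically zero, and satisfies $\Delta f+\mu f=0$ on all of $\RR^2$. Since the Laplacian commutes with rigid motions, $f\circ\cR$ is again a global Helmholtz solution for any rigid motion $\cR$. Applying the identity above with $\phi:=f\circ\cR$ and changing variables $x=\cR(y)$ in each integral yields
\[
c_2\int_{\cR(\Om)}f\,dx \;=\; (c_1-c_2)\int_{\cR(\Om')}f\,dx,
\]
and the corollary follows with $c:=(c_1-c_2)/c_2$.

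The only subtle point I anticipate is showing that $c_2\neq 0$ and that the resulting $c$ is positive. Non-vanishing of $c_2$ should follow from a unique-continuation argument combined with a one-domain consistency check for $\Om'$: if both $c_1$ and $c_2$ vanished, $u$ would have trivial Cauchy data on $\pd\Om$ and hence be identically zero; if only $c_2=0$, the identity above would force the nearly circular $\Om'$ to satisfy the original Schiffer overdetermination at the eigenvalue~$\mu$, which is excluded for small~$|s|$ by the perturbative nature of the construction in Theorem~\ref{T.main}. Positivity of $c$ then reduces, by continuity in the bifurcation parameter~$s$, to a sign computation at $s=0$, where $u$ is an explicit linear combination of $J_0(\sqrt\mu\,r)$ and $Y_0(\sqrt\mu\,r)$ on the annulus $\{a_l<r<1\}$ with vanishing radial derivative at both endpoints; this sign check, using standard Bessel asymptotics (possibly after restricting to an appropriate sub-branch of the bifurcation for which the ratio $(c_1-c_2)/c_2$ comes out positive), is the main but essentially routine remaining obstacle.
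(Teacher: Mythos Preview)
Your argument is correct and follows essentially the same strategy as the paper: exploit that global Helmholtz solutions are preserved under rigid motions, and use the locally constant Cauchy data of $u$ on $\pd\Om$ to produce the integral identity. The only mechanical difference is that the paper packages the computation by building a compactly supported $C^{1,1}$ function $w$ on $\RR^2$ (equal to an affine function of $u$ in $\Om$, constant in $\Om'$, and zero outside) satisfying $\De w+\mu w=\mathbbm 1_\Om - c\,\mathbbm 1_{\Om'}$ distributionally, and then pairs this with $f\circ\cR^{-1}$; your direct use of Green's identity on $\Om$ is equivalent and arguably more elementary.

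On the positivity of $c$, the paper dispatches this in one line: the radial eigenfunction at $s=0$ is a linear combination of $J_0(\sqrt\mu\,r)$ and $Y_0(\sqrt\mu\,r)$, whose successive local extrema have strictly decreasing absolute values, so the inner boundary value has larger modulus than the outer one; continuity in $s$ then finishes the job. Your reduction to $s=0$ is exactly right, and no restriction to a sub-branch is needed. Likewise, non-vanishing of the outer boundary value follows already at $s=0$ (if $\psi_{0,2}$ vanished at an endpoint together with its derivative, it would be identically zero), so your more elaborate unique-continuation/Schiffer contingency is unnecessary.
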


\subsection*{Non-radial compactly supported solutions to the 2D Euler equations}
Theorem \ref{T.main} implies the existence of a special type of stationary 2D Euler flows, which was the original motivation of our work. Let us now describe this connection in detail.

It is well known that if a scalar function $w$ satisfies a semilinear equation of the form 
\begin{equation}\label{E.Deg}
\De w+ g(w)=0 \quad \textup{in } \RR^2\,,
\end{equation}
for some function~$g$, then the velocity field and pressure function given by
\[
v:=\left(\frac{\pd w}{\pd x_2},-\frac{\pd w}{\pd x_1}\right)\,,\qquad p:= -\frac12|\nabla w|^2- \int_0^w g(s) \, ds\,,
\]
define a stationary solution to the Euler equations:
\begin{equation}
\label{euler} v\cdot \nabla v+\nabla p=0\quad\textup{ and } \quad \Div v=0 \quad \textup{ in } \RR^2\,.
\end{equation}

Any function $w$ that is radial and compactly supported gives rise to a compactly supported stationary Euler flow in two dimensions, even if it does not solve a semilinear elliptic equation as above. The stationary solutions that are obtained by patching together radial solutions with disjoint supports are referred to as \textit{locally radial}.  For a detailed bibliography on the rigidity and flexibility properties of stationary Euler flows in two dimensions and their applications, we refer to the recent papers~\cite{JGS, gomez}.

Continuous compactly supported stationary Euler flows in two dimensions that are not locally radial were constructed just very recently~\cite{JGS}. They are of vortex patch type and the velocity field is piecewise smooth but not differentiable\footnote{Therefore, the Euler equations are to be understood in the weak sense. Details are given in ~Section~\ref{S.corollaries}.}. The proof of this result is hard: it involves several clever observations and a Nash--Moser iteration scheme. Furthermore, as emphasized in~\cite{JGS}, these solutions cannot be obtained using an equation of the form~\eqref{E.Deg}. In the setting of rough solutions~\cite{Chof}, a wealth of compactly supported stationary weak solutions to the Euler equations in dimensions~2 and higher can be constructed using convex integration methods, but in this case the velocity field is only in~$L^\infty$. Also, a straightforward consequence of the recent paper~\cite{Ruiz-arxiv} is the existence of discontinuous but piecewise smooth stationary Euler flows that are not locally radial. On the other hand, the existence of non-radial stationary planar Euler flows is constrained by several recent rigidity results~\cite{Hamel, gomez, R}. The existence of $C^1$~compactly supported solutions of~\eqref{euler} which are not locally radial is not yet known; if the support condition is relaxed to merely having finite energy, the existence follows directly from~\cite{Musso}.

One can readily use Theorem~\ref{T.main} to construct families of continuous stationary planar Euler flows with compact support that are not locally radial. These solutions, which are not of patch type, are analytic in the interior of their support but fail to be differentiable across its boundary.

\begin{theorem}\label{T.Euler}
	Let $\Om$ be a domain as in Theorem~\ref{T.main}. Then there exists a continuous, compactly supported stationary solution $(v,p)$ to the incompressible Euler equations on the plane such that $\supp v=\BOm$. Furthermore, $(v,p)$ is analytic in~$\Om$.
\end{theorem}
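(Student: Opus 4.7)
The plan is to use the eigenfunction~$u$ from Theorem~\ref{T.main} as a stream function on~$\Om$, following the recipe recalled in the paragraph containing~\eqref{E.Deg}, and to extend it by suitable constants to the two components of $\RR^2\setminus\BOm$. Let $c_1$ and $c_2$ denote the (constant) values that~$u$ takes on the outer and inner components of~$\pd\Om$, respectively, and let $\Om'$ stand for the bounded hole as in Corollary~\ref{C.Pompeiu}. I would set
\[
w(x):=\begin{cases} u(x)-c_1 & \text{if } x\in\BOm,\\ 0 & \text{if } x\in\RR^2\setminus(\BOm\cup\Om'),\\ c_2-c_1 & \text{if } x\in\Om',\end{cases}
\]
and take $v:=(\pd_{x_2}w,-\pd_{x_1}w)$.

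Because $u$ is locally constant on~$\pd\Om$ and satisfies $\pd_\nu u=0$ there, the full gradient~$\nabla u$ vanishes on~$\pd\Om$, so $v$ is continuous on all of~$\RR^2$ and identically zero off~$\Om$. Since $u$ is real-analytic in~$\Om$ (as a Laplace eigenfunction) and nonconstant, its critical set is nowhere dense, and hence $\supp v=\BOm$. Next, the pressure would be defined piecewise: on~$\Om$ set $p:=-\tfrac12|\nabla u|^2-\tfrac{\mu}{2}(u^2-c_1^2)$, and set $p\equiv 0$ on the unbounded outer region and $p\equiv-\tfrac{\mu}{2}(c_2^2-c_1^2)$ on~$\Om'$. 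Evaluating the $\Om$-side expression on each boundary component---using once more that $\nabla u=0$ and $u$ is constant on each component of $\pd\Om$---shows that $p$ is continuous across~$\pd\Om$.

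Finally, I would verify the weak form of~\eqref{euler}. On~$\Om$ the equations hold pointwise via the semilinear device recalled in the text, applied with $g(w):=\mu(w+c_1)$, since then $\De w+g(w)=\De u+\mu u=0$; on the other two components, $v\equiv 0$ and $p$ is constant, so the equations hold trivially. Testing the momentum equation against an arbitrary smooth compactly supported vector field~$\phi$ and integrating by parts on each of the three regions, the bulk contributions vanish and one is left with surface integrals on~$\pd\Om$ of the form $\int_{\pd\Om}(v(v\cdot\nu)+[p]\,\nu)\cdot\phi\,d\si$; these vanish because $v\equiv 0$ on $\pd\Om$ and the jump~$[p]$ is zero by construction. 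Analyticity of $(v,p)$ on~$\Om$ is immediate from the analytic regularity of Laplace eigenfunctions. The single genuine obstacle is the continuity of the pressure across~$\pd\Om$, and this is precisely what the overdetermined boundary condition supplied by Theorem~\ref{T.main} delivers, so the remainder of the argument reduces to a direct verification.
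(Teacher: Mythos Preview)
Your proposal is correct and follows essentially the same route as the paper: define $v$ as the perpendicular gradient of $u$ (extended by zero), choose the pressure $p=-\tfrac12|\nabla u|^2-\tfrac{\mu}{2}(u^2-c_1^2)$ on $\Om$ with matching constants outside, and verify the weak formulation via integration by parts using that $\nabla u|_{\pd\Om}=0$. Your handling of the boundary terms---through the vanishing of $v$ on $\pd\Om$ and the zero jump $[p]=0$ guaranteed by the overdetermined condition---is in fact slightly more explicit than the paper's somewhat compressed computation, and your justification of $\supp v=\BOm$ via analyticity of $u$ fills in a detail the paper leaves implicit.
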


We believe that the same strategy we use in this paper can be generalized for some nonlinear functions $g$ in Equation \eqref{E.Deg}, which can be engineered to produce interesting nontrivial solutions to the incompressible Euler equations. We will explore this direction in the future.

We conclude this subsection with two side remarks. First, the Schiffer conjecture had already appeared in connection with stationary Euler flows in an unpublished note of Kelliher~\cite{Kelli}, where he showed that a bounded domain $\Om\subset\RR^2$ admits a (nonconstant) Neumann eigenfunction that is constant on the boundary if and only if there is a stationary solution of the incompressible Euler equations on~$\Om$ that is also an eigenfunction of the Stokes operator on this domain, with the no-slip boundary condition. 

Second, recall that compactly supported 3D Euler flows are very different from their two-dimensional counterparts. In three dimensions, the existence of smooth stationary solutions of compact support was  established only recently in a groundbreaking paper by Gavrilov~\cite{Gavrilov}. These solutions have been revisited and put
in a broader context by Constantin, La and Vicol~\cite{CV} using the Grad--Shafranov formulation, which allowed them to obtain nontrivial applications
to other equations of fluid mechanics. Closer in spirit to the present paper is the construction of large families of discontinuous but piecewise smooth axisymmetric stationary Euler flows via overdetermined boundary value problems which was carried out in~\cite{ARMA}. The argument there does not rely on a bifurcation argument, but on an existence result for overdetermined problems where the existence of solutions (radial or not) is not known a priori, roughly along the lines of~\cite{PS:AIF, DS:DCDS, overdet,APDE}.

\subsection*{Strategy of the proof of Theorem \ref{T.main}}

Theorem~\ref{T.main} relies on a local bifurcation argument to construct nonradial solutions to the \textit{overdetermined} problem
\begin{equation} \label{problem}
\Delta u + \mu u = 0\quad \textup{in}\ \Omega\,, \quad \nabla u = 0 \quad \textup{on} \ \partial\Omega\,.
\end{equation}
%}
%
%nonradial Laplace eigenfunctions $u$ which satisfy the {\em overdetermined}\/ boundary condition $\nabla u=0$ on the boundary of some domain $\Om$. 
We start off with a parametrized family of radial Neumann eigenfunctions in annuli $\Om_a= \{a<r<1\}$, which are clearly solutions to \eqref{problem} when $\Om = \Om_a$. One should think about them as a family of ``trivial'' solutions. The gist of the proof is to show that there exists a family of ``nontrivial'' (i.e., nonradial)  solutions $u_s$ and domains $\Om_s$ satisfying \eqref{problem}, which branch out of the family of trivial solutions at a certain value~$a_*$ of the parameter $a$: these are the eigenfunctions and domains described in the statement of our main theorem. 

A bifurcation theorem, such as the Crandall--Rabinowitz theorem that we use in this paper, must specify a set of sufficient conditions which ensures that a family of nontrivial solutions does exist. Although checking that these conditions indeed hold is often hard, this kind of arguments have been successfully used to show that many overdetermined problems are {\em flexible}, in that they admit ``nontrivial'' solutions. For instance, solutions to overdetermined problems bifurcating from cylinders and strips, and from strips around the equator of the sphere have been found several times in the literature \cite{FMW2, FMW3, RSW1, SS12, S10}. %From these results it becomes apparent that these settings have a certain degree of flexibility for finding nontrivial solutions.

%These kind of solutions were first constructed on the cylinder by one of us in [9], and extended to other problems on the sphere and the cylinder in [3, 4], showing, for example, that Serrin’s symmetry results do not hold in these settings.

The essential property of these problems, however, is that being able to verify the hypotheses of a bifurcation theorem is not merely a technical problem: in fact, many important classes of overdetermined problems are known to be \textit{rigid} in the sense that there are no nontrivial solutions, see for instance to \cite{Sirakov,Serrin,Reichel}. The deeply geometrical interplay between rigidity and flexibility is perhaps the most distinctive feature of the study of overdetermined problems, and underlies Schiffer's conjecture about Neumann eigenfunctions. As a prime example of this dychotomy, note that while Serrin's symmetry result~\cite{Serrin} ensures that the only positive solutions to many overdetermined problems on a bounded domain of $\RR^n$ are radially symmetric, nontrivial solutions do bifurcate from radially symmetric ones in the case of periodic unbounded domains~\cite{FMW2, FMW3}.
%	\st{To put it differently, overdetermined problems are typically rigid on bounded domains on the plane (or on the upper half-sphere) but they can exhibit flexibility on unbounded periodic domains such us the cylinder (or on spherical domains which contain the equator).  }

From a conceptual point of view, our key contribution in this paper is to identify a novel geometric setting in which \eqref{problem} exhibits some flexibility: annular domains in the plane. This is the first flexibility result for an eigenvalue problem under overdetermined boundary conditions in bounded domains of the Euclidean space.  Looking for nontrivial solutions in this setting involves a leap of faith: this kind of domains were completely uncharted territory in the context of Schiffer-type problems and, contrarily to the kind of domains considered in~\cite{Fall-Minlend-Weth-main}, there were no indications that flexibility was to be expected. What is known, in fact, is that annular domains satisfy some fundamental partial rigidity properties~\cite{Reichel} that do not hold in the case of domains in the cylinder or large domains in the sphere. 

Although we eventually show that the new geometric setting of planar annuli has good flexibility properties, the strong rigidity properties it nonetheless exhibits turn out to be very important too. The ``partial rigidity'' of the annular domains we take in our paper is reflected in the strong rigidity properties of Neumann eigenfunctions on bounded Euclidean domains that are locally constant on the boundary: morally, this is why the problem we solve has the same known rigidity properties as the Schiffer conjecture, as we prove in Section~\ref{S.Rigidity} of our paper.

From a technical point of view, these rigidity properties are reflected in a number of new difficulties that arise naturally in our setting. To explain this fact, let us next sketch the proof of Theorem \ref{T.main}. As already mentioned, Theorem \ref{T.main} relies on a local bifurcation argument. To implement the bifurcation argument, we pass via a diffeomorphism to a fixed domain  $\{\frac12 < r < 1\}$, so that  our main unknown is a two-variable function $v(r,\te)$ defined on that annulus. This function encodes the shape of the domain $\Om$ and also of the solution $u$. To control the regularity of $v$, inspired by \cite{Fall-Minlend-Weth-main}, we use anisotropic H\"older spaces, since the functions arising are one derivative smoother in $r$ than they are in $\te$. Specifically, we consider suitable subspaces of the anisotropic space 
$$
\cX^{2,\al} := \big\{ u \in C^{2,\al}(\overline{\Om}_\frac12): \pd_r u \in C^{2,\al}(\overline{\Om}_\frac12) \big\}\,.
$$

Fall, Minlend and Weth introduced this space to  avoid a loss of derivatives phenomenon that appears when using a more standard functional setting. In this way, they construct the first examples of domains in the sphere that admit a Neumann eigenfunction which is constant on the boundary and whose boundary has nonconstant principal curvature, disproving a conjecture by Souam \cite{Souam}. In~\cite{Fall-Minlend-Weth-main}, a fundamental idea is to consider the overdetermined problem on domains that are invariant with respect to a reflection symmetry (either on the cylinder or on the sphere, in the second case domains that are symmetric across the equator). Conceptually, the reason is that this identifies a geometric setting in which one expects flexibility, which eventually enables them to bifurcate from any nonconstant radial Neumann eigenfunction. From a technical point of view, the reflection symmetry allows to reduce the analysis on the two connected components of the boundary to just one, and this reduction is  essential in their analysis.

In our geometric setting, we aim to prove bifurcation from an annulus in the plane, so there is no way to connect the two boundary components with a reflection. This qualitative change in the picture has the key technical effect that both connected components of the boundary are independent unknowns. This completely changes the setting of the problem, and its consequences affect the whole analysis. 

A first essential effect, which reflects the aforementioned additional rigidity of the problem, is that symmetry results imply there is no way one can bifurcate from any nonconstant Neumann eigenfunction in our setting, contrary to what happens in~\cite{Fall-Minlend-Weth-main}. In the bifurcation argument, this is seen because bifurcation can only occur when a suitable radial Neumann eigenvalue and a suitable nonradial Dirichlet eigenvalue coincide. In our case, this coincidence happens if the radial Neumann eigenvalue is the second (but not the first) and if the Fourier mode~$l$ of the Dirichlet eigenvalue satisfies $l \geq 4$.  This coincidence is proved in Proposition \ref{P.cross1}, which also ensures that there are no resonances with other Dirichlet eigenvalues, as required in the Crandall--Rabinowitz theorem. The inherent rigidity of the problem underlying this difficulty is addressed in Theorem \ref{T.Aviles}.

On the other hand, as can be expected in any application of the Crandall--Rabinowitz theorem, the hardest technicalities arise in the proof of the so-called ``transversality condition''. In our case, it is obtained as a corollary of Proposition \ref{P.cross2}, which is established for sufficiently large $l$. For completeness, in Appendix \ref{A.transversality}, we also check this condition numerically in the particular case $l = 4$. The proof of Proposition \ref{P.cross2} boils down to the asymptotic analysis of Dirichlet and Neumann eigenfunctions and eigenvalues on annuli, in presence of a large parameter (the angular mode of certain eigenfunction) and a small one (the thickness of the annulus). This analysis is rather subtle because these parameters are not independent, but related by the requirement that a certain pair of eigenvalues must coincide. To prove the required second order expansion of both eigenfunctions of eigenvalues, we combine ODEs methods with Fourier analysis. Actually, a considerable part of our paper is devoted to proving Proposition \ref{P.cross2}. One should note that using $\mathbb{Z}_l$-symmetric eigenfunctions (which allows us to only consider eigenfunctions involving Fourier modes of the form $\cos(ml\theta)$) is a useful technical trick, which has been used several times in the literature, but it does not conceptually change the geometric setting we are dealing with.

Once the transversal crossing of the eigenvalue branches has been established in Propositions \ref{P.cross1} and \ref{P.cross2}, and using the functional framework described above and presented in Section \ref{S.deform}, the proof of Theorem \ref{T.main} follows from the Crandall--Rabinowitz theorem, as we show in Section \ref{S.bifurcation} (Proposition \ref{P.CR} and Theorem \ref{T.mainDetails}).

\subsection*{Organization of the paper}

In Section~\ref{S.eigen} we prove the asymptotic estimates for the Dirichlet and Neumann eigenfunctions on annuli that are needed to establish eigenvalue crossing, non-resonance and transversality conditions. As discussed above, these will play a key role in the bifurcation argument later on. In Section \ref{S.deform} we reformulate the eigenvalue problem for a deformed annulus in terms of functions defined on a fixed domain and introduce the functional setting that we use. The proof of Theorem~\ref{T.main} is presented in Section~\ref{S.bifurcation}. The applications to an analog of the Pompeiu problem and to the construction of non-radial stationary Euler flows on the plane are developed in Section~\ref{S.corollaries}.  For completeness, in Section~\ref{S.Rigidity} we establish two rigidity results for the problem considered in this paper. We first show how existing arguments on the Schiffer problem prove that if a domain has infinitely many Neumann eigenfunctions that are locally constant on the boundary, then such domain is either a disk or an annulus. Then, we show that if the Neumann eigenvalue~$\mu$ is sufficiently low, then $\Om$ must be as well either a disk or an annulus. The paper is concluded with two appendices. In Appendix \ref{A.transversality}, using some elementary numerical computations, we show that the transversality condition holds for $l = 4$. Appendix~\ref{A.PullBack} contains some auxiliary computations that are used in the proof of the main theorem.

\section{The Dirichlet and Neumann spectrum of an annulus}
\label{S.eigen}

In this section we shall prove the auxiliary results about Dirichlet and Neumann eigenvalues of annular domains that we will need in the proof of Theorem~\ref{T.main}. Specifically, the main results of this section are Proposition~\ref{P.cross1}, where we establish the key eigenvalue crossing condition and a non-resonance property, and Proposition~\ref{P.cross2}, a technical result which will readily imply the  transversality condition required in the Crandall--Rabinowitz theorem. 

In view of the scaling properties of the Laplacian and of its invariance under translations, it suffices to consider annuli centered at the origin and with fixed outer radius.  In polar coordinates $(r,\te)\in\RR^+\times\TT$, let us therefore consider the annular domain
\[
\Om_a :=\{(r,\te)\in(a,1)\times\TT\}\,,
\]
where $a\in(0,1)$ and $\TT:=\RR/2\pi\ZZ$. 
It is well known that an orthogonal basis of $L^2(\Om_a,r \,dr \,d\te)$ of Neumann eigenfunctions in~$\Om_a $:
\[
\{\psi_{0,n}(r), \psi_{l,n}(r)\cos l\te,\psi_{l,n}(r)\sin l\te: l\geq1, \; n\geq0\}\,.
\]
Here, for each~$\leq 0$, $\{\psi_{l,n}\}_{n=0}^\infty$ is an orthonormal basis of $L^2((a,1),r\, dr)$ consisting of eigenfunctions of the associated radial operator. Thus these functions (whose dependence on~$a$ we omit notationally) satisfy the ODE
\begin{equation}\label{E.ODE.Neumann} \tag{${\rm N}_a^l$}
\psi_{l,n}''+\frac{\psi_{l,n}'}r-\frac{l^2 \psi_{l,n}}{r^2}+\mu_{l,n}(a)\, \psi_{l,n}=0\  \text{ in } (a,1)\,,\ \  \psi_{l,n}'(a)=\psi_{l,n}'(1)=0\,,
\end{equation}
for some nonnegative constants
\[
\mu_{l,0}(a)<\mu_{l,1}(a)<\mu_{l,2}(a)<\cdots < \mu_{l,n}(a) <\, \cdots 
\]
tending to infinity as $n\to\infty$. 
We will also omit the dependence of the eigenvalues on~$a$ when no confusion may arise. 

The Neumann spectrum of the annulus~$\Om_a $, counting multiplicities, is then $\{\mu_{l,n}\}_{l,n=0}^\infty$. Let us record here the following immediate properties of these eigenvalues:
$$ \mu_{0,0}=0  \quad \textup{ and } \quad \mu_{l,n} < \mu_{l',n} \ \mbox{ if } l < l'\,.
$$
Although the eigenvalues of the ODE~\eqref{E.ODE.Neumann} are all simple, the eigenvalues of~$\Om_a $ may have higher  multiplicity larger if $\mu_{l,n}=\mu_{l',n'}$ for some $(l,n)\neq (l',n')$. 

Likewise, an orthogonal basis of $L^2(\Om_a,r \,dr\, d\te)$ consisting of Dirichlet eigenfunctions in the annulus~$\Om_a $ is  
\[
\{\vp_{0,n}(r),\vp_{l,n}(r)\cos l\te,\vp_{l,n}(r)\sin l\te\}_{l=1,n=0}^\infty\,,
\]
where now the radial eigenfunctions satisfy the ODE
\begin{equation}\label{E.ODE.Dirichlet} \tag{${\rm D}_a^l$}
\vp_{l,n}''+\frac{\vp_{l,n}'}r-\frac{l^2 \vp_{l,n}}{r^2}+\la_{l,n}(a)\, \vp_{l,n}=0\ \text{ in } (a,1)\,,\ \ \vp_{l,n}(a)=\vp_{l,n}(1)=0\,,
\end{equation}
for some positive constants
\[
\la_{l,0}(a)<\la_{l,1}(a)<\la_{l,2}(a)<\cdots < \la_{l,n}(a) <\, \cdots
\]
tending to infinity as $n \to \infty$. The Dirichlet spectrum of~$\Om_a $ is therefore $\{\la_{l,n}\}_{l,n=0}^\infty$. Again, we have
$$ \lambda_{l,n} < \lambda_{l',n} \ \mbox{ if } l < l'\,.$$

Observe also that, thanks to the min-max characterization of the eigenvalues, the Neumann and Dirichlet eigenvalues must satisfy$$ \mu_{l,n} < \lambda_{l,n}\,.$$

In the following elementary lemma we show a property relating radial Neumann eigenvalues with Dirichlet ones. This is a variation on the fact that the derivative of a (nonconstant) radial Neumann eigenfunction is a Dirichlet eigenfunction.

\begin{lemma} \label{extra} For any $n \geq 1$, $\mu_{0,n} = \lambda_{1,n-1}.$ \end{lemma}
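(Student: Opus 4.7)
The strategy is to set up an explicit order-preserving bijection between the positive radial Neumann eigenpairs $(\psi_{0,n},\mu_{0,n})_{n\geq 1}$ of~\eqref{E.ODE.Neumann} with $l=0$ and the Dirichlet eigenpairs $(\vp_{1,k},\la_{1,k})_{k\geq 0}$ of~\eqref{E.ODE.Dirichlet} with $l=1$, realised by differentiation in the radial variable.

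For the forward direction, I would take $u:=\psi_{0,n}$ with $n\geq 1$, so that $\mu_{0,n}>0$ and $u$ is nonconstant, and differentiate the equation $u''+u'/r+\mu_{0,n}u=0$ in~$r$. This yields
\[
(u')''+\frac{(u')'}{r}-\frac{u'}{r^2}+\mu_{0,n}\,u'=0\quad\text{in }(a,1)\,,
\]
which is precisely $(\text{D}_a^1)$ with eigenvalue $\mu_{0,n}$, and the Neumann condition $u'(a)=u'(1)=0$ is exactly the Dirichlet condition for $u'$. Since $u$ is nonconstant, $u'\not\equiv 0$, and hence $\mu_{0,n}=\la_{1,k}$ for some $k\geq 0$.

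For the converse, starting from a Dirichlet eigenfunction $\vp:=\vp_{1,k}$ with eigenvalue $\la:=\la_{1,k}>0$, I would define
\[
u(r):=-\frac{1}{\la}\left(\vp'(r)+\frac{\vp(r)}{r}\right)\,,
\]
and check by direct differentiation (using $\vp''+\vp'/r-\vp/r^2+\la\vp=0$) that $u'=\vp$ and $u''+u'/r+\la u=0$. Thus $u$ is a nontrivial radial solution to~\eqref{E.ODE.Neumann} with $l=0$ and boundary condition $u'(a)=\vp(a)=0$, $u'(1)=\vp(1)=0$, so $\la_{1,k}=\mu_{0,m}$ for some $m\geq 1$. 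Note that the construction requires $\la\neq 0$, which is automatic as all Dirichlet eigenvalues of a bounded domain are strictly positive.

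The two correspondences together show that the sets $\{\mu_{0,n}\}_{n\geq 1}$ and $\{\la_{1,k}\}_{k\geq 0}$ coincide. Since both are strictly increasing sequences (recalled before the statement of the lemma) and each eigenvalue in either sequence is simple as an eigenvalue of a regular Sturm--Liouville problem, the equality of sets forces the order-preserving identification $\mu_{0,n}=\la_{1,n-1}$ for every $n\geq 1$. The argument is essentially a direct computation; the only subtle point to watch is the backward construction, where one has to exhibit the correct antiderivative so that $u$ is genuinely a solution of the homogeneous Neumann ODE (and not just up to an additive constant), which is handled by the explicit formula above.
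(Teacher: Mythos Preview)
Your proof is correct and complete, but it takes a different route from the paper's. The paper only does the forward direction (differentiating $\psi_{0,n}$ to get a Dirichlet eigenfunction with $l=1$) and then identifies the index directly via Sturm oscillation theory: since $\psi_{0,n}$ has exactly $n$ sign changes in $(a,1)$, its derivative has exactly $n-1$, and therefore must equal $\vp_{1,n-1}$ up to a constant. Your argument instead builds the backward map via the explicit antiderivative $u=-\la^{-1}(\vp'+\vp/r)$, establishes that the two eigenvalue sets coincide, and then uses the strict monotonicity of both sequences to pin down the index. The paper's approach is shorter once one accepts the nodal count, while yours is self-contained and avoids invoking oscillation theory; either argument is entirely adequate here.
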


\begin{proof}
Recall that $\psi_{0,n}$ is a solution to
$$
\psi_{0,n}''+\frac{\psi_{0,n}'}r+\mu_{0,n}\, \psi_{0,n}=0\quad \text{in } (a,1)\,,\qquad \psi_{0,n}'(a)=\psi_{0,n}'(1)=0\,.
$$
Moreover, $\psi_{0,n}$ has exactly $n$ changes of sign in the interval $(a,1)$. As a consequence, $\phi= \psi_{0,n}'$ changes sign $n-1$ times. Differentiating the equation above, we obtain that $\phi$  solves
$$
\phi''+\frac{\phi'}r-\frac{\phi}{r^2}+\mu_{0,n}\, \phi=0\quad \text{in } (a,1)\,, \qquad \phi(a)=\phi(1)=0\,.
$$
Since the solution space is one-dimensional, $\phi = \vp_{1,n-1}$ (possibly up to a nonzero multiplicative constant), so the lemma is proved. 	
\end{proof}

Next we show that there exist annuli where the third radial Neumann eigenvalue and the first $l$-mode Dirichlet eigenvalue coincide, and where certain non-resonance conditions are satisfied. This technical result is the first key ingredient in the bifurcation argument that the proof of Theorem~\ref{T.main} hinges on.

\begin{proposition}\label{P.cross1}
	For any integer $l \geq 4$, there exists some constant $a_l \in (0,1)$ such that 
	\begin{equation} \label{id.eigenvaluesBif}
		\mu_{0,2}(a_l)=\la_{l,0}(a_l)\,.
	\end{equation}
Moreover, the following non-resonance condition holds:
\begin{equation} \label{NR} \tag{NR} \la_{l,0}(a_l) \neq \lambda_{ml,n}(a_l) \quad \textup{ for all }\quad (m,n)\neq (1,0).\end{equation}
\end{proposition}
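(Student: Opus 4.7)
The plan is to use Lemma~\ref{extra} to recast the crossing condition \eqref{id.eigenvaluesBif} as a pure Dirichlet identity, and then to establish it by applying the intermediate value theorem to the real-analytic function
\[
F_l(a) := \lambda_{l,0}(a) - \lambda_{1,1}(a),
\]
after determining its asymptotic behavior at the two endpoints $a = 0$ and $a = 1$. The non-resonance condition will then be derived from the strict monotonicity of $\lambda_{l,n}$ in both $l$ and $n$, with the exception of the radial ($m=0$) modes, which require a separate analyticity argument.

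First, Lemma~\ref{extra} applied with $n=2$ gives $\mu_{0,2}(a)=\lambda_{1,1}(a)$, so the target identity is equivalent to $F_l(a)=0$. I would then study $F_l$ at the endpoints. As $a\to 0^+$, the explicit Bessel representation of the radial Dirichlet eigenfunctions of $\Om_a$ (together with the fact that $Y_l$ is singular at the origin) yields $\lambda_{l,n}(a)\to j_{l,n+1}^2$, where $j_{l,k}$ denotes the $k$th positive zero of $J_l$. Since $l\mapsto j_{l,1}$ is strictly increasing with $j_{4,1}\approx 7.588 > 7.015\approx j_{1,2}$, one obtains
\[
\lim_{a\to 0^+} F_l(a) = j_{l,1}^2 - j_{1,2}^2 > 0 \qquad \text{for every } l\geq 4.
\]
At the other endpoint, setting $h:=1-a$ and rescaling $r=1-hs$ converts \eqref{E.ODE.Dirichlet} into a singularly perturbed 1D Dirichlet problem on $(0,1)$ whose leading-order eigenvalue is $(n+1)^2\pi^2/h^2$, independent of $l$. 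This gives
\[
F_l(a) = -\frac{3\pi^2}{h^2} + O(h^{-1}) \to -\infty \quad \text{as } a\to 1^-,
\]
so that $F_l(a)<0$ for $a$ close to $1$. By the intermediate value theorem $F_l$ has at least one zero $a_l\in(0,1)$, proving \eqref{id.eigenvaluesBif}.

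For the non-resonance (NR), I would split according to the pair $(m,n)$. When $m\geq 2$, the centrifugal potential $(ml)^2/r^2$ strictly dominates $l^2/r^2$ pointwise in $(a,1)$, so the min--max principle gives $\lambda_{ml,0}(a)>\lambda_{l,0}(a)$, whence $\lambda_{ml,n}(a)\geq \lambda_{ml,0}(a)>\lambda_{l,0}(a)$ for every $n\geq 0$. When $m=1$ and $n\geq 1$, the strict ordering of the simple 1D Dirichlet eigenvalues immediately gives $\lambda_{l,n}(a)>\lambda_{l,0}(a)$. The only case requiring genuine work is $m=0$, namely $\lambda_{0,n}(a_l)\neq \lambda_{l,0}(a_l)$ for all $n\geq 0$. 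Here each function $G_n(a):=\lambda_{0,n}(a)-\lambda_{l,0}(a)$ is real-analytic and, by comparing its Bessel-zero limit at $a=0^+$ with its thin-shell asymptotics at $a=1^-$, is not identically zero; moreover only finitely many indices $n$ can produce a zero of $G_n$ on a given compact subinterval of $(0,1)$, because $\lambda_{0,n}(a)\to +\infty$ uniformly on compact sets as $n\to\infty$. The radial-resonance set is therefore locally finite in $(0,1)$, and one may choose $a_l$ outside it, either by selecting an appropriate zero of $F_l$ when several are available or by a short real-analytic perturbation argument.

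The main obstacle I anticipate is the $m=0$ branch of (NR). Unlike the $m\geq 1$ cases, which follow at once from eigenvalue monotonicity, ruling out the radial resonances $\lambda_{0,n}(a_l)=\lambda_{l,0}(a_l)$ requires careful control of the interlacing of the families $\{\lambda_{0,n}(a)\}$ and $\{\lambda_{l,n}(a)\}$ as $a$ varies and, in principle, also a guarantee that not every zero of $F_l$ is accidentally trapped inside the discrete resonance set. The remaining steps (the Bessel-zero limit, the thin-strip expansion, and the monotonicity-based part of (NR)) are relatively routine in comparison.
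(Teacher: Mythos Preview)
Your argument for the existence of $a_l$ is essentially the paper's: Lemma~\ref{extra}, the Bessel-zero limit as $a\to 0^+$, a thin-annulus expansion as $a\to 1^-$, and the intermediate value theorem. The cases $m\geq 2$ and $(m,n)=(1,n)$ with $n\geq 1$ of \eqref{NR} are also handled identically, via monotonicity.

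The gap is in your treatment of the radial branch $m=0$. You propose to avoid the set where $\lambda_{0,n}(a)=\lambda_{l,0}(a)$ by choosing $a_l$ appropriately among the zeros of $F_l$, or by a perturbation argument. But the zero set of the real-analytic, non-constant function $F_l$ is itself discrete, possibly a single point, and $a_l$ must be \emph{exactly} a zero of $F_l$; you cannot perturb it. So you would have to prove that no zero of $F_l$ lies in the (also discrete) radial-resonance set, and nothing in your outline does this.

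The paper bypasses this entirely by showing that \eqref{NR} holds at \emph{every} crossing point. Once $\mu_{0,2}(a_l)=\lambda_{l,0}(a_l)$, one has $\lambda_{0,0}(a_l)<\lambda_{l,0}(a_l)$ by monotonicity in $l$; for $n\geq 2$, $\lambda_{0,n}(a_l)\geq \lambda_{0,2}(a_l)>\mu_{0,2}(a_l)=\lambda_{l,0}(a_l)$ since the Dirichlet eigenvalue strictly dominates the Neumann one in the same mode; and for $n=1$, Lemma~\ref{extra} gives $\lambda_{l,0}(a_l)=\mu_{0,2}(a_l)=\lambda_{1,1}(a_l)>\lambda_{0,1}(a_l)$. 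This three-line check replaces your genericity argument and closes the gap.
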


\begin{proof}

We set $h := \tfrac{1-a}{\pi}$ and $r =: 1-hx$. Suppose now that $h\ll1$, so $a$ is very close to~1. By the variational characterization of $\MU$, we get
\begin{equation} \label{E.asymp1}
\begin{aligned}
	\MU(a) & = \inf_{\dim V = 3} \, \max_{\psi \in V \setminus \{0\}} \frac{\displaystyle \int_0^{\pi} \psi'(x)^2(1-hx)\, dx}{h^2 \displaystyle \int_0^\pi \psi(x)^2(1-hx)\, dx}\\
	& = \big[h^{-2}+O(h^{-1}) \big]  \inf_{\dim V = 3} \, \max_{\psi \in V \setminus \{0\}} \frac{\displaystyle \int_0^{\pi} \psi'(x)^2\, dx}{\displaystyle \int_0^\pi \psi(x)^2\, dx} \\[0.25cm]
	& = 4h^{-2}+O(h^{-1})\,.
\end{aligned}
\end{equation}
Here, $V$ ranges over the set of 3-dimensional subspaces of $H^1((0,\pi))$ and, in the last identity, we are using that
\begin{equation} \label{E.Eigen4}
 \inf_{\dim V = 3} \, \max_{\psi \in V \setminus \{0\}} \frac{\displaystyle \int_0^{\pi} \psi'(x)^2\, dx}{\displaystyle \int_0^\pi \psi(x)^2\, dx}  = 4\,.
\end{equation}
This follows from the fact that this is the Courant--Fischer formula for the second nontrivial Neumann eigenvalue of $\frac{\rm d^2}{{\rm d}x^2}$ in $(0,\pi)$. As is well known, the corresponding eigenfunction is $\psi(x) := \cos(2x)$, and the eigenvalue is $4$.
\medbreak
Likewise, in the case of the first $l$-mode Dirichlet eigenvalue, we have
 
\begin{align}
	\la_{l,0}(a) & = \inf_{\varphi \in H_0^1((0,\pi)) \setminus\{0\}}  \, \frac{\displaystyle h^{-2} \int_0^{\pi} \psi'(x)^2(1-hx)\, dx + l^2 \int_0^{\pi} \frac{\psi(x)^2}{(1-hx)}\, dx}{ \displaystyle \int_0^\pi \psi(x)^2(1-hx)\, dx} \nonumber \\ \label{E.asymp2}
	 & = h^{-2} \left [  \inf_{\varphi \in H_0^1(0,\pi) \setminus\{0\}}  \, \frac{ \displaystyle \int_0^{\pi} \psi'(x)^2\, dx} { \displaystyle \int_0^\pi \psi(x)^2\, dx} + O(h) \right ] + l^2 (1+O(h))\\[0.5cm] \nonumber
	 & = \big(1+(hl)^2 \big) \big[h^{-2}+O(h^{-1})\big]\,.
\end{align}
Note that in the last identity we are using that 
$$ \inf_{\varphi \in H_0^1(0,\pi) \setminus\{0\}}
\frac{ \displaystyle \int_0^{\pi} \psi'(x)^2\, dx} { \displaystyle \int_0^\pi \psi(x)^2\, dx} = 1\,,
$$
which follows the characterization of the first Dirichlet eigenvalue of $\frac{\rm d^2}{{\rm d}x^2}$ in $(0,\pi)$ using the Courant--Fischer formula. Of course, the first Dirichlet eigenvalue of  $\frac{\rm d^2}{{\rm d}x^2}$ in $(0,\pi)$ is $\psi(x):= \sin(x)$, with  eigenvalue~$1$. 

%Hence, it follows that
%\begin{equation}
%	\lambda_{l,0}(a) = \big(1+(hl)^2 \big) \big[h^{-2}+O(h^{-1})\big]\,.
%\end{equation}
In particular, we conclude from these expressions for $\MU$ and $\lambda_{l,0}$ that 
\begin{equation}  \label{E.muBiggerLambda}
	\MU(a) > \lambda_{l,0}(a)
\end{equation}
whenever $a$ is close enough to $1$.

We now show the converse inequality for $a$ small. In order to do that, recall that, by Lemma \ref{extra}, $\mu_{0,2}(a)= \la_{1,1}(a)$. We now claim that for any $l \geq 1$, $k \geq 0$, 
\begin{equation} \label{disco}
\la_{l,k}(a) \to \la_{l,k}^0\,, \quad \textup{ as } a \to 0\,,
\end{equation}
where we have set
\begin{equation*}  
\la_{l,k}^0:= \inf_{\dim V = k} \, \max_{\varphi \in V \setminus \{0\}} \frac{\displaystyle \int_0^{1} \Big( \varphi'(r)^2  + l^2 \frac{\varphi(r)^2}{r^2} \Big)\, r\, dr}{\displaystyle \int_0^1 \varphi(r)^2\,r\, dr}\,.
\end{equation*}
Here, $V$ ranges over the set of $k$-dimensional subspaces of $\mathcal{H}_0 \subset \mathcal{H}$, where
$$
\mathcal{H} := \Big\{\varphi: (0,1) \to \RR: \int_0^{1} \Big( \varphi'(r)^2  +  \frac{\varphi(r)^2}{r^2}\Big)\,r \, dr < +\infty\ \Big\}\,, $$
and
$$  \mathcal{H}_0:= \big\{\varphi \in \mathcal{H}:\ \varphi(1)=0\big\}\,.
$$
These spaces are equipped with the natural norm
$$
\| \varphi\|_{\mathcal{H}}^2 := \int_0^{1} \left(\varphi'(r)^2  +  \frac{\varphi(r)^2}{r^2}\right)\, r\, dr  \,.
$$
\noindent Note that the functions in $\mathcal{H}$ are continuous in $[0,1]$ and vanish at $0$, see e.g. \cite[Proposition 3.1]{byeon}.

In other words, \eqref{disco} simply asserts that the Dirichlet eigenvalues of the annulus $\Omega_a$ converge, as $a \to 0$, to the Dirichlet eigenvalues of the unit disk. This is well known, but we give a short self-contained proof for the convenience of the reader.  

First, observe that $\la_{l,k}(a)$ is increasing in $a$, and it is always bigger than~$\la_{l,k}^0$. As a consequence, it has a limit and
$$
 \lim_{a\to 0} \la_{l,k}(a) \geq \la_{l,k}^0\,.
$$

To show the converse inequality, it suffices to prove that any function in~$\mathcal{H}_0$ can be approximated by functions vanishing in a neighborhood of $0$. For this, let us choose a smooth cut-off function $\chi:[0, +\infty) \to [0,1] $ such that $\chi|_{[0,1]}=1$ and $\chi_{[2,+\infty)} =0$, and define $\chi_\ep (r):= \chi(\frac{r}{\ep})$ for $\ep $ sufficiently small. We shall next show that, given any $\varphi \in \mathcal{H}$, $\chi_{\ep} \varphi \to 0$ in $\mathcal{H}$ as $\ep \to 0$. This obviously implies~\eqref{disco} since in this case $(1-\chi_{\ep}) \varphi \to \varphi$ in $\mathcal{H}$. 

To show that $\chi_{\ep} \varphi \to 0$ in $\mathcal{H}$,  we first note that
$$ 
\int_0^1 \frac{\chi_\ep(r)^2 \varphi(r)^2}{r} \leq \int_0^\ep \frac{ \varphi(r)^2}{r} \to 0\,, \quad \textup{ as } \ep \to 0\,.
$$
Moreover, as $\ep\to0$,
$$
\int_0^1 \chi_\ep(r)^2 \varphi'(r)^2 r \, dr \leq \int_0^\ep \varphi'(r)^2 r \, dr \to 0\,,
$$
and
$$ 
\begin{aligned}
\int_0^1 \chi_\ep'(r)^2 \varphi(r)^2 r \, dr & \leq \frac{\| \chi'\|_{L^{\infty}}^2}{\ep^2} \int_0^\ep  \varphi(r)^2 r \, dr \\
& \leq \frac{\| \chi'\|_{L^{\infty}}^2}{\ep} \int_0^\ep  \varphi(r)^2 \, dr \leq\| \chi'\|_{L^{\infty}}^2\| \varphi\|_{L^{\infty}([0,\ep])}^2 \to 0\,,
\end{aligned}
$$
since $\varphi$ is continuous in $[0,1]$ and $\varphi(0)=0$. As $(\chi_{\ep} \varphi)'= \chi_{\ep}' \varphi + \chi_{\ep} \varphi'$, it follows that $\chi_{\ep} \varphi \to 0$ in $\cH$ and thus \eqref{disco} follows. 
 
We now compute the eigenvalues $\lambda_{1,1}^0$ and $\la_{l,0}^0$, which can be written explicitly in terms of Bessel zeros. More precisely, it is known that if $j_{m,k}$ is the $k$-th positive zero of the Bessel function of the first kind~$J_m$, then
\[
\lambda_{l,k}^0=j_{l,k+1}^2\,.
\]
The numerical values of the zeros are well known, e.g.,
$$
 j_{1,2} = 7.01559\dots
$$
and
$$
j_{1,1} =  3.83171\dots\,, \  j_{2,1} = 5.13562\dots\,, \  j_{3,1} =  6.38016\dots \,,\  j_{4,1} =  7.58834\dots
$$
%\begin{center} \begin{tabular}{| c | c |}
%	\hline    $l$ $ $  &  $j_{l,1}$ \\
%	\hline	1 &   $ \quad 3.83171\dots \quad$\\
%	2 &      $\quad 5.13562\dots \quad$\\
%	3 &   $\quad 6.38016\dots \quad$ \\
%	4 &    $\quad 7.58834\dots \quad$ \\
%	\hline
%\end{tabular} \end{center}

\begin{figure}[h]
	\centering 
	\begin{minipage}[c]{105mm}
		\centering
		\resizebox{105mm}{62mm}{\includegraphics{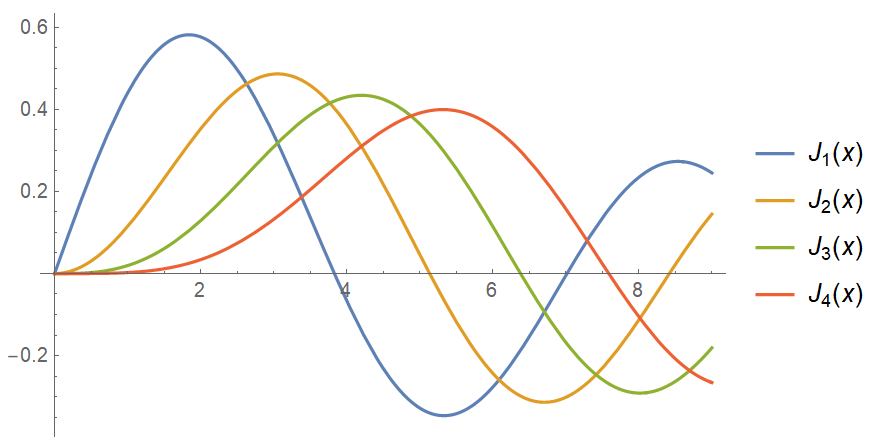}}
	\end{minipage}
	\caption{The first positive zero of $J_l$ is bigger than the second positive zero of $J_1$ if $l \geq 4$.}
\end{figure}

Also, note that $j_{l,k+1} < j_{l',k+1} \textup{ if } l < l'$. Therefore, $\lambda_{1,1}^0 = j_{1,2}^2 < j_{l,1}^2 = \la_{l,0}^0$ for any $\l \geq 4$. By Lemma \ref{extra} and \eqref{disco} we obtain that, for any integer $l \geq 4$ and for sufficiently small $a$, $\mu_{0,2}(a) = \la_{1,1}(a) < \la_{l,0}(a)$. 
This inequality, together with \eqref{E.muBiggerLambda} and the continuous dependence of the eigenvalues on $a$ (see e.g. \cite[Section II.7]{Kato}), allows us to conclude that, for any integer $l \geq 4$, there exists $a_l\in(0,1)$ such that \eqref{id.eigenvaluesBif} holds.
 
We now turn our attention to \eqref{NR}. In the rest of the proof, we fix $a:= a_l$, omitting the dependence on this variable for the ease of notation. First of all, by the strict monotonicity with respect to~$l$ and~$n$ of the Neumann and Dirichlet eigenvalues, it follows that
\begin{align*}
	 \la_{l, n} > \la_{l,0} \ \textup{ for all }n \geq 1\,, \qquad 
	\lambda_{ml,n} \geq \lambda_{ml,0}> \LA \ \textup{ for all } m \geq 2, \ n \geq 0\,.
	\end{align*}
Moreover,
$$
 \la_{0,0} < \la_{l,0}\,, \qquad \lambda_{0, n} \geq \lambda_{0,2} > \mu_{0,2} = \lambda_{l,0} \ \mbox { if } n \geq 2\,.
$$
Finally, by Lemma \ref{extra}, $\mu_{0,2} = \lambda_{1,1}$, which is clearly bigger than $ \lambda_{0,1}$. The proposition is then proved.
\end{proof}

Next proposition is devoted to prove an estimate which will imply the usual transversality condition in the Crandall--Rabinowitz theorem. We prove this estimate for any sufficiently large $l$, and refer to Appendix~\ref{A.transversality} for the direct verification of this condition in the case $l=4$ (the same strategy could be used for other specific values of~$l$). 
In the statement, we denote by a prime the derivative of an eigenvalue with respect to the parameter~$a$.

\begin{proposition}\label{P.cross2}
For any large enough integer $l$, the constant $a_l$ introduced in Proposition \ref{P.cross1} is of the form
\begin{equation} \label{E.al}
	a_l = 1-\frac{\sqrt{3}\pi}{l}+ O(l^{-2})\,.
\end{equation}
	Moreover, the following transversality condition holds:
\begin{equation} \label{T} \tag{T} \mu_{0,2}'(a_l) \neq \lambda_{l,0}'(a_l)\,. \end{equation}
\end{proposition}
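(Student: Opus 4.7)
The plan is to establish both \eqref{E.al} and \eqref{T} via a joint asymptotic analysis of \eqref{E.ODE.Neumann} and \eqref{E.ODE.Dirichlet} in the thin-annulus limit $h := (1-a)/\pi \to 0$, using the rescaling $r = 1 - hx$ that maps $[a,1]$ onto $[0,\pi]$. As suggested by the leading-order computations \eqref{E.asymp1}--\eqref{E.asymp2} and the crossing condition \eqref{id.eigenvaluesBif}, the relevant regime is the one where the auxiliary parameter $\tau := hl$ remains bounded and close to $\sqrt 3$, so the analysis is a joint expansion in the small parameter $h$ and the large parameter $l$ coupled through this relation.

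First I would refine the leading-order asymptotics by one order in $h$. In rescaled variables, \eqref{E.ODE.Neumann} with $l=0$ becomes a regular perturbation of the limiting Neumann eigenproblem $\psi_{xx} + \sigma\psi = 0$ on $(0,\pi)$, whose second nontrivial eigenvalue is $\sigma = 4$ with eigenfunction $\cos(2x)$; one-dimensional perturbation theory then yields $\mu_{0,2}(a) = 4/h^2 + O(1/h)$. Setting $\rho := h^2\lambda - \tau^2$ in \eqref{E.ODE.Dirichlet} reduces it to a regular perturbation of the limiting Dirichlet eigenproblem $\varphi_{xx} + \rho\varphi = 0$ on $(0,\pi)$, whose first eigenvalue is $\rho = 1$ with eigenfunction $\sin x$, so that $\lambda_{l,0}(a) = (1+\tau^2)/h^2 + O(1/h)$, with error uniform for $\tau$ in any fixed compact interval of $(0,\infty)$. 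Imposing $\mu_{0,2}(a_l) = \lambda_{l,0}(a_l)$ then forces $\tau^2 = 3 + O(h)$, i.e., $hl = \sqrt 3 + O(1/l)$, which rearranges to \eqref{E.al}.

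For the transversality, I would derive a Hadamard-type formula for $\mu_{0,2}'(a)$ and $\lambda_{l,0}'(a)$ by differentiating the ODEs in $a$, pairing against the corresponding eigenfunction, and extracting the boundary contribution at $r=a$ (using that $\psi_{0,2}'(a)=0$ and $\varphi_{l,0}(a)=0$ hold identically in $a$). With the eigenfunctions normalized in $L^2((a,1), r\, dr)$, the standard computation yields
\begin{equation*}
\mu_{0,2}'(a) = a\,\mu_{0,2}(a)\,\psi_{0,2}(a)^2\,, \qquad \lambda_{l,0}'(a) = a\,\varphi_{l,0}'(a)^2\,.
\end{equation*}
The leading-order profiles $\psi_{0,2}(r) = C_N \cos\!\bigl(2(1-r)/h\bigr) + O(h)$ and $\varphi_{l,0}(r) = C_D \sin\!\bigl((1-r)/h\bigr) + O(h)$, together with the normalizations $\int_a^1 \psi_{0,2}^2\, r\, dr = \int_a^1 \varphi_{l,0}^2\, r\, dr = 1$, give $C_N^2, C_D^2 = 2/(\pi h) + O(1)$. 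Evaluating at $r = a$ produces $\psi_{0,2}(a)^2 \sim 2/(\pi h)$ and $\varphi_{l,0}'(a)^2 \sim 2/(\pi h^3)$, hence
\begin{equation*}
\mu_{0,2}'(a_l) - \lambda_{l,0}'(a_l) = \frac{6}{\pi h^3}\bigl(1+o(1)\bigr) = \frac{2 l^3}{\sqrt 3\,\pi}\bigl(1 + o(1)\bigr)\,,
\end{equation*}
using $h = \sqrt 3/l + O(1/l^2)$ at $a = a_l$. This is nonzero for every sufficiently large $l$, establishing \eqref{T}.

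The main obstacle will be to make the asymptotic expansions of the eigenfunctions rigorous---controlling simultaneously their boundary values, boundary derivatives, and $L^2$ normalizations---with error terms that remain negligible against the leading $O(l^3)$ behavior of the transversality gap. The delicate point is the uniformity in $\tau$ of the perturbation analysis of the rescaled Dirichlet problem, which carries the extra potential $\tau^2/(1-hx)^2$: one needs estimates with constants independent of $\tau$ in a neighborhood of $\sqrt 3$, so that the coupling $hl \sim \sqrt 3$ can be substituted without incurring uncontrolled errors. I expect this uniform two-parameter ODE perturbation analysis to occupy the bulk of the proof.
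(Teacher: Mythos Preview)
Your strategy is sound and takes a genuinely different route from the paper. The paper parametrizes the rescaled operators by two small parameters $(\eta,\ep)$ (later $(\eta,\de)$), computes the Dirichlet and Neumann eigenvalues $\Lambda_0$ and $\widetilde\nu_2$ to second order by solving the hierarchy of perturbation equations explicitly via Fourier series, and then obtains $\mu_{0,2}'(a_l)-\lambda_{l,0}'(a_l)$ by differentiating these expansions in the parameter. Your approach short-circuits this: the Hadamard-type identities $\mu_{0,2}'(a)=a\,\mu_{0,2}(a)\,\psi_{0,2}(a)^2$ and $\lambda_{l,0}'(a)=a\,\varphi_{l,0}'(a)^2$ (both correct, as one checks by differentiating the boundary conditions and pairing with the eigenfunction) reduce the question to \emph{first-order} control of the eigenfunctions at $r=a$, so no second-order eigenvalue expansion is needed. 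With the normalizations you indicate, the leading terms are $\mu_{0,2}'(a_l)\sim 8/(\pi h^3)$ and $\lambda_{l,0}'(a_l)\sim 2/(\pi h^3)$, and their difference $6/(\pi h^3)=2l^3/(\sqrt3\,\pi)+o(l^3)$ agrees with what the paper's computation yields when converted back to the $a$-variable.

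What the paper's route buys is that the analytic dependence on the parameters makes the expansions and their term-by-term differentiation rigorous without further work; the price is a longer computation. What your route buys is economy: only $O(h)$-accurate eigenfunction asymptotics in $C^1$ on the rescaled interval are needed, and these follow from standard regular perturbation of simple eigenvalues of $\partial_x^2$ on $(0,\pi)$, uniformly for $\tau$ in a compact neighborhood of $\sqrt3$. Your assessment of the remaining obstacle is accurate, and it is modest: for the Neumann problem the perturbing coefficients are $O(h)$ with bounds independent of $l$, while for the Dirichlet problem the extra potential $\tau^2/(1-hx)^2$ depends on $\tau$ but analytically and with uniform bounds on any compact $\tau$-set, so the simplicity of the limiting eigenvalues gives the required uniform $C^1$ control by the implicit function theorem (or Kato's perturbation theory, as in the paper).
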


\begin{proof}

The proof will be divided into four steps for the sake of clarity.

\subsubsection*{Step 1: Zeroth order estimates} 

In this first step we prove \eqref{E.al}. First of all, observe that
\begin{equation}\label{E.easylal}
\lambda_{l,0}(a) := \inf_{\vp\in H_0^1((a,1))\setminus \{0\}} \displaystyle \frac{\displaystyle\int_a^1 \Big(\vp'(r)^2 + \frac{l^2}{r^2} \vp(r)^2 \Big)\, r\, dr}{\displaystyle\int_a^1 \vp(r)^2 r \, dr} \geq l^2\,, 
\end{equation}
for all $a \in (0,1).$ This implies in particular that, for any sufficiently large integer $l$, 
\begin{equation} \label{E.lambdaBiggerMu}
	\lambda_{l,0}(\tfrac12) > \MU(\tfrac12)\,.
\end{equation}

As a consequence, we can take $a_l \in (\tfrac12, 1)$ for sufficiently large $l$. As in the proof of Proposition \ref{P.cross1}, we define $h_l := \tfrac{1-a_l}{\pi}$ and $r =: 1-h_lx$. Then, we can estimate
\begin{align*}
\MU(a_l) & = \inf_{\dim V = 3} \, \max_{\psi \in V \setminus \{0\}} \frac{\displaystyle \int_0^{\pi} \psi'(x)^2(1-h_lx)\, dx}{h_l^2 \displaystyle \int_0^\pi \psi(x)^2(1-h_lx)\, dx}  \\
& \leq  \inf_{\dim V = 3} \, \max_{\psi \in V \setminus \{0\}} \frac{2\displaystyle \int_0^{\pi} \psi'(x)^2\, dx}{\displaystyle h_l^2\int_0^\pi \psi(x)^2\, dx} = 8h_l^{-2}
\end{align*}
which, together with \eqref{E.easylal}, implies that $h_l = O(l^{-1})$ for large~$l$. Note that in the inequality we have used that $a_l\in(\frac12,1)$, so $h_l\leq 2/\pi$, and in the last identity we have again used \eqref{E.Eigen4}. Hence, we can use the asymptotic formulas~\eqref{E.asymp1} and \eqref{E.asymp2} to infer that
$$
4 + O(h_l)= 1+ (h_ll)^2\,.
$$
Thus, \eqref{E.al} holds.

%\begin{remark}
%Refining the argument a little, one can show that for every $l\geq4$ there exists some $a_l \in (0,1)$ for which \eqref{id.eigenvaluesBif} holds. However, in later steps of the proof we will need more refined information about the eigenvalues which we can only obtain for $l\gg1$.
%\end{remark}

\subsubsection*{Step 2: First order estimates and asymptotics for~$a_l$}

Our goal now is to obtain a more accurate asymptotic estimate for $a_l$ in terms of~$l\gg1$. To this end, let us introduce the differential operator
\begin{equation} \label{E.TetaEpsilon}
T_{\eta,\varepsilon} := \partial_x^2 - \frac{\eta(1+\varepsilon)}{1-\eta(1+\varepsilon)x}\, \partial_x - \frac{3(1+\ep)^2}{(1-\eta(1+\ep)x)^2} \,,
\end{equation}
which naturally arises when we consider the eigenvalue equations, \eqref{E.ODE.Neumann} and \eqref{E.ODE.Dirichlet}, in the $x$-variable given by the relation $r =: 1-hx$, with $\eta := \frac{\sqrt{3}}{l}$ and $h = \frac1{\pi}(1-a) =: \eta(1+\ep)$.

Here $(\eta,\varepsilon)$ are real constants, which can be assumed to be suitably small (say, $|\eta| + |\epsilon| < \tfrac{1}{100}$). Note that for $\eta=\varepsilon=0$, the operator is simply $T_{0,0}=\pd_x^2-3$.

Let us respectively denote by  $\Lambda_n(\eta,\ep)$ and ${\nu}_n(\eta,\ep)$ the Dirichlet and Neumann eigenvalues of the operator~$T_{\eta,\ep}$ on the interval $(0,\pi)$, where $n$ ranges over the nonnegative integers. It is standard (and easy to see) that all these eigenvalues have multiplicity~1, so it is well known~\cite[Section II.7]{Kato}
that the eigenvalues and their corresponding eigenfunctions (which we will respectively denote by $\Phi_n(\eta,\ep)$ and $\Psi_n(\eta,\ep)$) depend analytically on~$(\eta,\ep)$. Therefore, for all $n \geq 0$, we can assume that 
\begin{equation} \tag{${\rm N}_{\eta,\ep}$} \label{E.NeumannTep} 
T_{\eta,\ep} \Psi_n + \nu_n(\eta,\ep) \Psi_n = 0 \quad \textup{in } (0,\pi)\,, \qquad \Psi_n'(0) = \Psi_n'(\pi) = 0\,,
\end{equation}
and
\begin{equation} \tag{${\rm D}_{\eta,\ep}$} \label{E.DirichletTep}
T_{\eta,\ep} \Phi_n + \Lambda_n(\eta,\ep) \Phi_n = 0 \quad \textup{in } (0,\pi)\,, \qquad \Phi_n(0) = \Phi_n(\pi) = 0\,.
\end{equation}

Likewise, to deal with the $0$-mode case, we introduce the operator
\begin{equation} \label{E.TtildeEtaEpsilon}
\widetilde{T}_{\eta,\ep}:= \partial_x^2 - \frac{\eta(1+\varepsilon)}{1-\eta(1+\varepsilon)x}\, \partial_x\,,
\end{equation}
and, for all nonnegative integer $n$, we denote by $\widetilde{\nu}_n(\eta,\ep)$ and $\widetilde{\Lambda}_n(\eta,\ep)$ the Neumann and Dirichlet eigenvalues in $(0,\pi)$ with corresponding eigenfunctions $\widetilde{\Psi}_n(\eta,\epsilon)$ and $\widetilde{\Phi}_n(\eta,\ep)$ respectively. In other words, 
\begin{equation} \tag{${\rm \widetilde{N}}_{\eta,\ep}$}
\widetilde{T}_{\eta,\ep} \widetilde{\Psi}_n + \widetilde{\nu}_n(\eta,\ep) \widetilde{\Psi}_n = 0 \quad \textup{in } (0,\pi)\,, \qquad \Psi_n'(0) = \Psi_n'(\pi) = 0\,,
\end{equation}
and 
\begin{equation} \tag{${\rm \widetilde{D}}_{\eta,\ep}$}
\widetilde{T}_{\eta,\ep} \widetilde{\Phi}_n + \widetilde{\Lambda}_n(\eta,\ep) \widetilde{\Phi}_n = 0 \quad \textup{in } (0,\pi)\,, \qquad \Phi_n(0) = \Phi_n(\pi) = 0\,.
\end{equation}

Let us emphasize that the reason for introducing these operators is that one can conveniently express $\mu_{0,2}(a_l)$ and $\la_{l,0}(a_l)$ in terms of the eigenvalues $\widetilde{\nu}_2$ and $\La_0$ of the operators $T_{\eta,\ep}$ and $\widetilde{T}_{\eta,\ep}$, respectively. Indeed, if we take $\eta_l = \frac{\sqrt{3}}{l}$ and write $a_l = 1-\pi h_l =: 1- \pi \eta_l (1+\ep_l)$, using the change of variables $r =: 1-h_l x$, a direct calculation shows that the radial eigenvalue equation $($N$_{a_l}^0)$ is 
\begin{equation*}
\widetilde{T}_{\eta_l,\ep_l} u + h_l^2 \mu_{0,n}(a_l) u =0 \quad \textup{in } (0,\pi)\,, \qquad u'(0) = u'(\pi) = 0\,.
\end{equation*}
Likewise, the radial eigenvalue equation $($D$_{a_l}^{l})$ reads as
\begin{equation*}
T_{\eta_l, \ep_l} u + h_l^2 \lambda_{l,n}(a_l) u = 0 \quad \textup{in } (0,\pi)\,, \qquad u(0) = u(\pi) = 0\,.
\end{equation*}
We then infer that
\begin{equation} \label{E.relationEigenvalues}
\begin{aligned}
\MU(a_l) & = h_l^{-2} \, \nuT_2(\eta_l,\ep_l)\,, \\
\la_{l,0}(a_l) & = h_l^{-2}\, \La_0(\eta_l,\ep_l)\,.
\end{aligned}
\end{equation}

Keeping these identities  in mind, we start with the proof of first order asymptotics for $\Lambda_0$ and $\widetilde{\nu}_2$:
 
\begin{lemma} \label{L.1stOrderLambdaMu}
For $|\eta|+|\ep| \ll 1$,
\begin{align*}
\Lambda_0(\eta,\ep) & = 4 + 3\pi \eta + 6 \ep + O(\eta^2+\ep^2)\,,\\
\widetilde{\nu}_2(\eta,\ep) & = 4+ O(\eta^2+\ep^2)\,.
\end{align*}
\end{lemma}

\begin{proof}
Let us start with $\Lambda_0$. Noting that $\Phi_{00}(x):=\sin(x)$ is the first Dirichlet eigenfunction of $T_{0,0}=\pd_x^2-3$ on $(0,\pi)$ with eigenvalue $\La_{00}:=4$, by the analytic dependence on the parameters, we can take an analytic two-parameter family of Dirichlet eigenfunctions of the form
$$
\Phi_0(\eta,\ep)(x) := \sum_{j,k \geq 0} \eta^j \ep^k \, \Phi_{jk}(x)\,,  \quad \textup{with} \quad \Phi_{jk}(0) = \Phi_{jk}(\pi) = 0\,, \quad \forall\ j,k \geq 0\,,
$$
with  corresponding eigenvalues
$$
\Lambda_0(\eta,\ep) := \sum_{j,k \geq 0} \eta^j \ep^k \, \Lambda_{jk}\,.
$$
Moreover, let us normalize $\Phi_0(\eta,\ep)$ so that
\[
\int_{0}^{\pi} \Phi_0(\ep,\eta)(x) \sin(x)\, dx = \frac{\pi}{2}\,.
\]
Since $\int_{0}^{\pi} \Phi_{00}(x) \sin(x)\, dx = \frac{\pi}{2}$, this implies 
\[
\int_{0}^{\pi} \Phi_{jk}(x) \sin(x)\, dx = 0 \quad \textup{for all} \quad (j,k) \neq (0,0)\,.
\]

Let us now compute the first order terms in the expansion. Since
\begin{align*}
T_{\eta,\epsilon} =  &\ \partial_x^2 - \big(\eta+\ep\eta+\eta^2 x - O(\ep \eta^2 + \eta^3) \big) \,\partial_x \\
& - \big( 3+6\ep + 3 \ep^2 + 6 \eta x + 18 \eta \ep x + 9 \eta^2 x^2 + O(\eta^2 \ep + \ep^2 \eta + \eta^3 + \ep^3) \big)\,,
\end{align*}
defining $L:=  \partial_x^2 + \Lambda_{00} -3=\pd_x^2+1$, we get 
\begin{align*}
0 & = T_{\eta,\ep} \Phi_0 + \La_0(\eta,\ep) \Phi_0 \\
& = L \Phi_{00} + \ep \Big( L \Phi_{01} + ( \Lambda_{01} - 6 )\Phi_{00} \Big)\\
& \quad  + \eta \Big( L \Phi_{10} - \partial_x \Phi_{00} + (\Lambda_{10}-6x) \Phi_{00} \Big) + O(\eta^2+\ep^2)\quad \textup{in } (0,\pi)\,.
\end{align*}
Also, as $\Lambda_{00} = 4$ and $\Phi_{00}(x)= \sin(x)$, the zeroth order term vanishes, and we arrive at the equations 
\begin{align}\label{E.Phi01}
L \Phi_{01} + ( \Lambda_{01} - 6 )\Phi_{00} &= 0  \,, \\
	\label{E.Phi10}
L \Phi_{10} - \partial_x \Phi_{00} + (\Lambda_{10}-6x) \Phi_{00}&= 0 \,.
\end{align} 

Multiplying Equation~\eqref{E.Phi01} by $\Phi_{00}$ and integrating by parts, we easily find that $\Lambda_{01} = 6$ and $\Phi_{01} \equiv 0$. Doing the same with~\eqref{E.Phi10}, we get 
$$
\Lambda_{10} \, \frac{\pi}{2} = \Lambda_{10} \int_0^\pi \sin(x)^2\, dx = 6 \int_0^\pi x \sin(x)^2\, dx = \frac{3\pi^2}{2}\,,
$$
so $\Lambda_{10} = 3\pi$. We then conclude that
$$
\Lambda_0(\eta,\ep) = 4+3\pi\eta+6\ep + O(\eta^2+\ep^2)\,.
$$

We now compute the first order expansion of $\widetilde{\nu}_2$. We start with the observation that $\psiT_{00}(x):=\cos(2x)$ is the third Neumann eigenfunction of $\widetilde T_{0,0}=\pd_x^2$ in $(0,\pi)$, and that the corresponding eigenvalue is~$\widetilde{\nu}_{00}:=4$. Arguing as above, we consider a convergent series expansion of the eigenvalue
$$
\widetilde{\nu}_2(\eta,\ep) := \sum_{j,k \geq 0} \eta^j \ep^k\, \nuT_{jk}
$$
and of the corresponding eigenfunction
$$
\widetilde{\Psi}_2(\eta,\ep)(x):= \sum_{j,k\geq 0} \eta^j \ep^k\,  \psiT_{jk}(x)\,, \quad \textup{with} \quad  \psiT_{jk}'(0) = \psiT_{jk}'(\pi) = 0\,, \quad \forall \ j,k \geq 0\,,
$$
which we normalize so that
$$
\int_0^{\pi} \psiT_2(\eta,\ep)(x)\, \cos(2x) dx = \frac{\pi}{2}= \int_0^{\pi} \psiT_{00}(x)\, \cos(2x) dx\,.
$$

Since
$$
\widetilde{T}_{\eta,\ep} = \partial_x^2 - \big(\eta+\ep\eta+\eta^2 x + O(\ep \eta^2 + \eta^3) \big) \,\partial_x \,,
$$
just as in the case of $\Lambda_0$ we can write
\begin{align*}
0 & = \widetilde{T}_{\eta,\ep} \psiT_2 + \nuT_2(\eta,\ep) \psiT_2 \\
&  = \widetilde{L} \psiT_{00} + \ep \Big( \widetilde{L} \psiT_{01} + \nuT_{01} \psiT_{00} \Big) \\
& \quad + \eta \Big( \widetilde{L} \psiT_{10} - \partial_x \psiT_{00} + \nuT_{10} \psiT_{00} \Big) + O(\eta^2+\ep^2) \quad \textup{in } (0,\pi)\,,
\end{align*}
with $\widetilde{L}:= \partial_x^2 + \nuT_{00}=\pd_x^2+4$.
Arguing as we did in the analysis of $\Lambda_0$, multiplying the equations now by $\psiT_{00}$ and integrating by parts, we infer that $\nuT_{01} = \nuT_{10} = 0$, and thus we conclude that
$$
\nuT_2(\eta,\ep) = 4 + O(\eta^2+\ep^2)\,.
$$
The lemma is then  proved.
\end{proof}

Armed with this auxiliary estimate, we readily obtain the following second order formula for $a_l$:

\begin{lemma}  \label{L.epsilonl}
For any large enough integer $l$, the constant $l$ introduced in Proposition \ref{P.cross1} is of the form
\begin{equation} \label{E.secondOrderal}
a_l = 1-\frac{\sqrt{3}\pi}{l} + \frac{3\pi^2}{2l^2} + O(l^{-3})\,.
\end{equation}
\end{lemma}

\begin{proof}
The result follows from the condition $\LA(a_l)=\MU(a_l)$, \eqref{E.relationEigenvalues}, and the asymptotic formulas of Lemma~\ref{L.1stOrderLambdaMu}.
\end{proof} 

\begin{remark} \label{R.L.epsilonl}
For later convenience, let us write $
a_l = 1-\pi\eta_l (1+\ep_l)$,
with $\eta_l := \frac{\sqrt{3}}{l}$, and reformulate the expansion in \eqref{E.secondOrderal} as follows: there exists  $\delta_l = O(l^{-1})$ such that
\begin{equation} \label{E.epsilonl}
\ep_l = -\frac{\sqrt{3}\pi}{2l} \big(1 + \delta_l \big)\,.
\end{equation} 
\end{remark}

\subsubsection*{Step 3: Second order estimates}

In view of Lemma~\ref{L.epsilonl} and Remark \ref{R.L.epsilonl}, in the operators $T_{\eta,\ep}$ and $\widetilde T_{\eta,\ep}$ it is natural to take $\ep := -\frac\pi{2} \eta(1+\delta)$ with $|\delta| \ll 1$. By abuse of notation, we shall denote by
$$
T_{\eta,\de} := \partial_x^2 - \frac{\eta(1-\frac\pi{2} \eta(1+\delta))}{1-\eta(1-\frac\pi{2} \eta(1+\delta))x}\, \partial_x - \frac{3(1-\frac\pi{2} \eta(1+\delta))^2}{(1-\eta(1-\frac\pi{2} \eta(1+\delta))x)^2} \,,
$$
the resulting differential operator, and we shall use the notation $\nu_n(\eta,\de)$, $\Lambda_n(\eta,\de)$, $\Psi_n(\eta,\de)$ and $\Phi_n(\eta,\de)$ for its eigenvalues and eigenfunctions. Just as above, these operators will provide an efficient way of analyzing the asymptotic behavior of the eigenvalues.

Likewise, we define
$$
\widetilde{T}_{\eta,\de} := \partial_x^2 - \frac{\eta(1-\frac\pi{2} \eta(1+\delta))}{1-\eta(1-\frac\pi{2} \eta(1+\delta))x}\, \partial_x\,,
$$
and we use a similar notation for its eigenvalues and eigenfunctions: $\nuT_n(\eta,\de)$, $\LaT_n(\eta,\de)$, $\psiT_n(\eta,\de)$ and $\phiT_n(\eta,\de)$. Of course, the dependence on the new parameters $(\eta,\de)$ is still analytic.

The second order asymptotic expansions that we need are the following:

\begin{lemma} \label{L.2ndOrder}
For $|\eta|+|\de| \ll 1$,
\begin{align*}
\Lambda_0(\eta,\de) & = 4-3\pi\eta\de -16 \eta^2 + O(|\eta|^3+|\de|^3)\,, \\
%\nu_1(\eta,\de) & = 4-3\pi\eta\de + 24  \eta^2 + O(|\eta|^3+|\de|^3)\,, \\
\nuT_2(\eta,\de) & = 4 + \tfrac34\, \eta^2 + O(|\eta|^3+|\de|^3)\,.
%\LaT_1(\eta,\de) & = 4 + \tfrac72\,\eta^2 + O(|\eta|^3+|\de|^3)\,.
\end{align*}
\end{lemma}

\begin{proof}
We start with $\Lambda_0$. In the case $\eta=\de=0$, it is clear that $\Phi_{00}(x) := \sin(x)$ is an eigenfunction of $T_{\eta,\de}$ with eigenvalue $\Lambda_{00} := 4$. Hence, there exists an eigenfunction of the form
$$
\Phi_0(\eta,\de)(x) := \sum_{j,k \geq 0} \eta^j \de^k\, \Phi_{jk}(x)\,, \quad \textup{with} \quad \Phi_{jk}(0) = \Phi_{jk}(\pi) = 0\,, \quad \forall \ j,k \geq 0\,,
$$
with eigenvalue
\[
\Lambda_0(\eta,\de) := \sum_{j,k\geq 0} \eta^j \de^k\, \Lambda_{jk}\,,
\]
which we normalize as
$$
\int_{0}^{\pi} \Phi_0(\eta,\de)(x) \sin(x)\, dx = \frac{\pi}{2}= \int_{0}^{\pi} \Phi_{00}(x) \sin(x)\, dx\,,
$$
to ensure that
$$
\int_{0}^{\pi} \Phi_{jk}(x) \sin(x)\, dx = 0 \quad \textup{for all} \quad (j,k)\neq (0,0)\,.
$$

Since
\begin{align*}
& T_{\eta,\de} =  \partial_x^2 - \big(\eta + (x-\tfrac{\pi}{2}) \eta^2 + O(\eta^2 |\de| + |\eta|^3) \big)\,\partial_x \\
& \ \, - \big(3-3(\pi-2x)\eta -3\pi\eta\de + \frac34(\pi^2-12\pi x + 12x^2) \eta^2 + O(\eta^2|\de| + |\eta|^3) \big)\,,
\end{align*}
arguing as in Lemma \ref{L.1stOrderLambdaMu}, the eigenvalue equation reads as
\begin{align*}
0 & = T_{\eta,\de} \Phi_0 + \Lambda_0(\eta,\de) \Phi_0 \\
& = L \Phi_{00} + \de \Big( L \Phi_{01} + \Lambda_{01} \Phi_{00} \Big) + \de^2 \Big( L \Phi_{02} + \Lambda_{02} \Phi_{00} + \Lambda_{01} \Phi_{01} \Big) \\
& \quad + \eta \Big( L \Phi_{10} - \partial_x \Phi_{00} + \big( \Lambda_{10} +3(\pi -2x) \big)\Phi_{00} \Big) \\
& \quad + \eta^2 \Big( L \Phi_{20} - \partial_x \Phi_{10} + (\tfrac{\pi}{2}-x) \partial_x \Phi_{00} \\
& \hspace{1.5cm}  + \big(\Lambda_{10}+3(\pi-2x)\big) \Phi_{10} + \big(\Lambda_{20} - \tfrac34(\pi^2-12\pi x + 12x^2) \big) \Phi_{00} \Big) \\
& \quad + \eta \de \Big( L \Phi_{11}+ \big(\Lambda_{10} + 3(\pi - 2x) \big) \Phi_{01} \\
&  \hspace{1.5cm}   - \partial_x \Phi_{01}  + \Lambda_{01} \Phi_{10} + (\Lambda_{11}+3\pi) \Phi_{00} \Big) + O\big(|\eta|^3+|\de|^3\big) \quad \textup{in } (0,\pi)\,,
\end{align*}
with $L:= \partial_x^2 + 1$. By the choice of $\Lambda_{00}$ and $\Phi_{00}$, the zeroth order term of the above expression cancels. Moreover, arguing as in Lemma \ref{L.1stOrderLambdaMu}, it is immediate to see that $\Lambda_{01} = \Lambda_{02} = 0$ and that $\Phi_{01} \equiv \Phi_{02} \equiv 0$. 

The first nontrivial equation we need to solve is
\begin{equation*}  
L \Phi_{10} - \partial_x \Phi_{00} + \big( \Lambda_{10} +3(\pi -2x) \big)\Phi_{00} = 0 \quad \textup{in } (0,\pi)\,.
\end{equation*}
Multiplying by $\Phi_{00}$ and integrating by parts, we get 
$$
\Lambda_{10}\, \frac{\pi}{2} = \Lambda_{10} \int_0^\pi \sin(x)^2 \,dx = 3 \int_0^\pi (2x-\pi)\sin(x)^2 \,dx = 0\,,
$$
so $\Lambda_{10} = 0$. The equation can then be rewritten as
\begin{equation} \label{E.2nd1}
L \Phi_{10} - \cos(x) + 3(\pi-2x) \sin(x) = 0 \quad \textup{ in } (0,\pi)\,.
\end{equation}
Since $\Phi_{10}(0)=\Phi_{10}(\pi)=0$, the solution can be obtained using Fourier series, and a tedious but elementary computation shows
\begin{equation} \label{E.Phi10Serie}
\Phi_{10}(x) = \frac4{\pi}\sum_{k =1}^\infty \frac{2k(4k^2-13)}{(1-4k^2)^3} \sin(2kx)\,.
\end{equation}
The next equations we have are
\begin{align}
& L \Phi_{11} + (\Lambda_{11}+3\pi) \Phi_{00} = 0 \quad \textup{in } (0,\pi)\,, \label{E.2nd2} \\
& L \Phi_{20} + (\tfrac{\pi}{2}-x) \partial_x \Phi_{00} + 3(\pi-2x)  \Phi_{10} \label{E.2nd3}  \\ \nonumber
& \hspace{0.85cm} - \partial_x \Phi_{10} + \big(\Lambda_{20} - \tfrac34(\pi^2-12\pi x + 12x^2)\big) \Phi_{00} = 0 \quad \textup{in }(0,\pi)\,. 
\end{align}
Multiplying~\eqref{E.2nd2} by $\Phi_{00}$ and integrating by parts, we readily get $\Lambda_{11} = -3\pi$ and $\Phi_{11} \equiv 0$. Finally, doing the same with~\eqref{E.2nd3}, we obtain
$$
\begin{aligned}
\Lambda_{20} = \frac2{\pi} \int_0^\pi \Big(  & \partial_x \Phi_{10} - (\tfrac{\pi}{2}-x) \cos(x)  \\
& - 3(\pi-2x)  \Phi_{10} +  \tfrac34(\pi^2-12\pi x + 12x^2) \sin(x) \Big) \sin(x) \, dx \,.
\end{aligned}
$$
Plugging in the expression for $\Phi_{10}$ that we found in~\eqref{E.Phi10Serie}, a straightforward computation yields 
$
\Lambda_{20} = -16
$, so
 we arrive at the asymptotic formula
$$
\Lambda_0(\eta,\de) = 4-3\pi\eta\de -16\eta^2 + O(|\eta|^3+|\de|^3)\,.
$$

We now compute $\nuT_2$. We then start with $\nuT_{00} := 4$ and $\psiT_{00}(x): = \cos(2x)$, which leads to the formulas

\begin{align*}
	\widetilde{\Psi}_2(\eta,\de)(x) &= \sum_{j,k\geq 0} \eta^j \de^k\,  \psiT_{jk}(x)\,, \quad \textup{with} \quad  \psiT_{jk}'(0) = \psiT_{jk}'(\pi) = 0 \,, \quad \forall \ j,k \geq 0\,,\\[0.2cm]
	\widetilde{\nu}_2(\eta,\de) &= \sum_{j,k \geq 0} \eta^j \de^k\, \nuT_{jk}\,,
\end{align*}
with the normalization
$$
\int_{0}^{\pi} \psiT_2(\eta,\de)(x) \cos(2x)\, dx = \frac{\pi}{2}= \int_{0}^{\pi} \psiT_{00}(x) \cos(2x)\, dx \,.
$$
Taking into account the expansion of $T_{\eta,\de}$, it is clear that
$$
\widetilde{T}_{\eta,\de} =  \partial_x^2 - \big(\eta + (x-\tfrac{\pi}{2}) \eta^2 + O(\eta^2 |\de| + |\eta|^3) \big)\,\partial_x\,. \\
$$
Setting $\widetilde{L}:= \partial_x^2 + 4$, we then get
\begin{align*}
0 & = \widetilde{T}_{\eta,\de} \psiT_2 + \nuT_2(\eta,\de) \psiT_2 \\
& = \widetilde{L} \psiT_{00} + \de \Big( \widetilde{L} \psiT_{01} + \nuT_{01} \psiT_{00} \Big) + \eta \Big( \widetilde{L} \psiT_{10} - \partial_x \psiT_{00} + \nuT_{10}\psiT_{00} \Big) \\
& \quad + \eta^2 \Big( \widetilde{L} \psiT_{20} - \partial_x \psiT_{10} + (\tfrac{\pi}{2}-x) \partial_x \psiT_{00} + \nuT_{10} \psiT_{10} + \nuT_{20} \psiT_{00} \Big) \\
& \quad + \eta \de \Big( \widetilde{L} \psiT_{11} - \partial_x \psiT_{01} + \nuT_{10}  \psiT_{01} + \nuT_{01} \psiT_{10} + \nuT_{11} \psiT_{00} \Big) \\
& \quad + \de^2 \Big( \widetilde{L} \psiT_{02} + \nuT_{02} \psiT_{00} + \nuT_{01} \psiT_{01} \Big) + O\big(|\eta|^3+|\de|^3\big)\,, \quad \textup{in } (0,\pi)\,.
\end{align*}

Arguing as in the first case, it is not difficult to see that $\nuT_{01} = \nuT_{02} = \nuT_{10}$ and that $\psiT_{01} \equiv \psiT_{02} \equiv 0$. The equation for $\psiT_{10}$ therefore reads as
\begin{equation}
\widetilde{L} \psiT_{10} + 2\sin(2x) = 0 \quad \textup{in }(0,\pi)\,,
\end{equation}
so once again we can solve this with zero Neumann boundary conditions by means of Fourier series:
$$
\psiT_{10}(x) = -\frac{16}{\pi} \sum_{j = 0}^\infty \frac{1}{((2j+1)^2-4)^2} \cos((2j+1)x)\,.
$$

On the other hand, we also need that
\begin{align} \label{E.2nd4}
& \widetilde{L} \psiT_{11} + \nuT_{11} \psiT_{00} = 0 \quad \textup{in } (0,\pi)\,,\\ \label{E.2nd5}
& \widetilde{L} \psiT_{20} - \partial_x \psiT_{10} + (\tfrac{\pi}{2}-x) \partial_x \psiT_{00} + \nuT_{20} \psiT_{00} = 0 \quad \textup{in } (0,\pi)\,.
\end{align}
Using $\psiT_{00}$ as test function in \eqref{E.2nd4} and integrating by parts, we get $\nuT_{11} = 0$ and $\psiT_{11} \equiv 0$. Doing the same in \eqref{E.2nd5}, it follows that
$$
\nuT_{20} = \frac2{\pi} \int_0^{\pi} \big( \partial_x \psiT_{10} + (\pi-2x)\sin(2x) \big) \cos(2x)\, dx\,.
$$
Direct computations using the explicit series expansion of $\psiT_{10}$ give $\nuT_{20} = \tfrac34$. Thus, we conclude that$$
\nuT_2(\eta,\de) = 4 + \frac34 \eta^2 + O(|\eta|^3+|\de|^3)\,.
$$
The lemma then follows.
\end{proof}

\subsubsection*{Step 4: The transversality condition}

Armed with the above estimates, we can now finish the proof of Proposition \ref{P.cross2}. We argue as in Lemma \ref{L.epsilonl}. That is, given the parameters $l\gg1$ and $h=(1-a)/\pi\ll1$, we consider the small constants $(\eta,\de)$ for which
\begin{equation}\label{E.fixed}
h= \eta[1-\tfrac\pi 2\eta(1+\de)]\,, %\qquad hl = 3[1-\tfrac\pi 2\eta(1+\de)]
\end{equation}
and recall that in this case
\[
 \nuT(h):= h^2 \MU(a) =\nuT_2(\eta,\de)\,,\qquad \la(h):= h^2\lambda_{l,0}(a)=\Lambda_0(\eta,\de)\,.
\]
By~\eqref{E.fixed}, the derivative of $\la$ with respect to the parameter~$h$, leaving~$l$ fixed, is
\[
\la'(h)=-\frac2{\pi \eta^2}\frac\pd{\pd \de}\La_0(\eta,\de)= \frac2{\pi \eta^2}[3\pi\eta + O(\de^2+\eta^2)] \,,
\]
where we have used the asymptotics of Lemma~\ref{L.2ndOrder}. Note that this asymptotic expansion can be differentiated term by term because it defines an analytic function of~$(\eta,\de)$.

Similarly,
\[
\nuT'(h)=-\frac2{\pi \eta^2}\frac\pd{\pd \de}\nuT_2(\eta,\de)= \frac2{\pi \eta^2}O(\de^2+\eta^2)\,.
\]
Since $\eta_l=\sqrt3/l$ and $\de_l=O(l^{-1})$, we then get that
\[
\nuT'(h_l)=\frac2{\pi \eta_l^2}O(l^{-2})\neq \frac2{\pi \eta^2}[3\sqrt3\pi l^{-1} + O(l^{-2})]=\la'(h_l)
\]
for large~$l$, so~\eqref{T} follows.
\end{proof}

\section{Setting up the problem}
\label{S.deform}

Given a constant $a\in(0,1)$ and functions $b,B\in C^{2,\al}(\TT)$ satisfying
\[
\|b\|_{L^\infty(\TT)}+\|B\|_{L^\infty(\TT)}<\min\Big\{a,\tfrac{1-a}{2}\Big\}\,,
\]
we consider the bounded domain defined in polar coordinates by
\begin{equation}\label{E.defOm}
\Om_a^{b,B}:=\{(r,\te)\in\RR^+\times\TT: a+b(\te)< r< 1+B(\te)\}\,.
\end{equation}

We aim to show that there exist some constant $a>0$ and some nonconstant functions $b$ and $B$ such that there is a Neumann eigenfunction~$u$ on the domain $\Om_a^{b,B}$, with eigenvalue $\MU(a)$, which is locally constant on the boundary. Equivalently, the function $u$ satisfies the equation
\begin{equation}\label{E.uOm}
	\De u+\MU(a) u=0\quad\text{in }\Om_a^{b,B}\,,\qquad \nabla u=0\quad\text{on }\pd\Om_a^{b,B}\,.
\end{equation}
Since finding the functions~$b$ and~$B$ is the key part of the problem, we shall start by fixing some $a>0$ and by mapping the fixed annulus $\Om_\frac12:=\big\{(R,\te)\in \big(\tfrac12,1\big)\times\TT\big\}$
into $\Om_a^{b,B}$
through the $C^{2,\al}$ diffeomorphism 
\begin{equation}\label{E.defPhi}
\Phi_a^{b,B}:\Om_\frac12\to \Om_a^{b,B}
\end{equation}
defined in polar coordinates as $\Om_\frac12\ni(R,\te)\mapsto (r,\te)\in \Om_a^{b,B}$, with
\[
r:=a+(1-a +B(\te))(2R-1)+2(1-R)b(\te) \,.
\]
For later purposes, we also introduce the shorthand notation 
\[
\Phi_a := \Phi_a^{0,0}\,,
\]
and denote the nontrivial component of this diffeomorphism by $\Phi_a^1$, which only depends on the radial variable on~$\Om_\frac12$:
$$
\Phi_a(R,\te) = (\Phi_a^1(R),\te) \,.
$$
Denoting by $\psi_{0,2}$, $\vp_{l,0}$ the radial parts of the corresponding Neumann and Dirichlet eigenfunctions of the annulus $\Om_a$, as in Section~\ref{S.eigen}, we also find it convenient to set
\begin{equation} \label{E.eigenvaluel.fixed.annulus}
 \overline{\psi}_a:= \psi_{0,2} \circ \Phi_a^1 \in C^{2,\alpha}([\tfrac12,1]) \quad \textup{and} \quad 
\overline{\varphi}_a := \varphi_{l,0} \circ \Phi_a^{1} \in C^{2,\alpha}([\tfrac12,1])\,,
\end{equation}
which are functions defined on the interval $[\frac12,1]$.

Using this change of variables, one can rewrite Equation~\eqref{E.uOm} in terms of the function
\[
v:=u\circ\Phi_a^{b,B}\in C^{2,\al}(\Om_\frac12)\,,
\]
as
\begin{equation} \label{E.problem.fixed.annulus}
L_a^{b,B}v=0\quad \text{in }\Om_\frac12\,,\qquad \nabla v=0\quad \text{on }\pd\Om_\frac12\,,
\end{equation}
where the differential operator 
\[
L_a^{b,B}v:= [\De (v\circ (\Phi_a^{b,B})^{-1})]\circ\Phi_a^{b,B} +\MU(a)v
\]
is simply $\De+\MU(a)$ written in the coordinates $(R,\te)$. A tedious but straightforward computation (see Appendix \ref{A.PullBack} for the details) yields

\begin{align*} % \label{P.L}
& L_a^{b,B}v = \frac{1}{4(1-a+B(\theta)-b(\theta))^2}\, \partial^2_R v \\
& \ + \frac{1}{2(1-a+B(\theta)-b(\theta))(a+(1-a +B(\te))(2R-1)+2(1-R)b(\te))} \partial_R v \\
	& \  + \frac{1}{(a+(1-a +B(\te))(2R-1)+2(1-R)b(\te))^2} \Bigg[ \partial_\theta^2 v \\
	&\hspace{3.9cm} + \left(\frac{B'(\theta)(2R-1) + 2(1-R)b'(\theta)}{2(1-a+B(\theta)-b(\theta))} \right)^2 \partial_R^2 v \\
	& \hspace{3.9cm} + \bigg( \frac{(B'(\theta)-b'(\theta))^2(2R-1)+b'(\theta)(B'(\theta)-b'(\theta))}{(1-a+B(\theta)- b(\theta))^2} \\ 
	&\hspace{3.9cm} - \frac{B''(\theta)(2R-1) + 2(1-R)b''(\theta)}{2(1-a+B(\theta)-b(\theta))} \bigg) \partial_R v \\
	& \hspace{3.9cm}  - \frac{B'(\theta)(2R-1) + 2(1-R)b'(\theta)}{1-a+B(\theta)-b(\theta)}\, \partial_{R\theta} v \Bigg] + \mu_{0,2}(a) v\,.
\end{align*}
 
\noindent In particular, 
\begin{equation} \label{E.La}
\begin{aligned}
L_a v := L_a^{0,0} v  = \frac{1}{4(1-a)^2} \bigg[\,  \partial_R^2 v & + \frac{1}{R-\frac12+\frac{a}{2(1-a)}} \partial_R v \\
&  + \frac{1}{(R-\frac12+\frac{a}{2(1-a)})^2} \partial_\theta^2 v \bigg] + \mu_{0,2}(a)v \,.
\end{aligned}
\end{equation}

Let us now present the functional setting that we will use to analyze these operators.
In what follows we will always consider function spaces with dihedral symmetry, that is, spaces of functions which are invariant under the action of the isometry group of a  $l$-sided regular polygon, for $l \geq 3$. More precisely, let us define
$$
C\ev^{k,\alpha}(\overline{\Omega}_\frac12):= \big\{u \in C^{k,\alpha}(\overline{\Omega}_\frac12): u(R,\te)=u(R,-\te)\,, \  u(R,\te) = u (R,\te+\tfrac{2\pi}l)\big\}\,,
$$
whose canonical norm we shall denote by
$$
\|u\|_{C^{k,\alpha}} := \|u\|_{C^{k,\alpha}(\overline{\Omega}_\frac12)}\,.
$$

Following the recent paper \cite{Fall-Minlend-Weth-main}, let us define the ``anisotropic'' space
$$
\mathcal{X}^{k,\al}:= \big\{ u \in C^{k,\alpha}\ev (\overline{\Omega}_\frac12): \partial_R u \in C^{k,\alpha}\ev(\overline{\Omega}_\frac12) \big\}\,,
$$
endowed with the norm
$
\|u\|_{\mathcal{X}^{k,\al}} := \|u\|_{C^{k,\alpha}} + \|\partial_R u \|_{C^{k,\alpha}}
$,
and its closed subspaces
$$
\mathcal{X}^{k,\al}\D := \big\{u \in \mathcal{X}^{k,\alpha}: u = 0 \textup{ on } \partial \Omega_{\frac12} \big\}$$
and
$$
\mathcal{X}^{k,\alpha}\DN  := \big\{u \in \mathcal{X}^{k,\alpha}: u = \partial_R u =0 \textup{ on } \partial \Omega_{\frac12} \big\}\,.
$$
For convenience, we also introduce the shorthand notation
$$
\mathcal{X}:= \mathcal{X}\D^{2,\alpha} \quad \textup{ and } \quad \|u\|_{\cX} := \|u\|_{\cX^{2,\alpha}}\,.
$$
We  also need the space
$$
\mathcal{Y}:= C\ev ^{1,\alpha}(\overline{\Omega}_\frac12) + \mathcal{X}\D ^{0,\alpha}\,,
$$
endowed with the norm
$$
\|u\|_{\mathcal{Y}} := \inf\big\{ \|u_1\|_{C^{1,\alpha}}+\|u_2\|_{\cX^{0,\alpha}} : u_1 \in C\ev ^{1,\alpha}(\overline{\Omega}_\frac12),\ u_2 \in \mathcal{X}\D ^{0,\alpha},\ u = u_1+u_2 \big\}\,.
$$
As discussed in \cite[Remark 3.2]{Fall-Minlend-Weth-main}, it is not difficult to see that $(\mathcal{Y},\|\cdot \|_{\mathcal{Y}})$ is a Banach space.

%Given an integer $l\geq3$, we shall also consider the subspaces
%\[
%\cX := \big\{u \in \cX : u(R,\te) = u (R,\te+\tfrac{2\pi}l) \big\} \quad \textup{ and } \quad \cY := \big\{u \in \cY : u(R,\te) = u (R,\te+\tfrac{2\pi}l) \big\}
%\]
%of functions that are invariant under the rotations of angle $2\pi/l$ (that is, under the dihedral group~$\bD_l$, which is the symmetry group of an $n$-sided regular polygon). 
Furthermore, we define the open subset
\begin{equation*}
\mathcal{U}:= \Big\{(b,B) \in C^{2,\alpha}\ev(\TT) \times C^{2,\alpha}\ev(\TT) : \|b\|_{L^{\infty}(\TT)} + \|B\|_{L^{\infty}(\TT)} < \min\big\{a,\tfrac{1-a}{2}\big\} \Big\}\,.
\end{equation*}

A first result we will need is the following:

\begin{lemma} The following assertions hold true:
	\begin{enumerate}
		\item The function 
		$
		(v,b,B) \mapsto L_a^{b,B}\,[\,\overline{\psi}_a + v\,]
		$, 
with $\overline\psi_a$ given by~\eqref{E.eigenvaluel.fixed.annulus}, maps $\cX\DN ^{2,\alpha} \times \mathcal{U}$ into $\cY$.	
		\item If $v \in \cX$, then $L_a v \in \cY$.
	\end{enumerate}
	
\end{lemma}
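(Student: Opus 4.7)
The plan is, in each of the two cases, to split the quantity in question explicitly as a sum of a function in $C^{1,\alpha}\ev(\overline{\Omega}_\frac12)$ and a function in $\cX\D^{0,\alpha}$, thereby exhibiting it as an element of $\cY$. Four ingredients will drive the analysis: (a)~any $v\in\cX\D^{2,\alpha}\cup\cX\DN^{2,\alpha}$ satisfies $v,\partial_R v\in C^{2,\alpha}\ev$, so $\partial_R^2 v$ and $\partial_{R\theta}v$ lie in $C^{1,\alpha}$ while $\partial_\theta^2 v$ and $\partial_R\partial_\theta^2 v=\partial_\theta^2\partial_R v$ are only $C^{0,\alpha}$; (b)~$v\equiv 0$ on $\partial\Omega_\frac12$ in both cases, and $\partial_R v\equiv 0$ there as well when $v\in\cX\DN^{2,\alpha}$; (c)~$\overline{\psi}_a$ is smooth in $R$ and, because of the Neumann conditions $\psi_{0,2}'(a)=\psi_{0,2}'(1)=0$ characterizing it, one has $\partial_R\overline{\psi}_a=0$ on $\partial\Omega_\frac12$; and (d)~every coefficient of $(R,\theta)$ appearing in the formula for $L_a^{b,B}$ is $C^{2,\alpha}$ when $(b,B)\in\mathcal{U}$, except the one carrying the factor $B''(2R-1)+2(1-R)b''$, which is merely $C^{0,\alpha}$.

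For part~(ii) I would first dispose of the simpler operator $L_a$ in~\eqref{E.La}. Its three summands proportional to $\partial_R^2 v$, $\partial_R v$, and $v$ are multiplied by smooth functions of $R$ alone and, having regularity $C^{1,\alpha}$ or better, combine into a $C^{1,\alpha}\ev$ function. The remaining summand has the schematic form $c(R)\,\partial_\theta^2 v$ with $c$ smooth; it lies in $C^{0,\alpha}$, vanishes on $\partial\Omega_\frac12$ because $v$ does, and its $R$-derivative $c'(R)\partial_\theta^2 v+c(R)\partial_\theta^2\partial_R v$ is again $C^{0,\alpha}$ by~(a). Hence this term belongs to $\cX\D^{0,\alpha}$, which yields $L_a v\in\cY$.

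For part~(i) I would repeat the exercise on the fuller formula for $L_a^{b,B}[\overline{\psi}_a+v]$. Since $\overline{\psi}_a$ depends only on $R$, every $\theta$-derivative annihilates it; in particular $\partial_\theta^2(\overline{\psi}_a+v)=\partial_\theta^2 v$ and $\partial_{R\theta}(\overline{\psi}_a+v)=\partial_{R\theta}v$. A term-by-term inspection then shows that every summand is the product of a $C^{1,\alpha}$ (or better) coefficient with a $C^{1,\alpha}$ function, and hence lies in $C^{1,\alpha}\ev(\overline{\Omega}_\frac12)$, except for two: (I)~the piece proportional to $\partial_\theta^2 v$, whose defect is the low regularity of $\partial_\theta^2 v$; and (II)~the piece containing the factor $\tfrac{B''(2R-1)+2(1-R)b''}{2(1-a+B-b)}\,\partial_R(\overline{\psi}_a+v)$, whose defect is the low regularity of $b''$ and $B''$. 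By (b) and (c), term~(I) vanishes on $\partial\Omega_\frac12$ because $v|_{\partial\Omega_\frac12}=0$ forces $\partial_\theta^2 v|_{\partial\Omega_\frac12}=0$, and term~(II) vanishes because both $\partial_R v$ and $\partial_R\overline{\psi}_a$ vanish there. Applying $\partial_R$ to either term via Leibniz's rule produces a sum of two $C^{0,\alpha}$ contributions thanks to (a) and (d). So (I) and (II) belong to $\cX\D^{0,\alpha}$ and the lemma follows.

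The main obstacle is really only the bookkeeping; the conceptual core of the argument is the coincidence~(c), which forces $\partial_R\overline{\psi}_a$ to vanish on $\partial\Omega_\frac12$ and, combined with the Neumann trace condition built into $\cX\DN^{2,\alpha}$, makes the boundary trace of the problematic term~(II) vanish. It is precisely this alignment between the overdetermined Neumann condition and the functional setting $(\cX,\cY)$ that makes the operator map into the right space; without it, the $b''$/$B''$ term would obstruct closure.
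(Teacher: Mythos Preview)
Your proposal is correct and follows essentially the same approach as the paper: the paper also splits $L_a^{b,B}w$ explicitly into two pieces $F_1\in C^{1,\alpha}\ev(\overline{\Omega}_\frac12)$ and $F_2\in\cX\D^{0,\alpha}$, with $F_2$ consisting precisely of the $\partial_\theta^2 w$ term and the $\partial_R w$ term carrying the $b'',B''$ coefficient (together with one harmless extra term), and then invokes the vanishing of $\partial_R\overline{\psi}_a$ and $\partial_R v$ on $\partial\Omega_\frac12$ to check the Dirichlet condition. Your write-up is in fact more explicit than the paper's in verifying the $\cX\D^{0,\alpha}$ membership via the Leibniz rule.
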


\begin{proof}
	Let $(w,b,B) \in \cX^{2,\alpha} \times \mathcal{U}$ be fixed but arbitrary. We define
	\begin{align*}
		& F_1(w,b,B) := \frac{1}{4(1-a+B(\theta)-b(\theta))^2}\, \partial^2_R w \\
		& \  + \frac{1}{2(1-a+B(\theta)-b(\theta))(a+(1-a +B(\te))(2R-1)+2(1-R)b(\te))} \partial_R w \\
		& \  + \frac{1}{(a+(1-a +B(\te))(2R-1)+2(1-R)b(\te))^2}\, \times\, \\[0.3cm] & \hspace{0.5cm}  \Bigg[  \left(\frac{B'(\theta)(2R-1) + 2(1-R)b'(\theta)}{2(1-a+B(\theta)-b(\theta))} \right)^2 \partial_R^2 w \\
		& \hspace{3.8cm}  - \frac{B'(\theta)(2R-1) + 2(1-R)b'(\theta)}{1-a+B(\theta)-b(\theta)}\, \partial_{R\theta} w \Bigg] + \MU(a)  w \,,
	\end{align*}
	and
	\begin{align*}
		& F_2(w,b,B) :=  \frac{1}{(a+(1-a +B(\te))(2R-1)+2(1-R)b(\te))^2} \Bigg[ \partial_\theta^2 w  \\
		& \ + \bigg( \frac{(B'(\theta)-b'(\theta))^2(2R-1)+b'(\theta)(B'(\theta)-b'(\theta))}{(1-a+B(\theta)- b(\theta))^2} \\
		& \ - \frac{B''(\theta)(2R-1) + 2(1-R)b''(\theta)}{2(1-a+B(\theta)-b(\theta))} \bigg) \partial_R w \Bigg]\,, \\
	\end{align*}
and observe that 
$$
L_a^{b,B}w = F_1(w,b,B) + F_2(w,b,B)\,.
$$
Then, it is easy to check that $F_1(w,b,B) \in C\ev ^{1,\alpha}(\overline{\Omega}_\frac12)$ and that $F_2(w,b,B) \in \mathcal{X}\D ^{0,\alpha}$ for $w = \overline{\psi}_a + v$ with $v \in \cX\DN ^{2,\alpha}$. In this way we conclude (i). Analogously, we can prove (ii), using that in this case $b\equiv B\equiv 0$.
\end{proof}

Our goal now is to find some constant $a \in (0,1)$ and some functions $v \in C^{2,\alpha}(\overline{\Omega}_\frac12)$ and $b,B \in C^{2,\alpha}(\TT)$ such that \eqref{E.problem.fixed.annulus} admits a non-trivial solution. Using the functional setting that we just presented, we can reduce our problem so that the basic unknowns are just a constant $a \in (0,1)$ and a function $v \in \cX$. 

To this end, we define the map
$$
\cX \to \cX\DN ^{2,\alpha} \times C\ev^{2,\alpha}(\TT) \times C\ev^{2,\alpha}(\TT), \qquad v \mapsto ( w _v, b_v, B_v)\,,
$$
where
\begin{equation}\label{E.bvBvuv}
\begin{aligned}
  b_v(\te)&:= -2(1-a)(\overline{\psi}_a''(\tfrac12))^{-1} \partial_R v\big(\tfrac12,\theta\big)\,,  \\
  B_v(\te) &:= -2(1-a)(\overline{\psi}_a''(1) )^{-1} \partial_R v(1,\te)\,,  \\
  w _v(R,\te) &:= v(R,\te) + \frac{\overline{\psi}_a'(R)}{2(1-a)}\Big[2(1-R)b_v(\te) + (2R-1) B_v(\te) \Big]\,, 
 \end{aligned}
 \end{equation}
%$$
%b_v(\te):= \big(\overline{\psi}_a''(\tfrac12)\big)^{-1} \partial_R v\big(\tfrac12,\theta\big) \quad \textup{ and } \quad B_v(\te) := ( \overline{\psi}_a''(1) )^{-1} \partial_R v(1,\te)\,, \quad \textup{ for } \te \in \TT\,,
%$$
%and
%$$ 
%u_v(R,\te) := v(R,\te) - \overline{\psi}_a'(R)\big[2(1-R)b_v(\te) + (2R-1) B_v(\te) \big]\,, \quad \textup{ for } (R,\te) \in \Omega_\frac12\,,
%$$
and introduce the open subset
\begin{align*}
	\mathcal{O}&:= \Big\{v \in \cX : \|b_v\|_{L^{\infty}(\TT)} + \|B_v\|_{L^{\infty}(\TT)} < \min\big\{a,\frac{1-a}{2}\big\} \Big\}.
%\mathcal{O}_l&:=  \big\{v \in \cX : \|b_v\|_{L^{\infty}(\TT)} + \|B_v\|_{L^{\infty}(\TT)} < \min\big\{a,\tfrac{1-a}{2}\big\}  \Big\}
\end{align*}
%with $l\in\NN$.

Following~\cite{Fall-Minlend-Weth-main}, we now define the function
\begin{equation} \label{E.Ga}
G_a: \mathcal{O} \to \mathcal{Y}\,,  \qquad v \mapsto L_a^{b_v, B_v}\,[\,\overline{\psi}_{a} +  w _v\,]\,,
\end{equation}
%which obviously maps $\mathcal O_l\to\cY$.
If $v\in\mathcal X$ satisfies $G_a(v)=0$, where $a$ is any constant in $(0,1)$, then $\widetilde{ w }:= \overline{\psi}_a +  w _v$ is the desired solution to \eqref{E.problem.fixed.annulus} with $b := b_v$,  $B := B_v$. Hence, we can prove Theorem~\ref{T.main} by finding nontrivial zeros of the operator $G_a$ for some $a \in (0,1)$. This will be done in the following section.

\begin{remark} \label{R.ansatz}
The rationale behind the definitions~\eqref{E.bvBvuv} is the following. Recall that our goal to construct our solution by bifurcating from the second nontrivial radial Neumann eigenfunction of an annulus, namely $\psi_{0,2}(r)$. The choice of the functions $b_v$, $B_v$ and $w_v$ in terms of $v$ is precisely motivated by the first order expansion of 
$$
\overline{\psi}_{0,2}^{\,sb,\,sB} := \psi_{0,2} \circ \Phi_a^{\,sb,\,sB}\,,
$$
at $s = 0$, where we are writing $\psi_{0,2}(r,\te) \equiv \psi_{0,2}(r)$ with some abuse of notation. Observe that
$$
\psi_{0,2}^{0,0} = \psi_{0,2} \circ \Phi_a^1 = \overline{\psi}_a\,,
$$
so
$$
 \dds\,  \overline{\psi}_{0,2}^{\,sb, sB} \Big|_{s=0}   =  \big((2R-1) B(\te) + 2(1-R) b(\te) \big)  \big(\psi_{0,2}' \circ \Phi_a^1 \big) = w_a^{b, B}\,,
$$
with 
$$
w_{a}^{b, B}(R,\te) := \frac{\overline{\psi}_a'(R)}{2(1-a)}\Big[2(1-R)b(\te) + (2R-1) B(\te) \Big]\,.
$$
Hence, it is natural to look for a solution to \eqref{E.problem.fixed.annulus} of the form
$$
\widetilde{v} := \overline{\psi}_a + w_a^{b,B} + v\,,
$$
with $v$, $b$ and $B$ small. We have thus chosen $b_v$ and $B_v$ so that, for any $v \in \cX$, the function $w_v :=  w_a^{b_v,B_v} + v$ is in the space~$\cX_{\rm DN}^{2,\al}$. 
\end{remark}

% This will be done in the next section using the classical Crandall--Rabinowitz bifurcation theorem. 
   
%\begin{equation}
%\De \psi_{0,2}+\MU(a) \psi_{0,2}=0\quad\text{in }\Om_a\,,\qquad %\nabla \psi_{2,0}=0\quad\text{on}\pd\Om_a\,.
%\end{equation}

\section{The bifurcation argument}
\label{S.bifurcation}

In this section we present the proof of Theorem \ref{T.main}. The basic idea is to apply the classical Crandall--Rabinowitz bifurcation theorem (see e.g. \cite{M.CR} or \cite[Theorem I.5.1]{Kielhofer}) to the operator $G_a$ introduced in the previous section. The key ingredients to make the argument work are the asymptotic estimates for the eigenvalues of an annulus proved in Section~\ref{S.eigen}.

The first auxiliary result we need is the following:

\begin{lemma}\label{L.DGa}
For all $a \in (0,1)$, the map $G_a: \mathcal{O} \to \mathcal{Y}$ is smooth. Moreover,
$$
DG_a(0)v = L_a v\quad  \textup{ for all } v \in \mathcal{X}\,,
$$
where $L_a$ is the operator defined in~\eqref{E.La}.
\end{lemma}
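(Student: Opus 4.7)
The plan is to establish the two statements separately: smoothness of $G_a$ and the identification $DG_a(0) = L_a$. First I would verify that the ``boundary correction'' map $v \mapsto (b_v, B_v, w_v)$ is linear and continuous from $\mathcal{X}$ into $C^{2,\alpha}\ev(\TT) \times C^{2,\alpha}\ev(\TT) \times \mathcal{X}\DN^{2,\alpha}$. Linearity is immediate from \eqref{E.bvBvuv}, and continuity follows from the boundedness of the trace map $v \mapsto \partial_R v|_{\partial \Omega_\frac12}$. The nontrivial point is that $w_v$ lands in $\mathcal{X}\DN^{2,\alpha}$: since $v \in \mathcal{X}\D^{2,\alpha}$ vanishes on $\partial \Omega_\frac12$, and since $\overline{\psi}_a'(\tfrac12) = 2(1-a)\psi_{0,2}'(a) = 0$ and $\overline{\psi}_a'(1) = 2(1-a)\psi_{0,2}'(1) = 0$ by the Neumann conditions in \eqref{E.ODE.Neumann}, the correction term in $w_v$ also vanishes on $\partial \Omega_\frac12$. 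A short computation then shows
\[
\partial_R w_v\big(\tfrac12,\te\big) = \partial_R v\big(\tfrac12,\te\big) + \frac{\overline{\psi}_a''(\tfrac12)}{2(1-a)}\, b_v(\te) = 0,
\]
by the precise choice of $b_v$ in \eqref{E.bvBvuv}, and analogously at $R = 1$. Here one uses that $\overline{\psi}_a''(\tfrac12) = 4(1-a)^2\psi_{0,2}''(a) \ne 0$, which follows from \eqref{E.ODE.Neumann} evaluated at $r=a$ together with $\psi_{0,2}(a) \ne 0$; likewise at $r=1$.

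For smoothness, the explicit formula for $L_a^{b,B}$ displayed just before \eqref{E.La} has coefficients that are rational functions of $b, B, b', B', b'', B''$ with denominators of the form $(1-a+B-b)^k$ and $(a+(1-a+B)(2R-1)+2(1-R)b)^k$, each bounded away from zero on $\mathcal{U}$. Consequently the map $(w,b,B) \mapsto L_a^{b,B}[\overline{\psi}_a + w]$ is real-analytic from $\mathcal{X}\DN^{2,\alpha} \times \mathcal{U}$ into $\mathcal{Y}$, and composing it with the continuous linear map above gives smoothness of $G_a$.

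For the derivative, the key observation is that, after extending $\psi_{0,2}$ analytically via its ODE to a neighborhood of $[a,1]$, for every sufficiently small $(b, B) \in \mathcal{U}$ the identity
\[
L_a^{b,B}\big[\psi_{0,2} \circ \Phi_a^{b,B}\big] = \big[(\De + \MU(a))\psi_{0,2}\big] \circ \Phi_a^{b,B} = 0
\]
holds, since by construction $L_a^{b,B}$ is $\De + \MU(a)$ written in the pulled-back coordinates $(R,\te)$. Differentiating this identity at $s=0$ along the path $(b,B) = s(b_v, B_v)$, and using that a direct chain-rule computation (cf.\ Remark~\ref{R.ansatz}) yields
\[
\tfrac{d}{ds}\big|_{s=0}\big(\psi_{0,2} \circ \Phi_a^{sb_v, sB_v}\big) = \frac{\overline{\psi}_a'(R)}{2(1-a)}\big[2(1-R)b_v(\te) + (2R-1)B_v(\te)\big] = w_v - v,
\]
one obtains $DL_a^{0,0}[(b_v, B_v)]\,\overline{\psi}_a = -L_a(w_v - v)$. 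Since $b_0 = B_0 = 0$ and $w_0 = 0$, applying the chain rule to $G_a(v) = L_a^{b_v, B_v}[\overline{\psi}_a + w_v]$ at $v=0$ gives
\[
DG_a(0)v = DL_a^{0,0}[(b_v, B_v)]\,\overline{\psi}_a + L_a w_v = -L_a(w_v - v) + L_a w_v = L_a v,
\]
as claimed.

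I do not anticipate any serious obstacle: the main delicate point is the algebraic check that $w_v \in \mathcal{X}\DN^{2,\alpha}$, which is precisely what the constants in front of $\partial_R v(\cdot,\te)$ in \eqref{E.bvBvuv} are designed to enforce. The derivative computation then essentially formalizes the intuition spelled out in Remark~\ref{R.ansatz}: the correction added to $v$ in the definition of $w_v$ is exactly the first-order deformation of $\psi_{0,2} \circ \Phi_a^{b,B}$, so adding it cancels the contribution of $DL_a^{0,0}[(b_v, B_v)]\,\overline{\psi}_a$ to the linearization.
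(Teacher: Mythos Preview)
Your proof is correct and follows essentially the same approach as the paper. The derivative computation is identical in substance: you differentiate the identity $L_a^{b,B}[\psi_{0,2}\circ\Phi_a^{b,B}]=0$ along $s\mapsto s(b_v,B_v)$ and combine with the chain rule applied to $G_a$, exactly as the paper does in \eqref{E.DGa02}--\eqref{E.DGa01}; your $w_v-v$ is precisely what the paper calls $w_a^{b_v,B_v}$. You add detail the paper omits---the explicit verification that $w_v\in\cX\DN^{2,\alpha}$ via the Neumann conditions on $\psi_{0,2}$ and the nonvanishing of $\overline\psi_a''$ at the endpoints, and a more careful justification of smoothness via the rational structure of the coefficients---whereas the paper simply asserts smoothness ``follows immediately from its definition.''
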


\begin{proof}
The smoothness of $G_a$ follows immediately from its definition, so we just have to show that $DG_a(0) = L_a$. As $b_v,B_v,w_v$ depend linearly on~$v$ and
\[
w_v= v + w_{a}^{b_v, B_v}
\]
with
$$
w_{a}^{b_v, B_v}(R,\te) := \frac{\overline{\psi}_a'(R)}{2(1-a)}\Big[2(1-R)b_v(\te) + (2R-1) B_v(\te) \Big]\,,
$$
it is easy to see that
\begin{equation}\label{E.DGa02}
\begin{aligned}
DG_a(0)v & = \dds L_a^{sb_v, sB_v}[\overline{\psi}_a+sw_v] \Big|_{s=0}\\ & = L_a[v+w_a^{b_v,B_v}] + \dds L_a^{sb_v, s B_v} \Big|_{s=0} [\, \overline{\psi}_a ]\,.
\end{aligned}
\end{equation}

To compute the second term, note that the function $\psi_{0,2}(r)$ (see \eqref{E.ODE.Neumann} in Section \ref{S.eigen}) satisfies the ODE
$$
\psi_{0,2}''+ \frac{\psi_{0,2}'}{r} + \MU(a)\psi_{0,2} = 0
$$
for all $r>0$. Therefore, writing as in Remark \ref{R.ansatz} $\psi_{0,2}(r,\te)\equiv \psi_{0,2}(r)$ by abuse of notation, and defining 
$
\overline{\psi}_{0,2}^{\,b,B} := \psi_{0,2} \circ \Phi_a^{b,B},
$
we trivially have
$$
L_a^{b,B}\, \overline{\psi}_{0,2}^{\,b,B} = 0\,, \quad \textup{ in } \Omega_{\frac12}\,.
$$
Substituting $(b,B):=(sb_v,sB_v)$ and differentiating the resulting identity, we then find
\begin{equation}\label{E.DGa01}
\begin{aligned}
0 & = \dds \left( L_a^{s b_v, sB_v}\, \overline{\psi}_{0,2}^{\, sb_v, sB_v} \right)\bigg|_{s = 0} \\
&= \dds L_a^{sb_v, sB_v} \Big|_{s=0} [\, \overline{\psi}_a ] + L_a \left[ \dds \overline{\psi}_{0,2}^{\,sb_v, sB_v} \Big|_{s=0} \right] \\
& = \dds L_a^{sb_v, sB_v} \Big|_{s=0} [\, \overline{\psi}_a ] + L_a w_a^{b_v,B_v}\,.
\end{aligned}
\end{equation}
Since $L_a$ is a linear operator, the result immediately follows by combining \eqref{E.DGa02} and \eqref{E.DGa01}.
\end{proof}

Next we need a regularity result for the operator
$$
\widetilde{L}_a v:= \frac{1}{4(1-a)^2} \bigg[ \partial_R^2 v + \frac{1}{R-\frac12+\frac{a}{2(1-a)}} \partial_R v + \frac{1}{(R-\frac12+\frac{a}{2(1-a)})^2} \partial_\theta^2 v \bigg]\,.
$$

\begin{lemma}\label{L.regularity}
For all $a \in (0,1)$, let $f \in C\ev ^{0,\alpha}(\overline{\Omega}_\frac12)$ and $v \in C\ev ^{2,\alpha}(\overline{\Omega}_\frac12)$ satisfy
\begin{equation} \label{E.regularity}
\widetilde{L}_a v = f, \quad \textup{in }\Om_\frac12\,,\qquad v=0\quad \textup{on }\pd\Om_\frac12\,.
\end{equation}
If $f \in \cY$, then $v \in \cX$. 
\end{lemma}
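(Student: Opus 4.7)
The plan is to upgrade $w := \partial_R v$ from $C^{1,\alpha}$ to $C^{2,\alpha}$ up to the boundary by running Schauder estimates for an oblique (Robin) boundary value problem. Setting $\rho := R - \tfrac12 + \tfrac{a}{2(1-a)}$, the operator $\widetilde{L}_a$ is, up to a constant factor, the Laplacian written in polar coordinates on the annulus $\Om_\frac12$; in particular it is uniformly elliptic, $\rho$ stays bounded away from zero on $\overline{\Om}_\frac12$, and $\partial_R$ is, up to sign, the exterior unit normal to both boundary circles. Since $v \in C\ev^{2,\alpha}$ is given, only the gain of one derivative in the radial direction is needed.

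First I would differentiate the equation $\widetilde{L}_a v = f$ in $R$. A short computation yields
\[
\widetilde{L}_a w \;=\; \partial_R f \;+\; \frac{1}{4(1-a)^2}\Big(\tfrac{1}{\rho^2}\,\partial_R v \;+\; \tfrac{2}{\rho^3}\,\partial_\theta^2 v\Big)\,.
\]
Because $f\in\cY = C\ev^{1,\alpha}(\overline{\Om}_\frac12) + \cX\D^{0,\alpha}$, each summand of any decomposition $f = f_1 + f_2$ satisfies $\partial_R f_j \in C^{0,\alpha}$, so $\partial_R f \in C^{0,\alpha}$ is well defined; together with $v\in C^{2,\alpha}$ this puts the entire right-hand side in $C\ev^{0,\alpha}(\overline{\Om}_\frac12)$.

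Next I would read off the boundary condition for $w$. Since $v \equiv 0$ on each of the two boundary circles, the tangential quantity $\partial_\theta^2 v$ vanishes there as well, and evaluating $\widetilde{L}_a v = f$ on $\partial\Om_\frac12$ collapses it to
\[
\partial_R w + \tfrac{1}{\rho}\, w \;=\; 4(1-a)^2\, f\big|_{\partial\Om_\frac12}\qquad\text{on } \partial\Om_\frac12\,.
\]
A quick verification shows that the trace $f|_{\partial\Om_\frac12}$ is well defined independently of the decomposition (any element of $\cX\D^{0,\alpha}$ vanishes on $\partial\Om_\frac12$) and belongs to $C^{1,\alpha}(\TT)$, because the $C\ev^{1,\alpha}$-component does.

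The conclusion then follows from classical Schauder theory for oblique derivative problems: $\widetilde{L}_a$ is uniformly elliptic with analytic coefficients on the smooth annulus, the coefficient $1/\rho$ is smooth and positive, and $\partial_R$ is transverse to $\partial\Om_\frac12$, so the oblique condition is regular. Starting from the a priori information $w\in C^{1,\alpha}$ and applying the boundary Schauder estimate with $C^{0,\alpha}$ interior datum and $C^{1,\alpha}$ oblique datum promotes $w$ to $C^{2,\alpha}(\overline{\Om}_\frac12)$, and the dihedral symmetry is preserved throughout. Combined with $v\in C^{2,\alpha}$, this is exactly $v\in\cX$. I expect the main subtlety to be the clean identification of the oblique condition for $w$ and the verification that the boundary datum lies in $C^{1,\alpha}(\TT)$ independently of the decomposition; once those points are settled, the gain of one radial derivative follows directly from the standard Schauder estimates.
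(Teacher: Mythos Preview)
Your argument is correct and matches the paper's proof almost verbatim: differentiate the equation in $R$, observe that the resulting right-hand side lies in $C^{0,\alpha}$ because $\partial_R$ maps each summand of $\cY$ into $C^{0,\alpha}$, read off the boundary condition for $w=\partial_R v$ by evaluating $\widetilde L_a v=f$ on $\partial\Om_\frac12$ (where $\partial_\theta^2 v$ vanishes), check that the boundary datum is $C^{1,\alpha}$ since $f|_{\partial\Om_{1/2}}=f_1|_{\partial\Om_{1/2}}$, and invoke boundary Schauder estimates. The only cosmetic difference is that the paper moves the term $\tfrac{1}{\rho}w$ to the right-hand side (it is already known to be $C^{1,\alpha}$ on the boundary since $v\in C^{2,\alpha}$) and calls the resulting problem a Neumann problem, whereas you keep it on the left and invoke oblique/Robin Schauder theory; the two formulations are equivalent.
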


\begin{proof}
Taking into account the definition of $\cX$, we only need to prove that $f \in \cY$ implies $\pd_R v \in C\ev^{2,\alpha}(\overline{\Omega}_\frac12)$. Setting $w:= \pd_R v \in C\ev ^{1,\alpha}(\overline{\Omega}_\frac12)$ and differentiating \eqref{E.regularity}, we find that $w$ satisfies
\begin{equation} \label{E.regularity.w}
\begin{aligned}
\widetilde{L}_a w & = \pd_R f \\
& \quad + \frac{1}{4(1-a)^2} \bigg[ \frac{1}{(R-\frac12+\frac{a}{2(1-a)})^2}\, w + \frac{2}{(R-\frac12+\frac{a}{2(1-a)})^3}\, \pd_\te^2 v \bigg], 
\end{aligned}
\end{equation}
in $\Om_\frac12$, in the distributional sense. Note that the right hand side is in $C^{0,\alpha}(\overline{\Omega}_\frac12)$ when $f\in\cY$.

Isolating $\pd_R^2 v|_{\pd\Om_{\frac12}}$ in Equation \eqref{E.regularity} and using that $v|_{\pd\Om_{\frac12}}=0$, we easily see that
\begin{equation} \label{E.regularity.w.boundary}
\pd_R w\, \big|_{\pd \Omega_\frac12} = 4(1-a)^2 \bigg( f - \frac{1}{R-\frac12+\frac{a}{2(1-a)}} \pd_R v \bigg)\bigg|_{\pd \Omega_\frac12}\,.
\end{equation}
Decomposing $\cY\ni f = f_1 + f_2 $ with $f_1 \in C\ev ^{1,\alpha}(\overline{\Omega}_\frac12)$ and $f_2 \in \cX\D ^{0,\alpha}$, we then see that $f|_{\pd \Omega_\frac12} = f_1 \in C^{1,\alpha}(\partial \Omega_\frac12)$ so $\pd_Rw|_{\Om_{\frac12}} \in C^{1,\alpha}(\partial\Omega_\frac12)$. Standard elliptic regularity theory applied to the Neumann problem \eqref{E.regularity.w}-\eqref{E.regularity.w.boundary} then shows that $\pd_R v = w \in C\ev^{2,\alpha}(\overline{\Omega}_\frac12)$, so the result follows. 
\end{proof}

In the following lemmas, we shall show that $G_a:\mathcal O\to \cY $ satisfies the assumptions of the Crandall--Rabinowitz theorem. The first result we need is the following:

\begin{lemma} \label{L.Fredholm}
For all $a \in (0,1)$ and all $l \in \NN$, $L_a: \cX \to \cY$ is a Fredholm operator of index zero. 
\end{lemma}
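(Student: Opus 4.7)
The plan is to write $L_a$ as a compact perturbation of an isomorphism and invoke the stability of the Fredholm index. Specifically, I will show that $\widetilde L_a : \cX \to \cY$ is itself a Banach space isomorphism and that the inclusion $\iota: \cX \hookrightarrow \cY$ is compact; since $L_a = \widetilde L_a + \mu_{0,2}(a)\, \iota$, the claim will then follow from the standard fact that Fredholm operators of index zero are stable under compact perturbations.

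To establish the isomorphism property of $\widetilde L_a$, I will note first that its boundedness has already been verified in item (ii) of the preceding lemma. Injectivity will follow from the maximum principle: up to the radial change of variable $r = 2(1-a)R + (2a - 1)$, the operator $\widetilde L_a$ coincides with the flat Laplacian in polar coordinates on the annulus $\Om_a$, so any $v \in \cX$ satisfying $\widetilde L_a v = 0$ in $\Om_\frac12$ with $v|_{\pd\Om_\frac12} = 0$ must vanish identically. For surjectivity, I will fix $f \in \cY$ and use that $\cY \subset C\ev^{0,\alpha}(\overline{\Om}_\frac12)$: the standard Schauder theory for the Dirichlet problem yields a unique $v \in C\ev^{2,\alpha}(\overline{\Om}_\frac12)$, with zero boundary values, solving $\widetilde L_a v = f$. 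The nontrivial point is that this $v$ automatically belongs to $\cX$, which is precisely the content of Lemma~\ref{L.regularity}. Continuity of the inverse is then automatic by the open mapping theorem.

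The compactness of $\iota: \cX \hookrightarrow \cY$ will be a direct Arzelà--Ascoli argument: $\cX$ embeds continuously into $C\ev^{2,\alpha}(\overline{\Om}_\frac12)$, the embedding $C^{2,\alpha} \hookrightarrow C^{1,\alpha}$ on the compact set $\overline{\Om}_\frac12$ is compact, and $C\ev^{1,\alpha}(\overline{\Om}_\frac12) \hookrightarrow \cY$ continuously by the very definition of $\cY$. Composing these, one gets a compact embedding $\cX \hookrightarrow \cY$. Hence $\mu_{0,2}(a)\,\iota : \cX \to \cY$ is compact, and the proof concludes as described. The main nontrivial ingredient is thus Lemma~\ref{L.regularity}, which bridges the gap between standard $C^{2,\alpha}$ elliptic regularity and the anisotropic space $\cX$; without this bootstrap, one would be forced to solve the Dirichlet problem directly in the nonstandard functional framework, which would be considerably more delicate. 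The dependence on $l \in \NN$ plays no substantive role beyond ensuring that the dihedral symmetry is preserved by every operation involved.
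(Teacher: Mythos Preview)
Your proposal is correct and follows essentially the same approach as the paper: reduce to showing that $\widetilde L_a$ is an isomorphism (injectivity via the change of variables to the flat Laplacian on $\Om_a$, surjectivity via Schauder theory plus Lemma~\ref{L.regularity}, open mapping theorem), and then add back the zeroth-order term as a compact perturbation using the compact embedding $\cX\hookrightarrow\cY$. The only extra detail you supply is an explicit factorization $\cX\hookrightarrow C^{2,\alpha}\hookrightarrow C^{1,\alpha}\hookrightarrow\cY$ for the compactness of~$\iota$, which the paper simply asserts.
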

 
\begin{proof}
Since the property of being a Fredholm operator and the Fredholm index of an operator are preserved by compact perturbations, and since the embedding $\cX \hookrightarrow \cY$ is compact, it suffices to show that $\widetilde{L}_a= L_a-\MU(a) $ defines a Fredholm operator $\cX \to \cY$ of index zero. To prove this, we shall prove the stronger statement that $\widetilde{L}_a$ is a topological isomorphism. Since $\widetilde{L}_a$ is a linear continuous map, by the open mapping theorem, it suffices to show that $\widetilde{L}_a: \cX \to \cY$ is a bijective map. 

We first prove the injectivity. Let $v \in \cX$ such that $\widetilde{L}_a v = 0$ in $\Omega_\frac12$. Defining $w:= v \circ \Phi_a^{-1}$, we get that
$$
\Delta w = 0 \quad \textup{in } \Omega_a\,, \qquad w = 0 \quad \textup{on } \pd \Omega_a\,.
$$ 
Hence, it follows that $w\equiv 0$. This implies that $v\equiv0$ and the injectivity of $\widetilde{L}_a$ follows.

We now prove that $\widetilde{L}_a$ is onto. In other words, given $f \in \cY$, we show that there exists $v \in \cX$ solving
\begin{equation}\label{E.surjectivity}
	\widetilde{L}_a v = f \quad \textup{in } \Omega_\frac12\,, \qquad v = 0 \quad \textup{on } \pd \Omega_\frac12\,.
\end{equation}
Observe that $ f \in C^{0,\alpha}_l(\overline{\Om}_{\frac 1 2})$. Equation \eqref{E.surjectivity} is equivalent to:
\begin{equation}\label{E.surjectivity2}
	\Delta w = g \quad \textup{in } \Omega_a\,, \qquad w = 0 \quad \textup{on } \pd \Omega_a\,.
\end{equation}
where $w:= v \circ \Phi_a^{-1}$ and $g: =f \circ \Phi_a^{-1}$. The existence of a unique $C^{2,\alpha}$ solution to \eqref{E.surjectivity2} is known. Hence, we obtain a solution $v \in C^{2,\alpha}_l(\overline{\Om}_{\frac 1 2})$ to \eqref{E.surjectivity}. By Lemma \ref{L.regularity}, it follows that $v \in \cX$ and so that $\widetilde{L}_a$ is onto.
\end{proof} 
 
 We are now ready to present the bifurcation argument:
 
\begin{proposition} \label{P.CR}
	Let $l \geq 4$ such that \eqref{NR} and \eqref{T} are satisfied (see Propositions \ref{P.cross1}, \ref{P.cross2}). Take $a_l \in (0,1)$ as in Proposition \ref{P.cross1} and let $\overline{\varphi}_{a_l} \in C^{2,\alpha}([\tfrac12,1])$ be as in \eqref{E.eigenvaluel.fixed.annulus}.
	Then:
	\begin{enumerate}
		\item The kernel of $L_{a_l}$ is one-dimensional. Furthermore, \begin{equation} \label{E.Kernel}
			\Ker(L_{a_l}) = {\rm span}\{\overline{v}\}, \quad \textup{ with } \quad \,\overline{v}(R,\theta):= \overline{\varphi}_{a_l}(R) \cos(l\te)\,.
		\end{equation}
		\item The image of $L_{a_l}$ is given by
		$$
		{\rm{Im}}(L_{a_l}) = \left\{ w \in \Yl: \int_{\Omega_{\frac12}} \overline{v}(R,\theta) w(R,\theta) \Big(R - \frac12 +\frac{a_l}{2(1-a_l)}\Big) dR\, d\te = 0 \right\}\,.
		$$
		\item $L_{a_l}$ satisfies the transversality property, that is 
		$$
		\frac{{\rm d}}{{\rm d}a} L_a \Big|_{a = a_l} [\,\overline{v}\,]  \not\in {\rm Im}(L_{a_l})\,.
		$$
	\end{enumerate}
\end{proposition}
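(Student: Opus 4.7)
The plan is to derive all three parts from the spectral information in Section~\ref{S.eigen}, with the key inputs being the non-resonance condition (NR) of Proposition~\ref{P.cross1} and the transversality condition (T) of Proposition~\ref{P.cross2}. I begin with the kernel computation. Any $v \in \mathcal{X}$ admits, by the $\mathbb{Z}_l$-dihedral symmetry, a Fourier expansion $v(R,\theta) = \sum_{m \geq 0} v_m(R)\cos(ml\theta)$. Since $L_{a_l}$ is, up to the radial diffeomorphism $\Phi_{a_l}$, the Helmholtz operator $\Delta + \mu_{0,2}(a_l)$ on $\Omega_{a_l}$ with Dirichlet boundary values, setting $w := v \circ \Phi_{a_l}^{-1}$ and separating modes yields the radial Dirichlet problem (D$^{\,ml}_{a_l}$) with eigenvalue $\mu_{0,2}(a_l) = \lambda_{l,0}(a_l)$. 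By Proposition~\ref{P.cross1}, condition (NR) implies that this eigenvalue is attained only for $m = 1$, $n = 0$, so $\ker L_{a_l} = \mathrm{span}\{\overline{v}\}$.

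For the image, I would exploit the self-adjointness of the Dirichlet Laplacian on $\Omega_{a_l}$ in $L^2(\Omega_{a_l};\, r\, dr\, d\theta)$. A direct computation using the explicit form of $\Phi_{a_l}$ gives $r = 2(1-a_l)\big(R - \tfrac12 + \tfrac{a_l}{2(1-a_l)}\big)$, so the standard measure pulls back, up to the positive constant $4(1-a_l)^2$, to $w(R)\, dR\, d\theta$ with $w(R) := R - \tfrac12 + \tfrac{a_l}{2(1-a_l)} > 0$. Define the linear functional $\ell : \mathcal{Y} \to \mathbb{R}$ by
\begin{equation*}
\ell(f) := \int_{\Omega_{1/2}} \overline{v}(R,\theta)\, f(R,\theta)\, w(R)\, dR\, d\theta\,,
\end{equation*}
which is well-defined and continuous because every element of $\mathcal{Y}$ is a bounded function. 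Integration by parts on $\Omega_{a_l}$, using that both $\overline{v}$ and $v$ have vanishing Dirichlet trace on $\partial\Omega_{1/2}$, shows $\ell(L_{a_l} v) = 0$ for every $v \in \mathcal{X}$, so $\mathrm{Im}\, L_{a_l} \subseteq \ker \ell$. Since $\ell(\overline{v}) > 0$, $\ker \ell$ has codimension one in $\mathcal{Y}$, and so does $\mathrm{Im}\, L_{a_l}$ by Lemma~\ref{L.Fredholm} together with part~(i); hence the two subspaces coincide.

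For the transversality, I would differentiate a one-parameter family of Dirichlet eigenfunctions. Let $u_a(r,\theta) := \varphi_{l,0}(r;a)\cos(l\theta)$ be the Dirichlet eigenfunction of $\Delta$ on $\Omega_a$ associated to the simple eigenvalue $\lambda_{l,0}(a)$, and set $\tilde{u}_a := u_a \circ \Phi_a$ on $\Omega_{1/2}$. Pulling back the eigenfunction identity to the fixed annulus produces
\begin{equation*}
L_a \tilde{u}_a = \big(\mu_{0,2}(a) - \lambda_{l,0}(a)\big)\, \tilde{u}_a\quad \text{in } \Omega_{1/2}\,.
\end{equation*}
Differentiating at $a = a_l$ and using $\mu_{0,2}(a_l) = \lambda_{l,0}(a_l)$ together with $\tilde{u}_{a_l} = \overline{v}$, one obtains
\begin{equation*}
\left.\tfrac{d}{da} L_a\right|_{a_l}[\overline{v}] \;=\; \big(\mu_{0,2}'(a_l) - \lambda_{l,0}'(a_l)\big)\, \overline{v} \;-\; L_{a_l}\!\left[\partial_a \tilde{u}_a\big|_{a_l}\right].
\end{equation*}
Since $\lambda_{l,0}(a)$ is simple, Kato's analytic perturbation theory ensures that $a \mapsto \tilde{u}_a$ is analytic into a suitable function space and, because $\tilde{u}_a$ vanishes on $\partial\Omega_{1/2}$ for every $a$, its derivative belongs to $\mathcal{X}$; the second term on the right therefore lies in $\mathrm{Im}\, L_{a_l}$. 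Modulo $\mathrm{Im}\, L_{a_l}$, the derivative equals a nonzero multiple of $\overline{v}$ by condition (T) of Proposition~\ref{P.cross2}, and $\overline{v} \notin \mathrm{Im}\, L_{a_l}$ since $\ell(\overline{v}) > 0$. The transversality follows.

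The main structural content is carried by Section~\ref{S.eigen}: (NR) controls the kernel and (T) is precisely what guarantees transversality. The remaining technical points, namely that $\ell$ is well-defined and continuous on the sum space $\mathcal{Y}$ and that $\partial_a \tilde{u}_a|_{a_l}$ lies in $\mathcal{X}$, are routine once the analytic dependence of $\tilde{u}_a$ on $a$ in the simple-eigenvalue regime is invoked; the principal care needed is in bookkeeping the pull-back measure so that the pairing $\ell$ matches the self-adjoint $L^2(\Omega_{a_l}; r\, dr\, d\theta)$ structure used in the integration by parts.
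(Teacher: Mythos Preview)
Your proof is correct and follows essentially the same approach as the paper's: for (i) you pull back to the Dirichlet problem on $\Omega_{a_l}$ and invoke (NR); for (ii) you show $\mathrm{Im}\,L_{a_l}\subset\ker\ell$ by integration by parts and use the Fredholm index zero from Lemma~\ref{L.Fredholm} together with (i) to match codimensions; for (iii) you differentiate the identity $L_a\overline v_a=(\mu_{0,2}(a)-\lambda_{l,0}(a))\overline v_a$ and invoke (T). Your write-up is slightly more explicit than the paper's in a couple of places---you spell out the pulled-back measure, note $\ell(\overline v)>0$ to ensure $\ker\ell$ genuinely has codimension one, and invoke Kato's analytic perturbation theory for the regularity of $\partial_a\tilde u_a$---but these are refinements of the same argument rather than a different route.
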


\begin{proof}
	(i) Let $w \in \Ker(L_{a_l})$ and define $z:= w \circ \Phi_{a_l}^{-1}$. Then, $w$ is a $l$-symmetric solution to the problem
	$$ \Delta z + \mu(a_l) z  =0 \quad \textup{in } \Omega_a\,, \qquad z = 0 \quad \textup{on } \pd \Omega_a\,.$$
By the analysis made in Section 2, we know that $z(r,\te) = t \varphi_{l,0}(r) \cos (l \te)$, with $t \in \RR$. Hence, (i) follows.

(ii) By Lemma \ref{L.Fredholm} we know that $L_{a_l}$ is a Fredholm operator of index zero. Thus, it is enough to prove that
$$
{\rm{Im}}(L_{a_l}) \subset \left\{ w \in \Yl: \int_{\Omega_{\frac12}} \overline{v}(R,\theta) w(R,\theta) \Big(R - \frac12 +\frac{a_l}{2(1-a_l)}\Big)\, dR \,d\te = 0 \right\}\,.
$$
Let $w $ be in the image set of $\mathcal L$, that is, $w = L_{a_l}u$ for some $u \in \Xl$. Since $\overline{v} \in \Ker(L_{a_l})$, integrating by parts, we obtain that
\begin{align*}
& \int_{\Omega_{\frac12}} \overline{v}(R,\theta) w(R,\theta) \Big(R - \frac12 +\frac{a_l}{2(1-a_l)}\Big) dR\, d\te \\
& \quad = \int_{\Omega_{\frac12}} L_{a_l}\overline{v}(R,\theta) u(R,\theta) \Big(R - \frac12 +\frac{a_l}{2(1-a_l)}\Big) dR\, d\te = 0\,, 
\end{align*}
and the desired inclusion follows. 

(iii) For all $a \in (0,1)$, we set $\overline{v}_a(R,\theta):= \overline{\varphi}_a(R) \cos(l\te) \in \Xl$, with $\overline{\varphi}_a$ as in \eqref{E.eigenvaluel.fixed.annulus}, and $\overline{w} := \tfrac{{\rm d}}{{\rm d}a} \overline{v}_a|_{a = a_l} \in \Xl$. Then, taking into account \eqref{E.ODE.Dirichlet}, we see that
$$
L_a \overline{v}_a = (\mu_{0,2}(a) - \lambda_{l,0}(a)) \overline{v}_a\,, \quad \textup{ in } \Omega_\frac12\,.
$$
Moreover, differentiating this identity with respect to $a$ and evaluating at $a = a_l$, we get that
$$
\frac{{\rm d}}{{\rm d}a} L_a \Big|_{a = a_l} [\,\overline{v}\,] + L_{a_l}\overline{w} = (\mu_{0,2}'(a_l) - \lambda_{l,0}'(a_l))\overline{v}\,, \quad \textup{ in } \Omega_\frac12\,.
$$
By Proposition \ref{P.cross2}, we know that $\mu_{0,2}'(a_l) - \lambda_{l,0}'(a_l) \neq 0$ and by (i), we have that $\overline{v} \in \Ker(L_{a_l})$. Hence,  $
\tfrac{{\rm d}}{{\rm d}a} L_a \big|_{a = a_l}[\,\overline{v}\,] \not\in {\rm Im}(L_{a_l})
$,  so we conclude that $L_{a_l}$ satisfies the transversality property. 
\end{proof}

We are now ready to prove Theorem \ref{T.main}. Using the notation we have introduced in previous sections, here we provide a more precise statement of this result. Let us recall that the domains of the form $\Omega_a^{b,B}$ were introduced in \eqref{E.defOm}.

\begin{theorem} \label{T.mainDetails}
Let $l \geq 4$ be such that \eqref{NR} and \eqref{T} are satisfied. This condition holds, in particular, for $l=4$ and for all large enough~$l$. Given $a_l \in (0,1)$ as in Proposition \ref{P.cross1}, there exist some $s_l > 0$ and a continuously differentiable curve
$$
\big\{(a(s), b_s, B_s): s \in (-s_l ,s_l ),\ (a(0), b_0, B_0) = (a_l,0,0) \big\} \subset (0,1) \times (C_l^{2,\alpha}(\TT))^2  \,,
$$
with
\begin{align*}
	b_s(\te) &= -2s(1-a(s)) \, \frac{\overline{\varphi}'_{a_l}(\tfrac12)}{\overline{\psi}''_{a(s)}(\tfrac12)}\,  \cos(l\te) + o(s)\,, \\
	B_s(\te) &= -2s(1-a(s)) \, \frac{\overline{\varphi}'_{a_l}(1)}{\overline{\psi}''_{a(s)}(1)}\,  \cos(l\te) + o(s)\,,
\end{align*}
such that the overdetermined problem 
\begin{equation*}
\De u_s+\MU(a(s)) u_s=0\quad\text{in }\Om_{a(s)}^{b_s,B_s}\,,\qquad \nabla u_s=0\quad\text{on }\pd\Om_{a(s)}^{b_s,B_s}\,,
\end{equation*}
admits a nonconstant solution for every $s \in (-s_l ,s_l )$. Moreover, both the solution $u_s$ and the boundary of the domain are analytic. 
\end{theorem}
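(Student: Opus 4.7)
The strategy is to apply the Crandall--Rabinowitz bifurcation theorem (in the form of \cite[Theorem I.5.1]{Kielhofer}) to the nonlinear operator $G_a:\mathcal{O}\to\mathcal{Y}$ from~\eqref{E.Ga}, treating the inner radius $a$ as the bifurcation parameter and $v\in\cX$ as the unknown. The required hypotheses have essentially all been assembled earlier. First, $G_a(0)=L_a\overline{\psi}_a=0$ for every $a$, because $\psi_{0,2}$ is by construction a radial Neumann eigenfunction with eigenvalue $\mu_{0,2}(a)$. Second, $G_a$ depends jointly smoothly on $(a,v)$ and $D_vG_a(0)=L_a$ by Lemma~\ref{L.DGa}, while $L_a:\cX\to\cY$ is Fredholm of index zero by Lemma~\ref{L.Fredholm}. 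Third, fixing any $l\geq 4$ for which \eqref{NR} and \eqref{T} hold (that is, $l=4$ by Appendix~\ref{A.transversality} or any sufficiently large $l$ by Proposition~\ref{P.cross2}), Proposition~\ref{P.CR} delivers the one-dimensional kernel $\ker L_{a_l}=\mathrm{span}\{\overline{v}\}$ with $\overline{v}(R,\theta)=\overline{\varphi}_{a_l}(R)\cos(l\theta)$, the codimension-one image, and the transversality condition $\partial_aL_a|_{a=a_l}[\overline{v}]\notin\mathrm{Im}(L_{a_l})$.

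Applying Crandall--Rabinowitz, we obtain $s_l>0$ and a $C^1$ curve $s\mapsto(a(s),v(s))\in(0,1)\times\cX$ with $(a(0),v(0))=(a_l,0)$ and $v(s)=s\overline{v}+o(s)$ in $\cX$ such that $G_{a(s)}(v(s))=0$ for every $s\in(-s_l,s_l)$. The next step is to unwind the functional setup back into the original overdetermined problem: set $b_s:=b_{v(s)}$, $B_s:=B_{v(s)}$ via~\eqref{E.bvBvuv} (with parameter $a=a(s)$), and define
\[
u_s:=\big(\overline{\psi}_{a(s)}+w_{v(s)}\big)\circ\big(\Phi_{a(s)}^{b_s,B_s}\big)^{-1}.
\]
By the very definition of $G_a$, the identity $G_{a(s)}(v(s))=0$ translates to $\Delta u_s+\mu_{0,2}(a(s))\,u_s=0$ in $\Omega_{a(s)}^{b_s,B_s}$, while the condition $\nabla u_s\equiv 0$ on $\partial\Omega_{a(s)}^{b_s,B_s}$ is a direct consequence of $w_{v(s)}\in\cX\DN^{2,\al}$, which is precisely what the specific choices of $b_v$ and $B_v$ in~\eqref{E.bvBvuv} were engineered to guarantee (see Remark~\ref{R.ansatz}). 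The first-order expansions of $b_s$ and $B_s$ stated in the theorem follow by plugging $\partial_Rv(s)(R,\theta)=s\,\overline{\varphi}_{a_l}'(R)\cos(l\theta)+o(s)$ into~\eqref{E.bvBvuv}.

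Nontriviality for $s\neq 0$ is immediate: the leading $\cos(l\theta)$ term with $l\geq 4$ makes $b_s$ and $B_s$ nonconstant, so $\Omega_{a(s)}^{b_s,B_s}$ fails to be rotationally symmetric and $u_s$ cannot be radial (which would force both boundary components to be circles). Interior analyticity of $u_s$ is standard for the Helmholtz equation $\Delta u+\mu u=0$, and analyticity of $\partial\Omega_{a(s)}^{b_s,B_s}$ follows from the Kinderlehrer--Nirenberg regularity theorem \cite{KinderlehrerNirenberg} applied to each connected boundary component, on which $u_s$ is constant and $\partial_\nu u_s=0$. In the present setup the real difficulty of the argument was absorbed into Propositions~\ref{P.cross1} and~\ref{P.cross2}; the remaining point requiring vigilance here is simply that the Fredholm/regularity properties of $L_a$ on the anisotropic spaces $\cX,\cY$ (which was the whole reason for introducing them in Section~\ref{S.deform}) genuinely hold in our planar-annulus setting, as verified in Lemmas~\ref{L.regularity} and~\ref{L.Fredholm}.
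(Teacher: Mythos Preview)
Your proof is correct and follows essentially the same route as the paper: you assemble the Crandall--Rabinowitz hypotheses from Lemma~\ref{L.DGa}, Lemma~\ref{L.Fredholm}, and Proposition~\ref{P.CR}, apply the theorem to obtain the curve $(a(s),v(s))$ with $v(s)=s\overline{v}+o(s)$, unwind the definitions~\eqref{E.bvBvuv}--\eqref{E.Ga} to recover $u_s$ on $\Omega_{a(s)}^{b_s,B_s}$, and invoke Kinderlehrer--Nirenberg for analyticity. The paper proceeds identically; the only cosmetic differences are that you spell out why $G_a(0)=0$ and why $w_{v(s)}\in\cX\DN^{2,\alpha}$ forces the overdetermined boundary condition, whereas the paper leaves these implicit.
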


\begin{proof} 
First, note that the fact that the conditions \eqref{NR} and \eqref{T} hold for all large enough~$l$ was established in Propositions \ref{P.cross1} and \ref{P.cross2}, while the case $l=4$ follows from Propositions \ref{P.cross1} and~\ref{l=4}.

By Proposition \ref{P.CR} we know that the map
\begin{equation}
\big(0,1\big) \times \mathcal{O} \to \cY\,, \qquad (a,v) \mapsto G_a(v)\,,
\end{equation}
satisfies the hypotheses of the Crandall--Rabinowitz theorem. Therefore, there exits a nontrivial continuously differentiable curve through $(a_l,0)$,
$$
\big\{(a(s), v_s): s \in (-s_l ,s_l ),\ (a(0), v_0) = (a_l,0) \big\}\subset (0,1)\times \mathcal O\,,
$$
such that
$$
G_{a(s)}(v_s) = 0\,, \quad \textup{for} \quad s \in (-s_l ,s_l )\,.
$$
Moreover, for $\overline{v}$ as in \eqref{E.Kernel}, it follows that
\begin{equation} \label{E.expansionvs}
v_s = s \, \overline{v} + o(s) \quad \textup{in } \cX \quad \textup{as} \quad s \to 0\,.
\end{equation}

As we saw in \eqref{E.bvBvuv}-\eqref{E.Ga}, for all $s \in (-s_l ,s_l )$, the function 
$$
\widetilde{ w }_s = \overline{\psi}_{a(s)} +  w _{v_s}
$$
is then a nontrivial solution to \eqref{E.problem.fixed.annulus} with $b := b_{v_s}$,  $B := B_{v_s}$ and $a := a(s)$. More explicitly,  
$$
\widetilde{ w }_s(R,\te) = \overline{\psi}_{a(s)}(R) + v_s(R,\te) + \frac{\overline{\psi}_{a(s)}'(R)}{2(1-a(s))} \Big[2(1-R)b_{s}(\te) + (2R-1)B_{s}(\te) \Big]\,,
$$
with
\begin{equation} \label{E.bvsBvs}
\begin{aligned}
b_{s}(\te) & := b_{v_s}(\te) = -2(1-a(s))(\overline{\psi}_{a(s)}''(\tfrac12))^{-1} \partial_R v_s\big(\tfrac12,\theta\big)\,,\\
B_{s}(\te) & := B_{v_s}(\te) = -2(1-a(s))(\overline{\psi}_{a(s)}''(1) )^{-1} \partial_R v_s(1,\te)\,.
\end{aligned}
\end{equation}
Furthermore, combining \eqref{E.Kernel} with \eqref{E.expansionvs},   we get
\begin{align*}
	b_s(\te) &= -2s(1-a(s)) \, \frac{\overline{\varphi}'_{a_l}(\tfrac12)}{\overline{\psi}''_{a(s)}(\tfrac12)}\,  \cos(l\te) + o(s)\,,\\[1mm]
	B_s(\te) &= -2s(1-a(s)) \, \frac{\overline{\varphi}'_{a_l}(1)}{\overline{\psi}''_{a(s)}(1)}\,  \cos(l\te) + o(s)\,.
\end{align*}
This immediately gives the formula presented in the statement of Theorem~\ref{T.main}.

To change back to the original variables $(r,\te) \in  \Omega_{a(s)}^{b_s,B_s}$, we simply define 
$$
u_s := \widetilde{ w }_s \circ \big(\Phi_{a(s)}^{b_s,B_s}\big)^{-1} 
$$
and conclude that, for all $s \in (-s_l ,s_l )$, $u_s \in C^{2,\alpha}(\overline{\Omega}_{a(s)}^{\,b_{v_s}, B_{v_s}})$ solves 
\begin{equation*}
\De u_s+\MU(a(s)) u_s=0\quad\text{in }\Om_{a(s)}^{b_s,B_s}\,,\qquad \nabla u_s=0\quad\text{on }\pd\Om_{a(s)}^{b_s,B_s}\,.
\end{equation*}
The analyticity of both the domains $\Om_{a(s)}^{b_s,B_s}$ and the solutions $u_s$ for all $s \in (-s_l ,s_l )$ follows from  Kinderlehrer--Nirenberg~\cite{KinderlehrerNirenberg}.
\end{proof}

\section{Applications of the main theorem}
\label{S.corollaries}

In this section we present the proofs of the two applications of the main theorem that we discussed in the Introduction:

\begin{proof}[Proof of Corollary~\ref{C.Pompeiu}]
Let $\Om$ be a domain as in Theorem~\ref{T.main}. Therefore, the boundary $\pd\Om$ (which is analytic) consists of two connected components $\Ga_1,\Ga_2$, and there exist two nonzero constants $c_1,c_2$ and a nonconstant Neumann eigenfunction~$u$, satisfying
\[
\De u+\mu u=0
\]
in~$\Om$, such that $u|_{\Ga_j}=c_j$ with $j=1,2$. The complement $\RR^2\backslash\BOm$ has a bounded connected component, which we will henceforth call~$\Om'$, and an unbounded component, which is then $\RR^2\backslash\overline{(\Om\cup \Om')}$. Relabeling the curves~$\Ga_j$ is necessary, we can assume that $\pd\Om'=\Ga_2$.

Now let $f$ be any function such that 
\[
\De f + \mu f=0
\]
on~$\RR^2$, for instance $f(x)=\sin(\mu^{1/2} x_1)$, and set 
\begin{equation}\label{E.defc}
c:= \frac{c_2}{c_1}-1\,.
\end{equation}
As the local maxima of a Bessel function are decreasing, it is easy to see that $c>0$.

We claim that, for any rigid motion~$\cR$,
\begin{equation}\label{E.Pompclaim}
0=\int_{\cR(\Om)}f\, dx - c\int_{\cR(\Om')}f\, dx=\int_{\RR^2} f\circ\cR^{-1}\, \rho\, dx\,,
\end{equation}
where we have introduced the function 
\[
\rho(x):= \mathbbm 1_\Om(x) - c\,\mathbbm 1_{\Om'}(x)\,.
\]

Integrating by parts, one readily finds that the compactly supported $C^{1,1}$~function
\[
w(x):=\begin{cases}
	0 & \text{if } x\in\RR^2\backslash\overline{(\Om\cup\Om')}\,,\\[1mm]
	(1-\frac {u(x)}{c_1})/\mu& \text{if } x\in\overline{\Om}\,,\\[1mm]
	 -c/\mu & \text{if } x\in \Om'\,,
\end{cases}
\]
satisfies
\[
\int_{\RR^2} w\, (\De F+ \mu F)\, dx= \int_{\RR^2} F\rho\, dx\,,
\]
for any~$F\in C^\infty(\RR^2)$\footnote{Equivalently, $\De w +\mu w=\rho$ in the sense of compactly supported distributions.}. As rigid motions commute with the Laplacian, 
\[
\begin{aligned}
\int_{\RR^2} f\circ\cR^{-1}\, \rho\, dx & = \int_{\RR^2}  [\De (f\circ\cR^{-1})+ \mu f\circ\cR^{-1}]\,w\, dx\\
& = \int_{\RR^2} (\De f+\mu f)\circ\cR^{-1}\, w\, dx=0\,.
\end{aligned}
\]
The identity~\eqref{E.Pompclaim}, and therefore Corollary~\ref{C.Pompeiu}, follow.
\end{proof}

Before passing to the proof of Theorem~\ref{T.Euler}, let us recall that $(v,p)\in L^2_{\mathrm{loc}}(\RR^2,\RR^2)\times L^1_{\mathrm{loc}}(\RR^2)$ is a {\em  stationary weak solution}\/ of the incompressible Euler equations on~$\RR^2$ if
\[
\int_{\RR^2} v\cdot \nabla\phi\, dx=\int_{\RR^2} (v_i\,v_j\, \pd_i w_j+ p\Div w)\, dx=0\,,
\]
for all $\phi\in C^\infty_c(\RR^2)$ and all $w\in C^\infty_c(\RR^2,\RR^2)$. The first condition says that $\Div v=0$ in the sense of distributions, while the second condition (where summation over repeated indices is understood) is equivalent to the equation
\[
v\cdot \nabla v+\nabla p=0\,,
\]
when the divergence-free field $v$ and the function~$p$ are continuously differentiable.

\begin{proof}[Proof of Theorem~\ref{T.Euler}]
Take $\Om$, $u$ and $\mu$ as in Theorem~\ref{T.main}. We will also use the domain~$\Om'$ defined in the proof of Corollary~\ref{C.Pompeiu}. Since $\nabla u|_{\pd\Om}=0$, the vector field defined in Cartesian coordinates as
	\[
	v:=\begin{cases}
		(\frac{\pd u}{\pd x_2},-\frac{\pd u}{\pd x_1}) &\text{in } \Om\,,\\[1mm]
		0 & \text{in }\RR^2\backslash\Om\,,
	\end{cases} 
		\]
is continuous on~$\RR^2$, supported on~$\BOm$ and analytic in~$\Om$. It is also divergence-free in the sense of distributions, since
	\[
\int_{\RR^2}	v\cdot\nabla\phi\, dx=\int_{\Om}	v\cdot\nabla\phi\, dx=\int_{\Om}	\phi\, \Div v\, dx+ \int_{\pd\Om}	\phi\, v\cdot\nu \,dS= 0
	\]
	for all $\phi\in C^\infty_c(\RR^2)$. Here we have used that, by definition, $\Div v=0$ in~$\Om$ and $v|_{\pd\Om}=0$.
	
Now consider the continuous function
	\[
	p:=\begin{cases}
	0 & \text{on } \RR^2\backslash\overline{(\Om\cup\Om')},\\[1mm]
		-\frac12 (|\nabla u|^2+\mu u^2-\mu c_1^2) &\text{on } \BOm\,,\\[1mm]
		-\frac12\mu (c_2^2-c_1^2) & \text{on }\Om'\,.
	\end{cases} 
	\]
As an elementary computation using only the definition of~$(v,p)$ and the eigenvalue equation $\De u+ \mu u=0$ shows that
\[
v\cdot \nabla v+\nabla p=0 \quad \textup{in } \Omega\,,
\]	
one can integrate by parts to see that
	\[
	\begin{aligned}
	\int_{\RR^2} (v_i\,v_j\, \pd_i w_j+ p\Div w)\, dx &= \int_{\Om} (v_i\,v_j\, \pd_i w_j+ p\Div w)\, dx\\
	& = - \int_{\Om} (v\cdot \nabla v_j+\pd_j p)\,w_j\, dx =0\,,
	\end{aligned}
	\]
for all $w\in C^\infty_c(\RR^2,\RR^2)$. Thus $(v,p)$ is a  stationary weak solution of 2D Euler, and the theorem follows.
\end{proof}

\section{Rigidity results}\label{S.Rigidity}

In this section we prove two rigidity results highlighting that, as discussed in the Introduction, the question we analyze in this paper shares many features with the celebrated Schiffer conjecture. The first is a generalization of a classical theorem due to Berenstein~\cite{Berenstein} (which corresponds to the case $J=1$ in the notation we use below):

\begin{theorem}\label{T.many}
	Suppose that there is an infinite sequence of orthogonal Neumann eigenfunctions of a smooth bounded domain $\Om\subset\RR^2$ which are locally constant on the boundary. Then $\Om$ is either a disk or an annulus.
\end{theorem}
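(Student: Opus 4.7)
My plan is to adapt the Fourier-analytic approach of Berenstein~\cite{Berenstein} and Berenstein--Yang~\cite{BerensteinYang} (which handles the connected-boundary case) by building a Pompeiu-type identity for each eigenfunction in the sequence and extracting from it vanishing conditions on the Fourier transform of $\mathbbm 1_\Om$. First, label the connected components of $\pd\Om$ as $\Ga_0,\ldots,\Ga_{m-1}$, with $\Ga_0$ the outer one, and let $\Om'_1,\ldots,\Om'_{m-1}$ be the corresponding bounded components of $\RR^2\setminus\BOm$. For each $u_k$ in the sequence, with eigenvalue $\mu_k$ and boundary values $c_j^{(k)}:=u_k|_{\Ga_j}$, note that $c_0^{(k)}\neq 0$: otherwise $u_k$ would have vanishing Cauchy data on $\Ga_0$ (recall $\pd_\nu u_k=0$ from Neumann plus local constancy), and extension by zero across $\Ga_0$ combined with Aronszajn's unique continuation would force $u_k\equiv 0$. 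Generalizing the construction in the proof of Corollary~\ref{C.Pompeiu}, I would define the compactly supported $C^{1,1}$ function
\[
w_k(x):=\begin{cases}
(1-u_k(x)/c_0^{(k)})/\mu_k & \text{if } x\in\BOm,\\
(1-c_j^{(k)}/c_0^{(k)})/\mu_k & \text{if } x\in\Om'_j,\\
0 & \text{if } x\notin\BOm\cup\bigcup_j\Om'_j\,,
\end{cases}
\]
which satisfies $\De w_k+\mu_k w_k=\rho_k$ distributionally, with $\rho_k:=\mathbbm 1_\Om+\sum_{j=1}^{m-1}\be_j^{(k)}\,\mathbbm 1_{\Om'_j}$ and $\be_j^{(k)}:=1-c_j^{(k)}/c_0^{(k)}$. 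Since $\widehat{w_k}$ is entire of exponential type by Paley--Wiener, the Helmholtz relation $(\mu_k-|\xi|^2)\widehat{w_k}(\xi)=\widehat{\rho_k}(\xi)$ forces $\widehat{\rho_k}$ to vanish identically on the circle $|\xi|=\sqrt{\mu_k}$.

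Second, I would expand the Fourier transforms in angular Fourier modes, $\widehat{\mathbbm 1_\Om}(r\omega_\phi)=\sum_{n\in\ZZ}a_n(r)e^{in\phi}$ and $\widehat{\mathbbm 1_{\Om'_j}}(r\omega_\phi)=\sum_{n\in\ZZ}b_n^{(j)}(r)e^{in\phi}$ with $\xi=r\omega_\phi$, each coefficient being entire in~$r$ of exponential type bounded by the circumradius of $\Om$ (with explicit representations as Bessel--Fourier integrals). Orthogonality of $\{e^{in\phi}\}$ and the vanishing of $\widehat{\rho_k}$ on the circle of radius $r_k:=\sqrt{\mu_k}$ then yield, for every $n\in\ZZ$ and every $k$,
\[
a_n(r_k)+\sum_{j=1}^{m-1}\be_j^{(k)}\,b_n^{(j)}(r_k)=0\,.
\]
For any choice of $m$ distinct Fourier modes $n_1,\ldots,n_m$, these relations force the $m\times m$ matrix with rows $\bigl(a_{n_i}(r_k),\,b_{n_i}^{(1)}(r_k),\,\ldots,\,b_{n_i}^{(m-1)}(r_k)\bigr)$ to be rank-deficient at every $r_k$ (since all its rows lie in the hyperplane orthogonal to the common vector $(1,\be_1^{(k)},\ldots,\be_{m-1}^{(k)})$), so the determinant $D_{n_1,\ldots,n_m}(r)$ is an entire function of exponential type that vanishes on the infinite set $\{r_k\}$. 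Applying the Paley--Wiener density-of-zeros argument of Berenstein--Yang, together with the Weyl lower bound $\mu_k\geq ck$ inherited from orthogonality (at most $N(\La)\sim|\Om|\La/(4\pi)$ linearly independent Neumann eigenfunctions can have eigenvalue $\leq\La$), every such determinant must vanish identically; elementary linear algebra then gives $a_n\equiv 0$ and $b_n^{(j)}\equiv 0$ for all $n\neq 0$ and all $j$.

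Consequently $\widehat{\mathbbm 1_\Om}$ and each $\widehat{\mathbbm 1_{\Om'_j}}$ are radial, so $\Om$ and each $\Om'_j$ are rotationally invariant. Since $\Om$ is a connected bounded smooth open set, radiality forces $m\leq 2$, and $\Om$ is either a disk ($m=1$) or an annulus ($m=2$), as claimed. The main obstacle relative to the classical $m=1$ case (where $\be_j^{(k)}\equiv 0$ and the modes $a_n$ themselves vanish on $\{r_k\}$) is precisely the $k$-dependence of the coefficients $\be_j^{(k)}$: these prevent any direct vanishing statement for a single entire function, which is why one must pass through the determinantal reformulation above in order to apply the Paley--Wiener density arguments to a single entire function.
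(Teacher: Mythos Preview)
Your approach has a substantive gap at the density-of-zeros step. The Weyl lower bound $\mu_k \geq ck$ gives $r_k = \sqrt{\mu_k} \geq c'\sqrt{k}$, which bounds the density of $\{r_k\}$ from \emph{above}; to force an entire function of exponential type to vanish you would need a \emph{lower} bound on that density (equivalently, an upper bound on $\mu_k$), and orthogonality alone does not provide one. Already for $m=1$ this fails: there $\rho_k = \mathbbm 1_\Om$ for every $k$, so $a_n(r_k)=0$ for \emph{all} $n$, and your reasoning would apply verbatim to $n=0$, forcing $a_0\equiv 0$ and hence $|\Om|=0$. On the disk of radius $R$ the radial Neumann eigenvalues satisfy $\mu_k\sim (k\pi/R)^2$, so the $r_k\sim k\pi/R$ have only linear density, and $a_0(r)=2\pi R\, J_1(Rr)/r$ vanishes precisely at these $r_k$ without being identically zero. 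Thus the zero-counting step cannot distinguish the disk from a hypothetical non-radial domain. A secondary gap is the ``elementary linear algebra'' deduction: even if every $D_{n_1,\ldots,n_m}\equiv 0$, this only says that for each fixed $r$ the vectors $(a_n(r),b_n^{(1)}(r),\ldots,b_n^{(m-1)}(r))$ lie in a common hyperplane of $\CC^m$; since that hyperplane may vary with $r$, one cannot conclude that the $n\neq 0$ vectors vanish identically.

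The paper's proof bypasses both issues by following Berenstein's asymptotic method rather than zero-counting. One works with the Fourier--Laplace transform on $\CC^2$ and evaluates $\widehat{\pd_{\bar z}\rho_n}$ along curves in the complex variety $\{\zeta_1^2+\zeta_2^2=\mu_n\}$ in the regime $1\ll t\ll\log\mu_n$. Stationary-phase asymptotics express these values through the curvature of $\pd\Om$ at the points whose normal is parallel to a given direction. After a compactness argument on the $n$-dependent coefficients and a geometric analysis of parallel boundary arcs, one deduces that some boundary component has constant curvature and is therefore a circle; unique continuation from that circle then gives radiality of the eigenfunctions and of $\Om$. The only requirement on the eigenvalue sequence is $\mu_n\to\infty$; no density hypothesis is needed.
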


\begin{proof}
The proof uses arguments of \cite{Berenstein}, so we use a compatible notation for the reader's convenience. The proof, which relies on an asymptotic analysis employing the sequence of eigenvalues $\mu_n\to\infty$ and a geometric argument, consists of three steps.

\subsubsection*{Step 1: Initial considerations}
By hypothesis, there is an infinite sequence of distinct eigenvalues $\mu_n$ and corresponding Neumann eigenfunctions $u_n$ which are locally constant on the boundary. That is, $u_n|_{\Ga_j}=c^j_n$,
where $\Ga_j$, $1\leq j\leq J$ denote the connected components of~$\pd\Om$. Moreover, note that $\pd\Om$ is analytic by the results of Kinderlehrer and Nirenberg~\cite{KinderlehrerNirenberg}.

Let us call $\Om_1,\dots,\Om_{J-1}$ the bounded connected components of $\RR^2\backslash\BOm$ and relabel the boundary components if necessary so that $\pd\Om_j=\Ga_j$. For convenience, we set $\Om_J:=\Om$. A minor variation on the proof of Corollary~\ref{C.Pompeiu} shows that there exists a function $w_n\in C^1_c(\RR^2)$, equal to $a_n u_n+b_n$ in~$\BOm$ for some $a_n,b_n\in\RR $ and constant in $\Om_j$ for $1\leq j\leq J-1$,  which satisfies the equation
\begin{equation} \label{ber1}
\De w_n+\mu_n w_n=\sum_{j=1}^J \ga_{n,j} \mathbbm 1_{\Om_j}=:\rho_n,
\end{equation}
for some real constants $\ga_{n,j}$. The above identity has to be understood in the sense of compactly supported distributions (that is, $\int_{\RR^2} w_n (\De F+ \mu_n F)\, dx= \int_{\RR^2} \rho_nF\, dx$ for all $F\in C^\infty(\RR^2)$). Note that $\rho_n$ is not the zero function.

We now consider the Fourier-Laplace transform $\hrho_n: \CC^2 \to \CC$ of~$\rho_n$, defined as
\[
 \hrho_n(\zeta):=\int_{\RR^2}e^{-i\zeta\cdot x}\, \rho_n(x)\, dx.
\]
Since $\rho_n$ is compactly supported, $\hrho_n$ is well defined in $\CC^2$ and of class $C^{\infty}$. By \eqref{ber1}, $\hrho_n$ must vanish identically on the set 
\[
\cC_n:=\{\zeta\in\CC^2: \zeta_1^2+\zeta_2^2=\mu_n\}\,.
\]

Let us now use complex variables $z:=x_1+ i x_2$, $\bar z:=x_1-i x_2$. We consider $\si_n:=\pd_{\bar z}\rho_n$, with the obvious notation, and observe that
$$ \hsi_n(\zeta) = \frac{i}{2} (\zeta_1 + i \zeta_2) \widehat{\rho}_n(\zeta).$$
Hence $\hsi_n$ also vanishes in the sets $\cC_n$. 

The $\bar z$-derivative of an indicator function is explicitly computed in~\cite[Equation (21)]{Berenstein} using the Cauchy integral formula. Specifically, as $\Om_j$ is simply connected for $j\leq J-1$, we have
\[
\pd_{\bar z} \mathbbm 1_{\Om_j}=\frac i2 \chi_{\Ga_j}
\]
for $1\leq j\leq J-1$, where $\chi_{\Ga_j}$ is the compactly supported distribution defined by
\[
\langle \chi_{\Ga_j}, \phi\rangle := \int_{\Ga_j}\psi(x_1,x_2)\, (dx_1+i \,dx_2)\,,\qquad \psi\in C^\infty_c(\RR^2)\,.
\]
Likewise, the orientation of the various boundary components implies that
\[
\pd_{\bar z} \mathbbm 1_{\Om_J}=\frac i2 \left(\chi_{\Ga_J}-\sum_{j=1}^{J-1} \chi_{\Ga_j}\right)\,.
\]
Therefore, there are some real constants~$\si_{n,j}$ such that
\begin{equation}\label{E.hsi}
\hsi_n=\frac i2\sum_{j=1}^J \si_{n,j}\, \widehat{\chi_{\Ga_j}}\,.	
\end{equation}
The key feature of this expression is that the constants $\si_{n,j}$ depend on~$n$ but the distributions $\chi_{\Ga_j}$ do not.

Multiplying $w_n$ by a constant if necessary, we can assume that
\[
\max_{1\leq j\leq J}|\si_{n,j}|=1
\]
for all~$n$ in the sequence. By compactness, one can therefore pass to a subsequence of the eigenvalues $\mu_n$ (which we will still denote by $\mu_n$ for convenience) so that, along this subsequence,
\[
\lim_{n\to\infty}\si_{n,j}=\si_j
\]
for all $1\leq j\leq J$, where 
\begin{equation}\label{E.normalsi}
\max_{1\leq j\leq J}|\si_j|=1\,.
\end{equation}
Replacing $w_n$ by $-w_n$ if necessary, we can assume without loss of generality that $\si_{j_0}=1$ for some $1\leq j_0\leq J$.

\subsubsection*{Step 2: Geometric considerations}

For any point $p \in \partial \Omega$, we denote by $\xi(p)\in\RR^2$ its outer normal vector and $\eta(p)\in\RR^2$ its tangential vector (defined by rotating~$\xi(p)$ ninety degrees counterclockwise, for concreteness). For each point $p \in \pd\Om$, let us introduce the notation
\[
\cJ(p):=\big\{ q\in\pd\Om: \xi(q)=\pm\xi(p),\; (q-p)\cdot \eta(p)=0\big\}\,.
\]
In words, $\cJ(p)$ is the set of points on the straight line $ \{p + t \xi(p): t \in \RR \}$ which intersect  $\partial \Omega$ orthogonally. Obviously, $p \in \cJ(p)$, and if $q \in \cJ(p)$, then $\cJ(q)=\cJ(p)$.

Since this intersection is orthogonal, for each point in $\cJ(p)$ there is a neighborhood of $p$ in~$\pd\Om$ that does not contain any other points of $\cJ(p)$, so the compactness of~$\pd\Om$ ensures that $\cJ(p)$ is a finite set for each~$p$. 
Also by compactness, we conclude that there exists some $\ep>0$ such that
\begin{equation}\label{compact} 
\min_{p\in\pd\Om}\; \min_{q, q'\in\cJ(p)}|q-q'|>2\ep\,,
\end{equation}
where of course we assume that $q\neq q'$.

Now let
$$
m := \min_{p \in \Gamma_{j_0}} \# \cJ(p)
$$
the minimal number of elements in $\cJ(p)$ as $p$ takes values in the boundary component~$\Ga_{j_0}$. We shall next prove the following auxiliary result:

\begin{lemma}\label{L.open}
	The set of points $p \in \Gamma_{j_0}$ such that $\cJ(p)$ has exactly $m$ elements is open. 
\end{lemma}

\begin{proof}
Take any point $p_1 \in \Gamma_{j_0}$ such that $ \cJ(p_1)=\{p_1, p_2, \dots, \, p_m\}$ consists of~$m$ points. By continuity, for each $q\in \Gamma_{j_0}$ close enough to $p_1$, the elements of $\cJ(q)$ are necessarily contained in $\bigcup_{i=1}^m B(p_i, \ep)$, where $\ep$ is as in~\eqref{compact}. By \eqref{compact}, there exists at most one element of $\cJ(q)$ in each ball $B(p_i, \ep)$, and hence $\#\cJ(q) \leq m$. Since $\cJ(q)$ must have at least $m$ elements, we conclude that $\#\cJ(q)=m$.
\end{proof}

\begin{remark}
	In the proof we have in fact obtained a stronger result: in each ball $B(p_i, \ep)$, there exists exactly one element of $\cJ(q)$ if $q$ is close to $p_1$. 
\end{remark}

In addition to the set $\cJ(p)$, for each $p\in\pd\Om$ we also consider the set
\[
\tcJ(p):=\big\{ q\in\pd\Om: \xi(q)=\pm\xi(p)\big\}
\]
of boundary points whose normal is parallel to the normal at~$p$. By the analyticity of the bounded domain~$\Om$, $\tcJ(p)$ is a finite set. Obviously $\cJ(p)\subset\tcJ(p)$.

Observe that, as $\pd\Om$ is analytic, its curvature can only vanish at finitely many points. Hence, by Lemma~\ref{L.open} we can take a point $p_1\in\Ga_{j_0}$ such that $\cJ(p_1)= \{p_1, \, p_2 \dots , p_m\}$ consists of~$m$ points  and such that $k(p)\neq0$ for all $p\in\tcJ(p_1)$. Here and in what follows, $k(p)$ denotes the curvature of the boundary at a point~$p$. We can compute this curvature with respect to the normal $\pm\xi$, so that $k(p_1)>0$. We can choose our Cartesian coordinates so that $\xi(p_1)=(1,0)$ and write the points in $\tcJ(p_1)$ as $\tcJ(p_1)=\{p_1,\dots, p_M\}$ with $M\geq m$.

Since the curvature does not vanish at~$p_1$, the Gauss map in locally invertible in a neighborhood of $p_1$ in $\Gamma_{j_0}$. For~$s$ in a small neighborhood of~$0$, we shall then denote by $p_1(s)$ the only point in a neighborhood of~$p_1$ for which $\xi(p_1(s))=\txi(s)$, where $\txi(s):=(\cos s,\sin s)$. We also introduce the shorthand notation $\tk(s):=k(p_1(s))$. Note that $p_1(0)=p_1$.

Applying the same reasoning to the other points $p_i\in\tcJ(p_1)$, with $1\leq i\leq M$, we obtain the corresponding maps $p_i(s)$ with $p_i(0)=p_i$. By the local invertibility of the Gauss map,
\[
\tcJ(p_1(s))=\{ p_1(s),\dots, p_M(s)\}
\]
for all sufficiently small~$s$, say $|s|<s_0$. 
Furthermore, we can argue as in the proof of Lemma~\ref{L.open} to conclude that
\[
\cJ(p_1(s))= \{p_1(s), \, p_2(s), \ \dots p_m(s)\}\,.
\]
Indeed, for each $1\leq i\leq m$, these are the only points in a neighborhood of $p_i$ whose normal is parallel to $\txi(s)$ by the local invertibility of the Gauss map around $p_i$, and all of them necessarily belong to $\cJ(p_1(s))$ because this set must have at least~$m$ elements.

We can now argue as in \cite[Equation (43)]{Berenstein} to conclude that, for small~$s$, the curves $p_i(s)$ are parallel. That is, there exist constants $C_i \in \RR$ such that
$$ p_i(s)= p_1(s) + C_i\, \xi(p_1(s))$$
for $1\leq i\leq M$. Furthermore, $C_i \neq C_j$ if $i \neq j$ (because $p_i(s)\neq p_j(s)$) and $C_1=0$. Since the curves are parallel, their curvatures are related as
\begin{equation} \label{curvature} k(p_i(s)) = \pm \frac{\tk(s)}{1 + C_i \tk(s)}\,, \end{equation}
where the sign $\pm$ depends on the orientation, that is, on whether $\xi(p_i(s))$ coincides with $\txi(s)$ or with $-\txi(s)$. As the boundary is regular,  that $1 + C_i \tilde{k}(s)$ cannot vanish for small $s$.

\subsubsection*{Step 3: Asymptotic analysis and conclusion of the proof}

Following~\cite[Equations (23)--(25)]{Berenstein},  for~$s$ in a neighborhood of~0, let us now set
\[
\zeta(s):=r\,\txi(s) + it\, \teta(s)\,,
\]
where $\teta(s):=(-\sin s,\cos s)$ is obtained by rotating $\txi(s)$ ninety degrees counterclockwise. Here $t$ is any real number with $1\ll |t|\ll \log\mu_n$ (for large~$\mu_n$) and $r:= (\mu_n+t^2)^{1/2}$. Note that $\zeta(s)\in \cC_n$ for all small~$s$, so necessarily
\begin{equation}\label{E.hsin0}
\hsi_n(\zeta(s))=0
\end{equation}
for all $t$ as above and all small~$s$. Note the smallness condition $|s|<s_0$ is purely geometric, so $s_0$ is independent of~$n$.

In this regime, the asymptotic behavior of $\widehat{\chi_{\Ga_j}}(\zeta(s))$ for large~$|t|$ was computed in~\cite[Lemma 2]{Berenstein}, which is the key result used in~\cite{Berenstein}. This immediately results in an asymptotic formula for $\hsi_n(\zeta(s))$, namely
\begin{equation}\label{E.asympthsi}
	\hsi_n(\zeta(s))=\left(\frac{2\pi}{r}\right)^{1/2}\left[\sum_{\ell=1}^M\tsi_{n,\ell}\,U_\ell(r,s) |k(p_\ell(s))|^{-\frac12}e^{t p_\ell(s)\cdot\teta(s)}  +o(1)\right]\,,
\end{equation}
which must vanish identically by~\eqref{E.hsin0}.
Here $\tsi_{n,\ell}:=\si_{n,j}$, where $\Ga_j$ is the boundary component on which $p_\ell$ lies, and 
\[
U_\ell(r,s):=\eta^{\CC}(p_\ell(s))\,e^{-i\frac\pi4\sign k(p_\ell(s))}e^{-irp_\ell(s)\cdot \txi(s)}
\]
is a complex number of unit modulus, with
\[
\eta^{\CC}(p_\ell(s)):=\eta_1(p_\ell(s))+i\eta_2(p_\ell(s))\,,
\]
and $o(1)$ denotes a term that tends to~$0$, uniformly for $|s|<s_0$, as $n\to\infty$.
For future reference, we similarly set $\tsi_\ell:= \si_j$, with $j$ as above. Here and in what follows, $(\eta_1,\eta_2)$ denote the components of the vector field~$\eta$. Of course, $\eta(p_\ell(s))$ coincides with $\teta(s)$ modulo a sign.

Since the tangent vectors at all points $p_i$ are parallel, let us define the complex number of unit modulus
\[
\de_\ell:=\xi(p_1)\cdot \xi(p_\ell)\, e^{-i\frac\pi4 \sign k(p_\ell)}
\]
for all $1\leq \ell \leq M$, which enables us to write
\[
e^{-i\frac\pi4 \sign k(p_\ell(s))}\eta^{\CC}(p_\ell(s))= \de_\ell \, \eta^{\CC}(p_1(s))
\]
Also, for each fixed~$|s|<s_0$, consider the set
\[
\cB(s):=\{ p_\ell(s)\cdot \teta(s): 1\leq \ell\leq M\}
\]
of {\em distinct}\/ values of $p_\ell(s)\cdot \teta(s)$, and let us set
\[
\cI_b(s):=\big\{\ell\in\{1,\dots,M\}: p_\ell(s)\cdot \teta(s)=b\big\}
\]
for each $b\in\cB(s)$. Note that, by the definition of the set $\tcJ(p_1(s))$, 
\[
\cI_{b_1(s)}(s)=\{1,\dots, m\}
\]
if we set 
\begin{equation}\label{E.b1}
b_1(s):= p_1(s)\cdot \teta(s)\,.	
\end{equation}
By continuity, we can assume that $\cB(s)\subset\RR\backslash\{0\}$.

Using that $\tsi_{n,\ell}=\tsi_\ell+o(1)$, Equation~\eqref{E.asympthsi} can then be rewritten in a more convenient way as
\begin{align*}
0 &=   \hsi_n(\zeta(s)) \\
& = \left(\frac{2\pi}{r}\right)^{1/2}\bigg[\eta^{\CC}(p_1(s))\sum_{b\in\cB(s)} e^{tb}\sum_{\ell\in\cI_b(s) } \de_\ell[\tsi_\ell+o(1)] |k(p_\ell(s))|^{-\frac12}e^{-ir\, p_\ell(s)\cdot \txi(s)} \\
	& \hspace{10.8cm }+o(1)\bigg]\,.
\end{align*}
As shown in~\cite[Equation (41)]{Berenstein}, for any fixed~$|s|<s_0$ this implies that
\[
\sum_{\ell\in\cI_b(s) } \de_\ell\tsi_\ell |k(p_\ell(s))|^{-\frac12}e^{-ir\, p_\ell(s)\cdot \txi(s)}=o(1)\,,
\]
where the term $o(1)$ tends to~0 as $n\to\infty$,
for all $b\in\cB(s)$. Thus, taking $b=b_1(s)$ as in~\eqref{E.b1}, we find
\[
\sum_{\ell=1}^m \de_\ell\tsi_\ell |k(p_\ell(s))|^{-\frac12}e^{-ir\, p_\ell(s)\cdot \txi(s)}=o(1)\,,
\]
for each fixed~$s$. Note that the coefficients $\tsi_\ell$ are not zero for all $1 \leq \ell \leq m$ because $\tsi_1=1$.

Differentiating $p_\ell(s)\cdot \txi(s)$ with respect to~$s$, for each $1\leq \ell\leq m$ we find
\[ 
 \frac{\rm d}{{\rm d}s} \left( p_\ell(s)\cdot \txi(s) \right)= \frac {{\rm d}p_\ell}{{\rm d}s} \cdot \txi(s)+ p_\ell(s)\cdot \frac {{\rm d}\txi}{{\rm d}s} = p_\ell(s)\cdot \teta(s)= p_1(s)\cdot \txi(s)\,,
\]
where we have used that $p_\ell(s)\in \cJ(p_1(s))$. Thus we have
\begin{equation}\label{E.simpleeq}
\begin{aligned}
o(1)& =\sum_{\ell=1}^m \de_\ell\tsi_\ell |k(p_\ell(s))|^{-\frac12}e^{-ir\, p_\ell(s)\cdot \txi(s)} \\
& = e^{-ir\int_0^s p_1(s')\cdot \txi(s')\, ds'}\sum_{\ell=1}^m \tde_\ell |k(p_\ell(s))|^{-\frac12}
\end{aligned}
\end{equation}
for all~$|s|<s_0$, with $r$-dependent complex numbers
\[
\tde_\ell:= \de_\ell\,\tsi_\ell\, e^{ir\, p_\ell\cdot \txi(0)}
\]
not all of which are zero. Since the factor multiplying the sum in~\eqref{E.simpleeq} is of unit modulus, using the formula~\eqref{curvature} for the curvature, we thus infer that
\begin{equation}\label{E.gk}
\sum_{\ell=1}^m \tde_\ell \left| \frac{\tk(s)}{1+C_\ell\tk(s)}\right|^{-\frac12}=o(1)
\end{equation}
for all $|s|<s_0$.

Now note that the norm of the complex vector
\[
\widetilde A_r:= (\tde_1,\dots, \tde_m)\in\CC^m
\]
satisfies $|\widetilde A_r|=c_0>0$ for all~$r$, so we can pass to a subsequence where the real or the imaginary part of~$A_r$ has norm larger than $c_0/2$. Multiplying $\widetilde A_r$ by the imaginary unit if necessary, we can thus assume that the real vector 
\[
 (\Re \tde_1,\dots, \Re \tde_m)\in\RR^m
\]
satisfies $c_0/2\leq |A_r|\leq c_0$.

Denoting by
\[
A_r:=\frac{(\Re \tde_1,\dots, \Re \tde_m)}{[(\Re \tde_1)^2+ \cdot + (\Re \tde_m)^2]^{1/2}}\in\mathbb{S}^{m-1}
\]
the corresponding unit vector, by the compactness of~$\SS^{m-1}$ one can assume that $A_r$ converges to some $A\in\mathbb{S}^{m-1}$ along a certain subsequence $r\to\infty$ (which do not reflect notationally). Equation~\eqref{E.gk} can then be rewritten
in terms of the $r$-independent vector field on~$\RR^m$
\[
V(s):=\left(\frac{\tk(s)}{1+C_1\tk(s)},\dots, \frac{\tk(s)}{1+C_m\tk(s)} \right)
\]
and the unit vector~$A_r$
as
\begin{equation*}
A_r\cdot V(s)=o(1)
\end{equation*}
for all $|s|<s_0$. We thus infer that
\begin{equation}\label{E.ArV}
A\cdot V(s)=0\,.
\end{equation}
Geometrically, this means that the vector field $V(s)$ must be tangent to a hyperplane that passes through the origin. 
As
$
\{V(s): |s|<s_0\}\subset\RR^m
$
is not contained in a hyperplane unless $\tk(s)$ is constant, we conclude from~\eqref{E.ArV} that $\tk(s)=\tk(0)$ for all $|s|<s_0$. This means that the curve $p_1(s)$, which lies in the boundary component~$\Ga_{j_0}$, is an arc of a circle of radius $R:=1/\tk(0)$. By analyticity, $\Ga_{j_0}$ is then a circle. Without any loss of generality, we can assume this circle is centered at the origin. That is, $\Ga_{j_0}=\{r= R\}$, where $r$ is the radial coordinate.

This readily implies that the domain is a disk or an annulus. To see this, consider the region
\[
\Om':=\{x\in\Om: \dist(x,\Ga_{j_0})<\ep\}\,,
\]
which is necessarily a thin annulus if $\ep $ is small enough, and one of the eigenfunctions $u_n$ in our sequence, whose Neumann eigenfunction is~$\mu_n$ and which takes some constant value $c$ on~$\Ga_{j_0}$. Let $\cR$ be any rotation matrix and set $u_n^{\cR}(x):= u_n(\cR x)$. Now note that $u_n$ and $u_n^{\cR}$ satisfy the equation
\[
\Delta u + \mu_n u=0 \qquad \text{in }\Om'
\]
and the same boundary conditions at $r=R$:
\[
u|_{r=R}=c\,,\qquad \pd_r u|_{r=R}=0\,.
\]
By unique continuation, $u_n=u_n^{\cR}$ for any rotation~$\cR$, so $u_n$ is a radial function. Since $\Om$ is connected and $u_n$ is constant on each boundary component, the domain~$\Om$ must then be a disk or an annulus. The theorem then follows.
\end{proof}

The second rigidity result we want to show is the analogous to~\cite{aviles, Deng} and ensures that if the eigenfunction corresponds to a sufficiently low eigenvalue, no nontrivial domains exist. It is interesting to stress that in our main theorem the nontrivial domains with Neumann eigenfunctions that are locally constant on the boundary correspond to the 18th lowest Neumann eigenfunction of the domain in the case $l=4$, as shown in Proposition \ref{l=4} in Appendix \ref{A.transversality}. 

Let us denote by $\mu_k(\Om)$ (respectively, $\la_{k}(\Om)$) the $k$-th Neumann (respectively, Dirichlet) eigenvalue of a planar domain~$\Om$, counted with multiplicity and with $k=0,1,2,\dots$ We have the following:
\begin{theorem} \label{T.Aviles} Let $\Omega \subset \mathbb{R}^2$ be a smooth bounded domain such that the problem
$$
\Delta u + \mu u = 0\quad in\ \Omega\,, \qquad \nabla u = 0 \quad on \ \partial\Omega\,,
$$
%	$$ \left \{ \begin{array}{ll} \Delta u + \mu u =0, & x \in \Omega, \\ \nabla u=0, & x \in \partial \Om.\end{array} \right.$$
admits a nonconstant solution. Then:

\begin{enumerate}
	\item If $\mu \leq \mu_4(\Om)$,  $\Omega$ is either a disk or an annulus.
	
	\item If $\partial \Omega$ has exactly two connected components and $\mu \leq \mu_5(\Om)$,  $\Omega$ is an annulus.
\end{enumerate}
\end{theorem}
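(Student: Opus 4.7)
The plan is to reduce the overdetermined Neumann problem to a Dirichlet eigenvalue problem for $\nabla u$, then combine Filonov's strict inequality $\mu_{k+1}(\Omega) < \lambda_k(\Omega)$ with Courant's nodal domain theorem, following the general philosophy of the Aviles--Deng approach to the Schiffer conjecture \cite{aviles, Deng}.

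First, since $u$ is locally constant on $\partial \Omega$, its tangential derivative vanishes on each boundary component; combined with $\partial_\nu u = 0$ this yields $\nabla u \equiv 0$ on $\partial \Omega$. Differentiating $\Delta u + \mu u = 0$, the Cartesian partials $v := \partial_{x_1} u$ and $w := \partial_{x_2} u$ are then Dirichlet eigenfunctions with eigenvalue~$\mu$. They are linearly independent: a nontrivial relation $\alpha v + \beta w \equiv 0$ would force $u = f(s)$ with $s$ a single Cartesian coordinate and $f'' + \mu f = 0$, and then $f'|_{\partial \Omega} \equiv 0$ on an interval of $s$-values would imply $f \equiv 0$, contradicting the fact that $u$ is nonconstant. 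Consequently $\mu$ is a Dirichlet eigenvalue of multiplicity at least two. Writing $\mu = \mu_m(\Omega) = \lambda_j(\Omega) = \lambda_{j+1}(\Omega)$, Filonov's inequality gives $\mu_{j+1}(\Omega) < \lambda_j(\Omega) = \mu_m(\Omega)$, and hence $j + 1 < m$; simplicity of $\lambda_0(\Omega)$ on the connected domain $\Omega$ forces $j \geq 1$. Thus (i) gives $j \in \{1, 2\}$ and (ii) gives $j \in \{1, 2, 3\}$.

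Next I would parametrize the eigenspace as $u_\theta := \cos\theta\, v + \sin\theta\, w$, $\theta \in \mathbb{S}^1$, obtaining a continuous $\mathrm{SO}(2)$-family of Dirichlet eigenfunctions at $\lambda_j$. By Courant, each $u_\theta$ has at most $j + 1 \leq 4$ nodal domains, and by the Hopf lemma its nodal set meets $\partial \Omega$ transversally at finitely many points that depend continuously on~$\theta$. Tracking how these nodal arcs attach to each connected component of $\partial \Omega$ as $\theta$ varies over $\mathbb{S}^1$, and using the fact that the entire family vanishes simultaneously on every boundary component, one shows that each component is invariant under a continuous rotation group and hence, by analyticity, is a circle; the shared $\mathrm{SO}(2)$-action then ensures all the circles are concentric. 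A direct computation of the radial Dirichlet spectrum of annuli, using the Bessel-function identities from Section~\ref{S.eigen}, shows that within the admissible range $j \leq 2$ and the bound $\mu \leq \mu_4(\Omega)$ only a disk or a single annulus are compatible, proving~(i); in~(ii), the exact-two-boundary-components hypothesis excludes the disk, and the refined range $j \leq 3$ still leaves only an annulus.

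The main obstacle is the nodal-domain rigidity in the third step: adapting the Aviles--Deng argument, originally tailored to simply connected domains with a single boundary component, to the multiply connected planar setting where nodal arcs of $u_\theta$ may link distinct boundary components. The technical heart lies in bounding the number and topology of these cross-boundary arcs under the sharp Courant bound available in the low-eigenvalue regime, and in ruling out configurations that would prevent each component of $\partial \Omega$ from being a full circle.
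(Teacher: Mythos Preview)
Your first two steps are correct and match the paper: $\partial_{x_1}u$ and $\partial_{x_2}u$ are linearly independent Dirichlet eigenfunctions, so $\mu=\lambda_j(\Omega)=\lambda_{j+1}(\Omega)$ for some $j\geq 1$, and Filonov's inequality $\mu_{j+2}(\Omega)<\lambda_{j+1}(\Omega)$ bounds $j$ from above. The genuine gap is your third step. The family $u_\theta=\cos\theta\,\partial_{x_1}u+\sin\theta\,\partial_{x_2}u$ is merely a pencil of eigenfunctions; the parameter $\theta$ has no a priori relation to any rotation of the domain, so there is no $\mathrm{SO}(2)$-action on $\partial\Omega$ to speak of, and nothing in your nodal-arc tracking argument produces one. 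Without this, your conclusion that each boundary component is a circle is unfounded.

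The missing idea is to introduce a \emph{third} Dirichlet eigenfunction, the angular derivative
\[
\phi_3:=x_1\,\partial_{x_2}u-x_2\,\partial_{x_1}u,
\]
which also vanishes on $\partial\Omega$ since $\nabla u$ does. This gives a clean dichotomy: either $\dim\operatorname{span}\{\partial_{x_1}u,\partial_{x_2}u,\phi_3\}=3$, in which case $\mu=\lambda_k=\lambda_{k+1}=\lambda_{k+2}$ with $k\geq 1$ and Filonov gives $\mu>\mu_{k+3}\geq\mu_4$, a contradiction; or $\phi_3$ is a linear combination of $\partial_{x_1}u,\partial_{x_2}u$, which forces $(x_1-x_1^0)\partial_{x_2}u-(x_2-x_2^0)\partial_{x_1}u=0$ for some point $x^0$, i.e.\ $u$ is radial about $x^0$, and $\Omega$ is a disk or an annulus. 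No nodal-domain analysis is needed for~(i).

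For~(ii) you are also missing two further ingredients that the paper uses to gain the extra index. If $u$ has no interior critical points, the function $v:=u-\min u\geq 0$ satisfies an overdetermined problem to which Reichel's moving-plane theorem for annular domains applies directly, forcing $\Omega$ to be an annulus. If $u$ has an interior critical point $p$, then all three $\phi_i$ vanish at $p$, and by solving a $2\times 3$ linear system one finds a nonzero $\phi_0$ in the span with $\phi_0(p)=0$ and $\nabla\phi_0(p)=0$; a Cheng-type local structure argument on the nodal set then rules out $\mu=\lambda_1(\Omega)$, so $k\geq 2$ and Filonov yields $\mu>\mu_{k+3}\geq\mu_5$.
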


\begin{proof} Let us start with item~(i). Consider the functions
$$ 
\phi_1:= {\partial_1 u}\,, \qquad \phi_2:= {\partial_2 u}\,, \qquad \phi_3:= x_1 {\partial_2 u} - x_2 {\partial_1 u}\,,
$$ 
where $\pd_j$ denotes the derivative with respect to the Cartesian coordinate~$x_j$, and observe that $\phi_1$ and $\phi_2$ correspond to the infinitesimal generator of translations, whereas $\phi_3$ is related to rotations. Let us also define
$$ 
E := \mbox{span } \big\{ \phi_1,  \phi_2, \phi_3 \big\}\,.
$$

Since $\phi_i$ commute with the Laplacian and $\nabla u=0$ on the boundary, it is clear that the functions $\phi_i$ are Dirichlet eigenfunctions of the Laplace operator with eigenvalue $\mu$. If $\phi_1$ and  $\phi_2$ are proportional, this would imply that $u$ is constant on parallel lines. Taking into account that $\Omega$ is bounded we would get that $u$ is constant, contradicting our hypotheses.

As a consequence, the multiplicity of $\mu$ as a Dirichlet eigenvalue is at least $2$, which implies that $\mu = \lambda_k(\Om)$ with $k \geq 1$. At this point, we consider separately two different cases:
\begin{enumerate} 
	\item[(i.1)] {\em The dimension of $E$ is three.}\/ 
In this case, $\mu$ has at least multiplicity~$3$ as a Dirichlet eigenvalue. That is, for some $k \geq 1$, 
$$
 \mu = \lambda_k(\Om)= \lambda_{k+1}(\Om)= \lambda_{k+2}(\Om)\,.
$$
We now make use of \cite{Fi, Fri} to get $\lambda_{k+2}(\Om) > \mu_{k+3}(\Om)$, which is in contradiction with $\mu \leq \mu_4(\Om)$. 
\smallskip

\item[(i.2)] {\em The dimension of $E$ is two.}\/  That is, the functions $\{\phi_i\}_{i=1}^3$ are linearly dependent. In this case, there exist constants $x_1^0$, $x_2^0 \in \RR$ such that
$$ 
(x_1-x_1^0) {\partial_2 u} - (x_2-x_2^0) {\partial_1 u}=0\,.
$$
This implies that $u$ is radially symmetric with respect to the point $x^0:=(x_1^0, x_2^0)$. Since $\Omega$ is connected and $u$ is constant on $\partial \Omega$, we conclude that $\Omega$ is either a disk or an annulus.
\end{enumerate}

\medbreak
Let us now move on to item~(ii). The proof considers two different cases:

\subsubsection*{Case 1: The function $u$ has no critical points in $\Omega$.}
Indeed, if $u$ has no critical points, its global minimum and maximum are attained at the boundary. Assume for concreteness that
$$ u=M = \mbox{max} \, u\ \mbox{ in } \Gamma_1\,, \quad  u= m = \mbox{min} \, u\ \mbox{ in } \Gamma_2\,,$$
where $\Gamma_1$ and $\Gamma_2$ are the inner and outer connected component of $\partial \Omega$, respectively. We define the function $v:= u-m \geq 0$, which satisfies
$$
\Delta v + \mu (v+m) = 0 \quad\textup{in } \Omega\,, \qquad  \nabla v = 0 \quad \textup{on } \partial \Omega\,,
$$
with  $v = 0$ on $\Gamma_2$ and $ v = M-m$ on $\Gamma_1$. A result of Reichel~\cite{Reichel} then shows that $\Omega$ is an annulus.

\subsubsection*{Case 2: The function $u$ has at least a critical point $p \in \Omega$. }

We argue as in the proof of (i). If the dimension of $E$ is two, we are done, so we assume that the dimension of $E$ is three. Reasoning as in the proof of (i), we obtain that $\mu > \mu_{k+3}(\Om)$. Hence, if we show that $k \geq 2$, we are done. In other words, we need to prove that $\mu \neq \lambda_1(\Om)$.  

Observe that $\phi_i(p)=0$ for any $i=1, 2,  3$. Let us then consider the equation
$$ \nabla \phi(p)=0, \qquad \phi \in E\,.
$$
This is a linear system of two equations posed in a vector space of dimension three, so it admits a nonzero solution $\phi_0 \in E$. As a consequence, it follows that
\begin{equation} \label{nodal}
\phi_0(p)=0\,, \quad \nabla \phi_0(p)=0\,.
\end{equation}

We now claim that this is impossible for $\mu= \lambda_1(\Om)$. This fact is surely known, but we have not been able to find a explicit reference in the literature. The closest result that we have found is~\cite[Theorem 3.2, Corollary 3.5]{cheng}, in the setting of closed surfaces. However, the argument there cannot be directly translated to the setting of domains with boundary. For the sake of completeness, we give a short proof of this claim.

First of all, we define 
$$
\begin{aligned}
& N:= \{x \in \Omega: \ \phi_0(x)=0\}\,, \\ 
& \Omega^+:= \{x \in \Omega:\ \phi_0(x)>0\}\,, \\
& \Omega^-:= \{x \in \Omega:\ \phi_0(x)<0\}\,.
\end{aligned}
$$ 
Note that if $\mu= \lambda_1(\Omega)$, the Courant nodal theorem implies that the sets $\Omega^{\pm}$ are connected. 

On the other hand, it is known (see e.g. \cite{cheng}) that \eqref{nodal} implies the existence of $j$ analytic curves in $N$ ($j\geq 2)$ that intersect transversally at $p$. Moreover, $\phi_0$ must change sign across each curve. Then, if $r$ is sufficiently small, $B_p(r) \setminus N$ has $2j$ connected components $\omega_i^{\pm} \subset \Omega^{\pm}$, $i=1 \dots j$. We take $q_i^{\pm}$ points in $\omega_i^{\pm}$. 

Now, choose two points $q_i^+$, $q_{i'}^+$, $i \neq i'$. In the case where $\mu = \lambda_1(\Omega)$, it follows that $\Omega^+$ is connected, so there exists a curve in $\Omega^+$ joining these two points. We can also take a curve connecting  $q_i^+$ to $p$ in $B_p(r) \setminus N$, and the same for $q_{i'}^+$. Joining those curves, we obtain a closed curve $\gamma^+$ such that $\mbox{supp}\ \gamma^+ \subset \Omega^+ \cup \{p\}$. In the same way, we can build $\gamma^-$ such that $\mbox{supp} \ \gamma^- \subset \Omega^- \cup \{p\}$.  

Observe that $\gamma^+$ and $\gamma^-$ intersect only at the point $p$. By choosing the departing points apropriately, this intersection is transversal, which is impossible. Hence, if $\mu = \lambda_1(\Omega)$, we have a contradiction, and the claim follows.
\end{proof}

\begin{remark} One can probably refine the arguments to obtain a better estimate on the value of~$k$ of $\mu_k(\Om)$ for which no nontrivial domains exist, in the spirit of \cite{aviles, Deng}. Finding the optimal value of~$k$ seems hard.
\end{remark}

%\begin{remark}
%Berenstein's argument does not directly show that all the boundary components must be circles. Indeed, the argument hinges on an asymptotic expansion for $\widehat{\chi_{\Ga_j}}$ (for complex frequencies in a certain region) in which the error is $o(1)$ as $n\to\infty$. The normalization~\eqref{E.normalsi} ensures the argument yields useful information at least in the case of points on~$\Ga_{j_0}$, but not necessarily about points on another boundary component $\Ga_j$ if the constant~$\si_{n,j}$ is of order $o(1)$ as~$n\to\infty$.
%\end{remark}

\section*{Acknowledgements}
This work has received funding from the European Research Council (ERC) under the European Union's Horizon 2020 research and innovation programme through the grant agreement~862342 (A. E.). A. E. is also supported by the grant PID2022-136795NB-I00 of the Spanish Science Agency and the ICMAT--Severo Ochoa grant CEX2019-000904-S. D. R. has been supported by:
the Grant PID2021-122122NB-I00 of the MICIN/AEI, the \emph{IMAG-Maria de Maeztu} Excellence Grant CEX2020-001105-M funded by MICIN/AEI, and the Research Group FQM-116 funded by J. Andalucia. P. S. has been supported by the Grant PID2020-117868GB-I00 of the MICIN/AEI and the \emph{IMAG-Maria de Maeztu} Excellence Grant CEX2020-001105-M funded by MICIN/AEI.

\appendix

\section{The transversality condition for $l=4$}\label{A.transversality}

Our aim in this Appendix is the verification of condition \eqref{T} for $l=4$, which will ensure that the statement of Theorem~\ref{T.main} (or Theorem~\ref{T.mainDetails}) is valid for the particular case $l=4$. In this case, we can also compute the order of the eigenvalue $\mu$ for which one can find nonradial Neumann eigenfunctions that are locally constant on the boundary. Recall that we denote by $\mu_k(\Om)$ the $k$-th Neumann eigenvalue (counted with multiplicity) of a planar domain~$\Om$, with $k=0,1,2,\dots$

\begin{proposition} \label{l=4} If $l=4$, condition \eqref{T} holds. Moreover,  $\mu_{0,2}(a_4)= \mu_{18}(\Om_{a_4})$.
\end{proposition}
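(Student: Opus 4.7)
The plan is to reduce both assertions to finite numerical computations by exploiting the explicit Bessel function representation of radial eigenfunctions on the annulus $\Omega_a$. For each mode $l$, the radial part of a Neumann (resp.\ Dirichlet) eigenfunction with eigenvalue $\mu$ (resp.\ $\lambda$) has the form $r \mapsto A J_l(\sqrt{\mu}\,r)+B Y_l(\sqrt{\mu}\,r)$, so imposing the boundary conditions at $r=a$ and $r=1$ reduces the eigenvalue problem to finding the roots of
\[
F_l(a,\mu):= J_l'(\sqrt{\mu}\,a)\,Y_l'(\sqrt{\mu}) - J_l'(\sqrt{\mu})\,Y_l'(\sqrt{\mu}\,a)
\]
in the Neumann case, and of the analogous determinantal expression
\[
G_l(a,\lambda):= J_l(\sqrt{\lambda}\,a)\,Y_l(\sqrt{\lambda}) - J_l(\sqrt{\lambda})\,Y_l(\sqrt{\lambda}\,a)
\]
in the Dirichlet case. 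Using these, I would first solve numerically the scalar equation $\mu_{0,2}(a)=\lambda_{4,0}(a)$, whose solvability is guaranteed by Proposition~\ref{P.cross1}, to obtain a sharp approximation of $a_4$.

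To verify \eqref{T}, the idea is to differentiate the characteristic equations implicitly, which yields the closed-form expressions $\mu_{0,2}'(a) = -\partial_a F_0 / \partial_\mu F_0$ and $\lambda_{4,0}'(a) = -\partial_a G_4 / \partial_\lambda G_4$, evaluated at $\mu = \mu_{0,2}(a)$ and $\lambda = \lambda_{4,0}(a)$ respectively. Both derivatives can be expressed entirely in terms of values of $J_l, Y_l$ and their first derivatives via standard Bessel recurrences, and the strict inequality $\mu_{0,2}'(a_4)\neq \lambda_{4,0}'(a_4)$ can then be certified by a direct numerical evaluation at the computed value of $a_4$.

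For the second assertion, one needs to locate $\mu_{0,2}(a_4)$ in the full Neumann spectrum of $\Omega_{a_4}$. Since $\mu_{l,n}(a)$ is strictly increasing in both $l$ and $n$, only finitely many pairs $(l,n)$ can give an eigenvalue strictly below $\mu_{0,2}(a_4)$; these can be identified a priori by combining the monotonicity with the crude Weyl-type bound $\mu_{l,n}(a_4)\geq l^2$ coming from the Rayleigh quotient, which restricts the search to a small box in $(l,n)$-space. I would then compute all relevant $\mu_{l,n}(a_4)$ by solving $F_l(a_4,\cdot)=0$ numerically and sort the resulting values with the correct multiplicities, namely $1$ for $l=0$ and $2$ for $l\geq 1$ (accounting for the $\cos l\theta$ and $\sin l\theta$ eigenfunctions). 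The identity $\mu_{0,2}(a_4)=\mu_{18}(\Omega_{a_4})$ will then follow directly from the enumeration.

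The main obstacle, common to both parts, is not conceptual but numerical: the argument is ultimately a concrete computation with explicit algebraic expressions in Bessel functions, and the only genuine care needed is to carry out the evaluations at sufficient precision, or with interval arithmetic, to ensure that the strict inequalities and orderings are rigorously certified rather than merely heuristically observed. Given the high numerical stability of Bessel function evaluations, this is a routine matter in practice.
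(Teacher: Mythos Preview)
Your proposal is correct and follows essentially the same strategy as the paper: both reduce the eigenvalue conditions to zeros of explicit $2\times 2$ Bessel determinants, compute $\mu_{0,2}'(a_4)$ and $\lambda_{4,0}'(a_4)$ by implicit differentiation of those characteristic equations, and verify the second claim by enumerating the finitely many $\mu_{l,n}(a_4)$ below $\mu_{0,2}(a_4)$ with the correct multiplicities. The only minor difference is that the paper shortens the enumeration using the structural facts $\mu_{0,2}=\lambda_{1,1}$ (Lemma~\ref{extra}) and $\mu_{0,2}=\lambda_{4,0}>\mu_{l,0}$ for $l\leq 4$, whereas you bound the search box via $\mu_{l,n}\geq l^2$ and compute everything directly; both routes lead to the same count.
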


\begin{proof}

Recall that $\mu\equiv \mu_{0,2}(a)$ is the value such that the following equation has a nontrivial solution, corresponding to the second  eigenfunction
\begin{equation*}
	\psi''+\frac{\psi'}r +\mu\, \psi=0\quad \text{in } (a,1)\,,\qquad \psi'(a)=\psi'(1)=0\,.
\end{equation*}
The function $\psi$ is the given by
$$ \psi(r)= \frac{J_0(\sqrt{\mu}r) Y_1(\sqrt{\mu}) - Y_0(\sqrt{\mu}r) J_1(\sqrt{\mu})}{Y_1(\sqrt{\mu})}\,.
$$
Here and in what follows, $Y_k$ denotes the Bessel function of the second kind of order $k$. Observe that $\psi$ changes sign twice in the interval $[a,1]$. Furthermore, $\psi'$ can be easily computed:
$$ \psi'(r)= -\sqrt{\mu}\, \frac{J_1(\sqrt{\mu}r) Y_1(\sqrt{\mu}) - Y_1(\sqrt{\mu}r) J_1(\sqrt{\mu})}{Y_1(\sqrt{\mu})}\,.$$
Thus, we will define, for any real $\mu$ and $r$,  
$$ \Psi(\mu, r):= \frac{J_1(\sqrt{\mu}r) Y_1(\sqrt{\mu}) - Y_1(\sqrt{\mu}r) J_1(\sqrt{\mu})}{Y_1(\sqrt{\mu})}\,.$$
Note that $\Psi(\mu_{0,2}(a), a) =0$. The chain rule then yields the following formula for $\mu_{0,2}'(a)$:
\begin{align*} \mu'_{0,2}& (a) = - \frac{\partial_r \Psi(\mu,r)}{\partial_{\mu} \Psi(\mu,r)}\Big|_{\mu=\mu_{0,2}(a), \, r=a} = \frac{P_{\mu}}{Q_{\mu}}\,,
\end{align*}
with
\begin{multline*}
P_{\mu} := \mu^{3/2} \pi Y_1(\sqrt{\mu}) \Big [ \ Y_1(\sqrt{\mu}) (J_2(\sqrt{\mu}a) - J_0(\sqrt{\mu}a)) \\
- J_1(\sqrt{\mu}) (Y_2(\sqrt{\mu}a) - Y_0(\sqrt{\mu}a))\Big]\,,
\end{multline*}
and 
\begin{multline*}
Q_{\mu} := 2  Y_1(\sqrt{\mu}a) + \pi Y_1(\sqrt{\mu}) \Big [ \ Y_1(\sqrt{\mu})(\sqrt{\mu} a  J_0(\sqrt{\mu}a)- J_1(\sqrt{\mu}a)) \\  - J_1(\sqrt{\mu})  (\sqrt{\mu} a Y_0(\sqrt{\mu}a) - Y_1(\sqrt{\mu}a))\,\Big ]\,.
\end{multline*}
Note that $\mu \equiv \mu_{0,2}(a)$ in the definitions of $P_{\mu}$ and $Q_{\mu}$. 
%\\  &=  \frac{\mu^{3/2} \pi Y_1(\sqrt{\mu}) \Big [Y_1(\sqrt{\mu}) (J_2(\sqrt{\mu}a) - J_0(\sqrt{\mu}a)) - J_1(\sqrt{\mu}) (Y_2(\sqrt{\mu}a) - Y_0(\sqrt{\mu}a))\Big]}{2  Y_1(\sqrt{\mu}a) + \pi Y_1(\sqrt{\mu}) \Big [Y_1(\sqrt{\mu})(\sqrt{\mu} a  J_0(\sqrt{\mu}a)- J_1(\sqrt{\mu}a)) - J_1(\sqrt{\mu})  (\sqrt{\mu} a Y_0(\sqrt{\mu}a) - Y_1(\sqrt{\mu}a)) \Big ]}\,,	
%\end{align*}
%where $\mu\equiv \mu_{0,2}(a)$ in the second line.

On the other hand, $\lambda\equiv  \lambda_{l,0}(a)$ is the value such that the following equation has a nontrivial solution corresponding to its principal eigenfunction:
\begin{equation*}
	\vp''+\frac{\vp'}r-\frac{l^2 \vp}{r^2}+\la\, \vp=0\quad \text{in } (a,1)\,,\qquad \vp(a)=\vp(1)=0\,.
\end{equation*}
The function $\vp$ is then given by
$$ \vp(r)= \frac{J_l(\sqrt{\la}r) Y_l(\sqrt{\la}) - Y_l(\sqrt{\la}r) J_l(\sqrt{\la})}{Y_l(\sqrt{\la})}\,.$$
As before, we then define, for arbitrary $r$ and $\la$, 
$$\Phi(\la,r):= \frac{J_l(\sqrt{\la}r) Y_l(\sqrt{\la}) - Y_l(\sqrt{\la}r) J_l(\sqrt{\la})}{Y_l(\sqrt{\la})}\,.$$
Again we have that $\Phi(a, \la_{l,0}(a))=0$, and hence we can compute $\la_{l,0}'(a)$ as:
\begin{align*} \la'_{l,0}&(a)= - \frac{\partial_r \Phi(\la,r)}{\partial_{\la} \Phi(\la,r)}\Big|_{\la=\la_{l,0}(a), \, r=a} = \frac{P_{\la}}{Q_{\la}}\,,
\end{align*}
with
\begin{multline*}
P_{\la} := \la^{3/2} \pi Y_l(\sqrt{\la}) \Big [Y_l(\sqrt{\la}) (J_{l+1}(\sqrt{\la}a) - J_{l-1}(\sqrt{\la}a)) \\ - J_l(\sqrt{\la}) (Y_{l+1}(\sqrt{\la}a) - Y_{l-1}(\sqrt{\la}a))\Big]\,,
\end{multline*}
and 
\begin{multline*}
Q_{\la}:= 2  Y_l(\sqrt{\la}a) + \pi Y_l(\sqrt{\la}) \Big [  Y_l(\sqrt{\la})(\sqrt{\la} a  J_{l-1}(\sqrt{\la}a)- l J_l(\sqrt{\la}a))\\
- J_l(\sqrt{\la})  (\sqrt{\la} a Y_{l-1}(\sqrt{\la}a) -l  Y_l(\sqrt{\la}a)) \Big ] 
\end{multline*}
%\\	&= \frac{\la^{3/2} \pi Y_l(\sqrt{\la}) \Big [Y_l(\sqrt{\la}) (J_{l+1}(\sqrt{\la}a) - J_{l-1}(\sqrt{\la}a)) - J_l(\sqrt{\la}) (Y_{l+1}(\sqrt{\la}a) - Y_{l-1}(\sqrt{\la}a))\Big]}{2  Y_l(\sqrt{\la}a) + \pi Y_l(\sqrt{\la}) \Big [Y_l(\sqrt{\la})(\sqrt{\la} a  J_{l-1}(\sqrt{\la}a)- l J_l(\sqrt{\la}a)) - J_l(\sqrt{\la})  (\sqrt{\la} a Y_{l-1}(\sqrt{\la}a) -l  Y_l(\sqrt{\la}a)) \Big ] }\,.
%\end{align*}
Note that  $\la\equiv \lambda_{l,0}(a)$ in the definitions of $P_{\la}$ and $Q_{\la}$.
%In the second line,. 

The value $a=a_l$ is obtained by imposing $\mu_{0,2}(a)= \la_{l,0}(a)$, that is, by solving the system
$$ \Psi(a, \mu)=0\,, \qquad  \Phi(a, \la)=0\,, \qquad \mu = \la\,.$$
In case $l=4$, we have used Mathematica to numerically obtain
\begin{equation} \label{values} a= 0.140989\dots\,, \qquad \la= \mu = 57.5851\dots\,. \end{equation}
We are not giving a computed-assisted proof of this, but it is worth noting that the fact that these values do correspond to the correct roots of $\psi'$ and $\vp$ is clear from the plot given in Figure 2.

%\begin{figure}[h]
%	\centering 
%	\begin{minipage}[c]{120mm}
%		\centering
%		\resizebox{120mm}{80mm}{\includegraphics{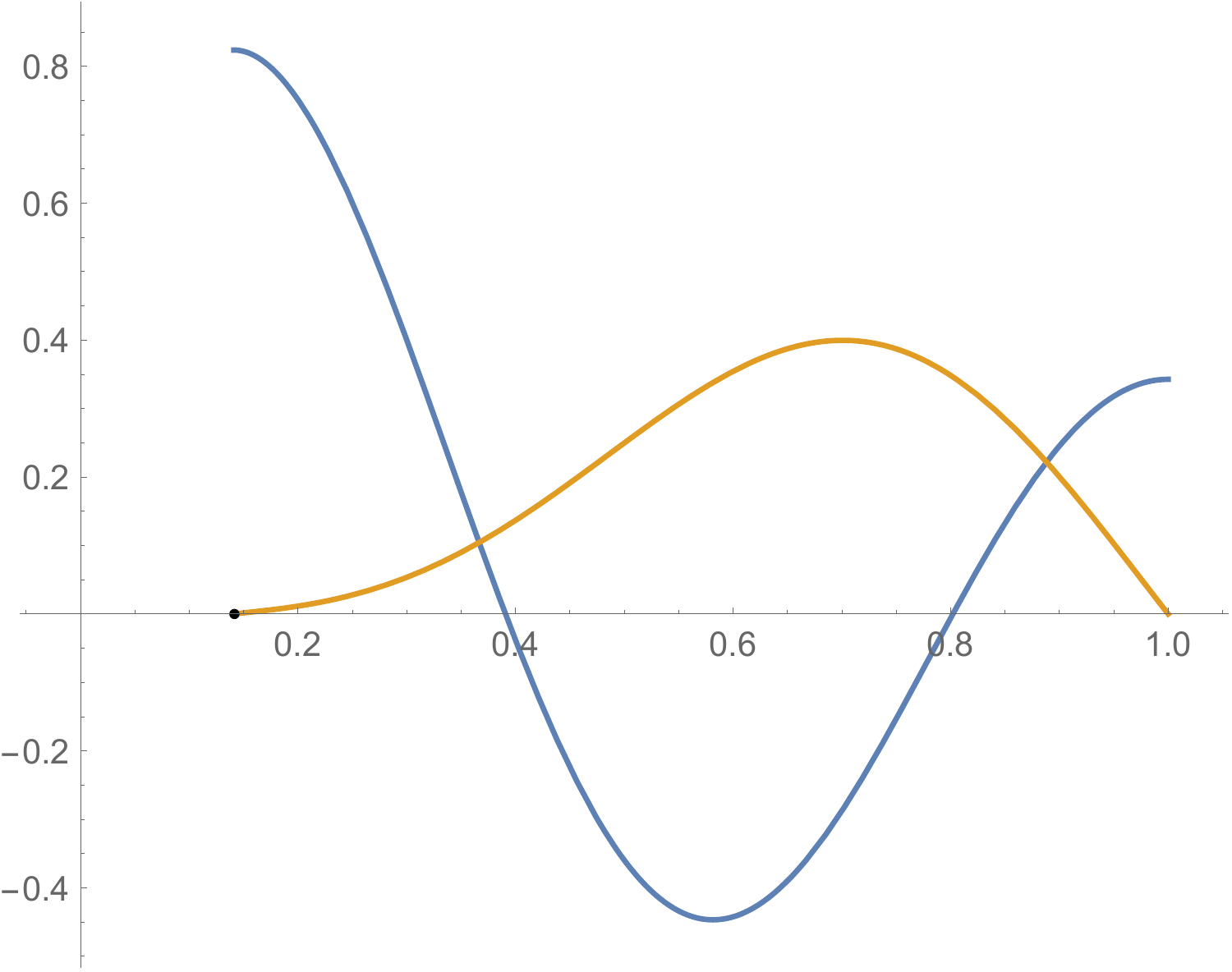}}
%	\end{minipage}
%	\caption{The plot of $\Psi(r, \mu)$ and $\Phi(r,\la)$ for $a$, $\mu$ and $\la$ as in \eqref{values}. Here $r$ runs from $a$ to $1$. }
%\end{figure}

\begin{figure}[h] 
	\centering 
	\begin{minipage}[c]{105mm}
		\centering
		\resizebox{105mm}{70mm}{\includegraphics{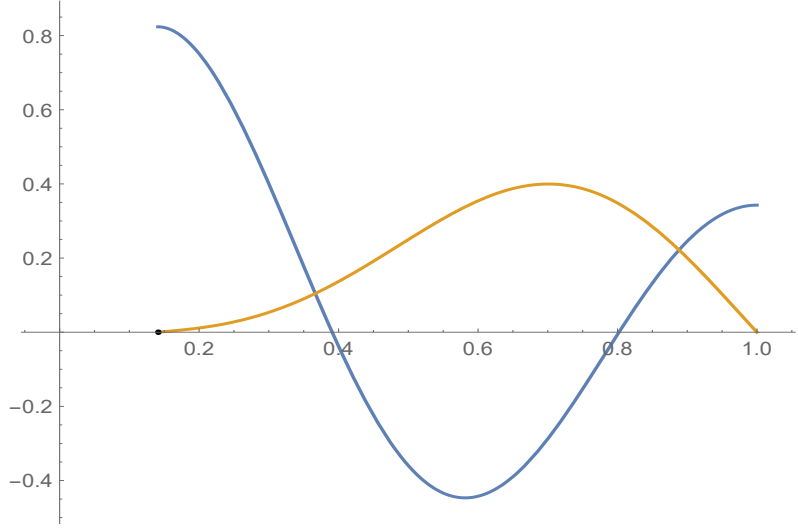}}
	\end{minipage}
	\caption{The plot of the eigenfunctions $\psi(r)$ and $\vp(r)$ for $l=4$ and $a$, $\mu$, $\la$ as in \eqref{values}. Here $r$ runs from the point $a$ to $1$. }
\end{figure}

Plugging \eqref{values} (together with $l=4$) in the expressions of $\mu_{0,2}'(a)$ and $\lambda_{l,0}'(a)$ we obtain:
$$ \mu_{0,2}'(a)= 105.971\dots, \qquad \la_{4,0}'(a)= 0.12067\dots$$
The transversality condition \eqref{T} is clearly satisfied.

We now see (numerically) that in the case \eqref{values}, indeed $\mu=\la$ corresponds to the eigenvalue $\mu(18)$, with $\mu(k):=\mu_k(\Om_{a_4})$. We need to compare the value $\mu$ given in \eqref{values} with the other eigenvalues $\mu_{l,k}(a)$. 

First, note that $\mu\equiv  \mu_{0,2} > \mu_{0, i}$ for $i=0$ or $i=1$. Moreover, by Lemma \ref{extra}, $\mu_{0,2}= \la_{1,1} > \mu_{1,1}$. On the other hand, with our choice of $a$, $\mu_{0,2}= \lambda_{4,0} > \mu_{i,0}$ for $i \leq 4$. Next, we need to compute $\mu_{l,0}(a)$ with $l \geq 5$, and also $\mu_{1,l}(a)$ with $l \geq 2$, where $a$ is given in \eqref{values}. For that, we solve
\begin{equation}
	\psi''+\frac{\psi'}r - l^2 \frac{\psi}{r^2}+ \bar{\mu}\, \psi=0\quad \text{in } (a,1)\,,\qquad \psi'(1)=0\,,
\end{equation}
where $\bar{\mu}$ is real parameter. The function $\psi$ is given by
$$
\begin{aligned}
 \psi(r)= &\ \frac{J_l(\sqrt{\bar{\mu}}r) Y_{l-1}(\sqrt{\bar{\mu}}) - Y_l(\sqrt{\bar{\mu}}r) J_{l-1}(\sqrt{\bar{\mu}})}{Y_{l-1}(\sqrt{\bar{\mu}}) - Y_{l+1}(\sqrt{\bar{\mu}})} \\
 & -\frac{J_l(\sqrt{\bar{\mu}}r) Y_{l+1}(\sqrt{\bar{\mu}}) - Y_l(\sqrt{\bar{\mu}}r) J_{l+1}(\sqrt{\bar{\mu}})}{Y_{l-1}(\sqrt{\bar{\mu}}) - Y_{l+1}(\sqrt{\bar{\mu}})} \,.
\end{aligned}
 $$
The eigenvalues $\mu_{l,i}$ are obtained as the values of $\bar{\mu}$ for which $\psi'(a)=0\,.$ The index $i$ is determined by the number of times that the corresponding eigenfunction changes sign. Using Mathematica, it is easy to find
\begin{align*}
& \mu_{5,0}=41.1601\dots, \qquad  \mu_{6,0}= 56.2689\dots, \qquad  \mu_{7,0}= 73.5792\dots\,,
\end{align*}
and
$$ \mu_{1,2}= 44.0466\dots, \quad  \mu_{1,3}= 64.1201\dots\,.
$$

Therefore, $\mu_{l,0} < \mu_{0,2}$ if and only if $l \leq 6$, and $\mu_{1,l} \leq \mu_{0,2}$ if and only if $l \leq 2$. As the  eigenvalues $\mu_{0,0}$, $\mu_{0,1}$ have multiplicity 1  (because they correspond to radial eigenfunctions) and the rest have multiplicity 2 (since the corresponding eigenspace is spanned by the radial function multiplied by $\sin(l \theta)$ and $\cos(l \theta)$), we conclude that $\mu_{0,2}= \mu_{18}(\Om_{a_4})$. Here we have used that, of course $0=\mu_{0,0}= \mu_0(\Om)$.
\end{proof}

\section{The pulled back problem} \label{A.PullBack}

In this second Appendix we elaborate on the expression of the operator $L_a^{b,B}$ given in Section \ref{S.deform}. First of all, let us recall that in \eqref{E.defPhi} we have set 
$$
\Phi_a^{b,B}(R,\te) = (r(R,\te),\te)\,,
$$
with
\begin{equation} \label{B.r}
r(R,\te) := a+(1-a +B(\te))(2R-1)+2(1-R)b(\te)\,.
\end{equation}
Here, $a \in (0,1)$ and $b ,B \in C^{2,\al}(\TT)$ satisfy 
\[
\|b\|_{L^\infty(\TT)}+\|B\|_{L^\infty(\TT)}<\min\Big\{a,\tfrac{1-a}{2}\Big\}\,.
\]
Then, note that we can as well write
$$
R(r,\te) = \frac{r}{2(1-a+B(\te)-b(\te))} + \frac{1+B(\te)-2(a+b(\te))}{2(1-a+B(\te)-b(\te)}\,,
$$
and so that
\begin{align*}
\pd_r R(r,\te) & = \frac{1}{2(1-a+B(\te)-b(\te))} \,,\\
\pd_\te R(r,\te) & = \frac{B'(\te)(a-r+b(\te))+b'(\te)(r-1-B(\te))}{2(1-a+B(\te)-b(\te))^2}\,, \\
\pd_\te^2 R(r,\te) & = \frac{B''(\te)(a-r+b(\te))+b''(\te)(r-1-B(\te))}{2(1-a+B(\te)-b(\te))^2} \\
& \quad  - \frac{(B'(\te)-b'(\te))(B'(\te)(a-r+b(\te))+b'(\te)(r-1-B(\te)))}{(1-a+B(\te)-b(\te))^3}\,.
\end{align*}
Using that
\begin{align*}
a-r+b(\te) &= (2R-1)(1-a+B(\te)-b(\te)) \,,\\
r-1-B(\te) & = 2(1-R)(1-a+B(\te)-b(\te)) \,, 
\end{align*}
we then infer that
\begin{equation} \label{B.der}
\begin{aligned}
\pd_r R(r(R,\te),\te) & = \frac{1}{2(1-a+B(\te)-b(\te))} \,,\\
\pd_\te R(r(R,\te),\te) & = -\frac{B'(\theta)(2R-1) + 2(1-R)b'(\theta)}{2(1-a+B(\theta)-b(\theta))}\,, \\
\pd_\te^2 R(r(R,\te),\te) & =  \frac{(B'(\theta)-b'(\theta))^2(2R-1)+b'(\theta)(B'(\theta)-b'(\theta))}{(1-a+B(\theta)- b(\theta))^2} \\ 
	& \quad - \frac{B''(\theta)(2R-1) + 2(1-R)b''(\theta)}{2(1-a+B(\theta)-b(\theta))}\,.
\end{aligned}
\end{equation}

Now, for $u \in C^{2,\al}(\overline{\Om}_a^{\,b,B})$, we set 
$$
v := u \circ \Phi_a^{b,B} \in C^{2,\al}(\overline{\Om}_\frac12)\,,
$$
and get that
\begin{align*}
\pd_r^2 & u(r(R,\te),\te) + \frac{1}{r}\pd_r u(r(R,\te),\te) + \frac{1}{r^2} \pd_\te^2 u(r(R,\te),\te) + \mu_{0,2}(a) u(r(R,\te),\te) \\
& =  (\pd_r R)^2 \pd_R^2 v(R,\te) + \frac1{r(R,\te)} (\pd_r R) \pd_R v(R,\te)\\
& \quad + \frac{1}{r(R,\te)^2} \Big( \pd_\te^2 v(R,\te) + (\pd_\te R)^2 \pd_R^2 v(R,\te) \\
& \hspace{2.5cm} + 2 (\pd_\te R) \pd_{R\te} v(R,\te) + (\pd_\te^2 R) \pd_R v(R,\te) \Big) + \mu_{0,2}(a) v(R,\te)\,.
\end{align*}
By substituting \eqref{B.r} and \eqref{B.der} into this expression we get the desired formula for $L_a^{b,B}$. 
 
\bibliographystyle{amsplain}

\end{document}